\newcounter{margcount}
\newcommand{\xupref}[2]{\hspace{-0.3ex}\stackrel{\eqref{#1}}{#2}} % for (x)
\newtheorem{theorem}{Theorem}[section]
\newtheorem{lemma}[theorem]{Lemma}
\newtheorem{proposition}[theorem]{Proposition}
\newtheorem{definition}[theorem]{Definition}
\newtheorem{remark}[theorem]{Remark}
\numberwithin{equation}{section}
\newcommand{\e}{\varepsilon}
\newcommand{\vphi}{\varphi}
\newcommand{\N}{\mathbb N}
\newcommand{\Z}{\mathbb Z}
\newcommand{\R}{\mathbb R}
\newcommand{\de}{\,\mathrm{d}}
\newcommand{\supp}{{\rm supp\,}}
\newcommand{\measg}{\mathcal{M}_+(\R)}
\newcommand{\measgbeta}{\mathcal{M}_+(\R; 1 + 2^{(\beta+1) x})}
\newcommand{\cc}{C_{\mathrm c}}
\newcommand{\coag}{\mathscr{C}}
\newcommand{\frag}{\mathscr{F}}
\title{Solutions with peaks for a coagulation-fragmentation equation. Part II: aggregation in peaks}
\author{Marco Bonacini\thanks{\emailmarco} \and Barbara Niethammer\thanks{\emailbarbara} \and Juan J. L. Vel\'{a}zquez\thanks{\emailjuan}}
\date{${}^*$\addmarco \\ ${}^\dag$$^\ddag$\uniadd \\[3ex]\today}
\newcommand{\email}[1]{E-mail: \tt #1}
\newcommand{\emailmarco}{\email{marco.bonacini@unitn.it}}
\newcommand{\emailbarbara}{\email{niethammer@iam.uni-bonn.de}}
\newcommand{\emailjuan}{\email{velazquez@iam.uni-bonn.de}}
\newcommand{\addmarco}{\emph{\small Department of Mathematics, University of Trento\\ Via Sommarive 14, 38123 Povo (TN), Italy}}
\newcommand{\uniadd}{\emph{\small University of Bonn, Institute for Applied Mathematics\\ Endenicher Allee 60, 53115 Bonn, Germany}}
\begin{document}

\maketitle

\begin{abstract}
The aim of this two-part paper is to investigate the stability properties of a special class of solutions to a coagulation-fragmentation equation. We assume
that the coagulation kernel is close to the diagonal kernel, and that the fragmentation kernel is diagonal. In a companion paper we constructed a two-parameter family of stationary solutions concentrated in Dirac masses, and we carefully studied the asymptotic decay of the tails of these solutions, showing that this behaviour is stable. In this paper we prove that for initial data which are sufficiently concentrated, the corresponding solutions approach one of these stationary solutions for large times.
\end{abstract}

%\tableofcontents

%%%%%%%%%%%%%%%%%%%%%%%%%%%%%%%%%%%%%%%%%%%%%%%%%%%%%%%%%%%%%%%%%%
%%%%%%%%%%%%%%%%%%%%%%%%%%%%%%%%%%%%%%%%%%%%%%%%%%%%%%%%%%%%%%%%%%
%%%%%%%%%%%%%%%%%%%%%%%%%%%%%%%%%%%%%%%%%%%%%%%%%%%%%%%%%%%%%%%%%%
%%%%%%%%%%%%%%%%%%%%%%%%%%%%%%%%%%%%%%%%%%%%%%%%%%%%%%%%%%%%%%%%%%
%%%%%%%%%%%%%%%%%%%%%%%%%%%%%%%%%%%%%%%%%%%%%%%%%%%%%%%%%%%%%%%%%%
%%%%%%%%%%%%%%%%%%%%%%%%%%%%%%%%%%%%%%%%%%%%%%%%%%%%%%%%%%%%%%%%%%
%%%%%%%%%%%%%%%%%%%%%%%%%%%%%%%%%%%%%%%%%%%%%%%%%%%%%%%%%%%%%%%%%%
%%%%%%%%%%%%%%%%%%%%%%%%%%%%%%%%%%%%%%%%%%%%%%%%%%%%%%%%%%%%%%%%%%
%%%%%%%%%%%%%%%%%%%%%%%%%%%%%%%%%%%%%%%%%%%%%%%%%%%%%%%%%%%%%%%%%%
%%%%%%%%%%%%%%%%%%%%%%%%%%%%%%%%%%%%%%%%%%%%%%%%%%%%%%%%%%%%%%%%%%

\section{Introduction} \label{sect:intro}

The aim of this two-part paper is to investigate the stability properties of a special class of solutions to a coagulation-fragmentation model.
We consider the evolution equation
\begin{equation} \label{eq:coagfrag}
\begin{split}
\partial_t f(\xi,t) = \coag[f](\xi,t) + \frag[f](\xi,t)\,,
\end{split}
\end{equation}
where the coagulation operator and the fragmentation operator are respectively defined as
\begin{equation} \label{coag}
\coag[f](\xi,t) := \frac12\int_0^{\xi} K(\xi-\eta,\eta)f(\xi-\eta,t)f(\eta,t)\de \eta - \int_0^\infty K(\xi,\eta)f(\xi,t)f(\eta,t)\de \eta\,,
\end{equation}
\begin{equation} \label{frag}
\frag[f](\xi,t) := \int_0^\infty \Gamma(\xi+\eta,\eta)f(\xi+\eta,t)\de\eta - \frac12\int_0^\xi \Gamma(\xi,\eta)f(\xi,t)\de\eta\,.
\end{equation}
We consider a coagulation kernel $K$ compactly supported around the diagonal $\{\xi=\eta\}$, and a diagonal fragmentation kernel $\Gamma$ (see Section~\ref{sect:setting} for the precise assumptions).
In a companion paper \cite{BNVd} we started the investigation of the stability properties of a family of stationary solutions to \eqref{eq:coagfrag} with peaks
concentrated in Dirac masses; the goal of this paper is to show for a class of initial data such that the corresponding solutions to the evolution equation \eqref{eq:coagfrag} 
converge for large times to one of these stationary solutions.

The coagulation-fragmentation with such kernels is thus an example of nonuniqueness of stationary solutions with given mass. In addition, even though a detailed balance 
condition is satisfied, one cannot exploit a corresponding entropy as in \cite{LM03,Can07}. Another motivation for our study is that for the pure coagulation equation 
with kernels that concentrate near the diagonal there is evidence of evolution into time-periodic peak solutions \cite{HNV16} and we expect that the techniques
developed here will also be useful for a corresponding study. 
We refer to the introduction of \cite{BNVd} for more detailed motivations, bibliographical references and related questions, and to \cite{BLL19a,BLL19b} for 
general background on coagulation-fragmentation equations. We pass now to describe the main result proved in this paper.

It is shown in \cite{BNVd} (see also Proposition~\ref{prop:stationary} below) that, given any value of the total mass $M>0$ and a shifting parameter $\rho\in[0,1)$, there exists a stationary (measure) solution to \eqref{eq:coagfrag} in the form
\begin{equation} \label{intro1}
f_p(\xi;M,\rho) = \sum_{n=-\infty}^\infty f_n(M,\rho)\delta(\xi-2^{n+\rho}),
\end{equation}
with total mass
\begin{equation} \label{intro2}
\int_0^\infty \xi f_p(\xi;M,\rho) \de\xi = \sum_{n=-\infty}^\infty 2^{n+\rho}f_n(M,\rho) = M.
\end{equation}
The measure $f_p(\cdot\,;M,\rho)$ is concentrated in the discrete set $\{2^{n+\rho}\}_{n\in\Z}$.
In the main result of this paper (Theorem~\ref{thm:stability}) we show that, for a class of initial data compactly supported around the points $\{2^{n}\}_{n\in\Z}$, a solution to \eqref{eq:coagfrag} converges as $t\to\infty$ to one of the discrete measures $f_p(\cdot\,;M,\rho)$, for some $\rho\in[0,1)$, with the same mass $M$ of the initial datum.

The class of initial data for which this stability result holds is determined in terms of the asymptotic behaviour as $\xi\to\infty$. We consider an initial datum $f_0\in\mathcal{M}^+(0,\infty)$ with total mass $M=\int_0^\infty \xi f_0(\xi)\de\xi$ such that
\begin{equation} \label{intro3}
\supp f_0 \subset \bigcup_{n\in\Z}(2^{n-\delta_0},2^{n+\delta_0})
\end{equation}
for some $\delta_0>0$ sufficiently small. Secondly, by introducing the quantities 
\begin{equation}  \label{intro5}
m_n(0) := \int_{(2^{n-\delta_0},2^{n+\delta_0})} f_0(\xi)\de\xi
\end{equation}
representing the number of particles located around the point $2^n$, we assume that the sequence $\{m_n(0)\}_{n\in\Z}$ is a small perturbation of the coefficients of one of the stationary states \eqref{intro1} (not necessarily the one with the same mass), in the sense that
\begin{equation} \label{intro4}
m_n(0) = (1+\e_n^0) f_n(M^0,\rho^0) , \qquad n\in\Z,
\end{equation}
for some $M^0>0$, $\rho^0>0$, and for a sequence $|\e_n^0|\leq\delta_0$. Then our main result can be stated in the following form: there exists $\delta_0>0$ small enough, depending ultimately only on the total mass $M>0$ of the initial datum, such that if $f_0$ satisfies \eqref{intro3}--\eqref{intro4} then there exists a (weak) solution to \eqref{eq:coagfrag} with initial datum $f_0$ which converges, as $t\to\infty$, to a stationary solution $f_p(\cdot\,;M,\rho)$, where $M$ is the mass of $f_0$ and $\rho\in[0,1)$.

This stability property results from the combination of two main effects. We introduce the first and second moments around each peak of the solution $f(\xi,t)$ at time $t$:
\begin{equation*}
\begin{split}
p_n(t) &:= \frac{1}{m_n(t)}\int_{2^{n-\delta_0}}^{2^{n+\delta_0}} \frac{\ln(\xi/2^n)}{\ln2}f(\xi,t)\de\xi,\\
q_n(t) &:= \frac{1}{m_n(t)}\int_{2^{n-\delta_0}}^{2^{n+\delta_0}} \Bigl( \frac{\ln(\xi/2^n)}{\ln 2}-p_n(t) \Bigr)^2 f(\xi,t)\de\xi
\end{split}
\end{equation*}
(where $m_n(t)$ is defined as in \eqref{intro5} with $f_0$ replaced by $f(\cdot,t)$). Notice that, for a solution concentrated in peaks in the form \eqref{intro1}, one has $p_n=\rho$, $q_n=0$ for all $n\in\Z$. We show that all the first moments $p_n(t)$ tend to align, as $t\to\infty$, to a common value $\rho\in[0,1)$, which describes the asymptotic position of the peaks. Furthermore, the second moments $q_n(t)$ converge exponentially to 0 as $t\to\infty$, for every $n\in\Z$: this yields concentration in peaks.

This asymptotic behaviour of the functions $p_n(t)$, $q_n(t)$ can be obtained by a careful analysis of the corresponding evolution equations. In turn, this relies on a representation of the functions $m_n(t)$, at each time, as a perturbation of a stationary state: indeed by a fixed point argument we show that an identity in the form \eqref{intro4} holds for every positive time $t$, with $M^0$ replaced by a value $M(t)$ which converges to the mass $M$ of the solution as $t\to\infty$. We refer to the beginning of Section~\ref{sect:strategy} for a more detailed discussion of the general strategy of the proof.

We conclude by noting that the result obtained in \cite{BNVd} can be seen as a particular case of the stability theorem proved in this paper: it corresponds to the case in which the initial datum is already supported at the points $\{2^{n+\rho}\}_{n\in\Z}$, and therefore all the shifting coefficients $p_n(t)$ are constant and equal to $\rho$, and all the variances $q_n(t)$ vanish identically. Hence the proof in \cite{BNVd} can be used also as a guide to get an insight of the main strategy, which is here technically more involved due to the presence of a dispersion around the peaks and of a not uniform shifting.

\medskip
\noindent\textbf{Structure of the paper.} The paper is organized as follows. In Section~\ref{sect:setting} we formulate the precise assumptions on the coagulation and fragmentation kernels, we recall from \cite{BNVd} the construction of the family of stationary solutions in the form \eqref{intro1}, and we state the main result of the paper. In Section~\ref{sect:strategy} we discuss the strategy of the proof and we introduce some auxiliary results. In Section~\ref{sect:linear} we state the regularity result on the linearized equation, whose proof is postponed to Appendix~\ref{sect:appendix}. Finally, in Section~\ref{sect:proof} we give the proof of the main result of this paper.

%%%%%%%%%%%%%%%%%%%%%%%%%%%%%%%%%%%%%%%%%%%%%%%%%%%%%%%%%%%%%%%%%%
%%%%%%%%%%%%%%%%%%%%%%%%%%%%%%%%%%%%%%%%%%%%%%%%%%%%%%%%%%%%%%%%%%
%%%%%%%%%%%%%%%%%%%%%%%%%%%%%%%%%%%%%%%%%%%%%%%%%%%%%%%%%%%%%%%%%%
%%%%%%%%%%%%%%%%%%%%%%%%%%%%%%%%%%%%%%%%%%%%%%%%%%%%%%%%%%%%%%%%%%
%%%%%%%%%%%%%%%%%%%%%%%%%%%%%%%%%%%%%%%%%%%%%%%%%%%%%%%%%%%%%%%%%%
%%%%%%%%%%%%%%%%%%%%%%%%%%%%%%%%%%%%%%%%%%%%%%%%%%%%%%%%%%%%%%%%%%
%%%%%%%%%%%%%%%%%%%%%%%%%%%%%%%%%%%%%%%%%%%%%%%%%%%%%%%%%%%%%%%%%%
%%%%%%%%%%%%%%%%%%%%%%%%%%%%%%%%%%%%%%%%%%%%%%%%%%%%%%%%%%%%%%%%%%
%%%%%%%%%%%%%%%%%%%%%%%%%%%%%%%%%%%%%%%%%%%%%%%%%%%%%%%%%%%%%%%%%%
%%%%%%%%%%%%%%%%%%%%%%%%%%%%%%%%%%%%%%%%%%%%%%%%%%%%%%%%%%%%%%%%%%

\section{Setting and main result} \label{sect:setting}

\subsection{Assumptions on the kernels} \label{subsect:kernel}
We assume the following: the coagulation kernel is supported near the diagonal and has the form
\begin{equation} \label{kernel1}
K(\xi,\eta) = \frac{1}{\xi+\eta}k\Bigl(\frac{\xi+\eta}{2}\Bigr) Q\Bigl(\frac{2\eta}{\xi+\eta}-1\Bigr)\,,
\end{equation}
where $k\in C^2((0,\infty))$, $k>0$, satisfies the growth conditions
\begin{align}
\qquad\qquad\qquad&k(\xi)\sim\xi^{\alpha+1} & &\text{as }\xi\to\infty, \quad\alpha\in(0,1), & \qquad\qquad\qquad \label{kernel2}\\
\qquad\qquad\qquad&k(\xi)= k_0 + O(\xi^{\bar{\alpha}}) & &\text{as }\xi\to0^+, \quad\bar{\alpha}>1, & \qquad\qquad\qquad \label{kernel2bis}
\end{align}
\begin{align}
|k'(\xi)|&\leq k_1\xi^\alpha \quad\text{for $\xi\geq1$,} \qquad |k'(\xi)|\leq k_1\xi^{\bar{\alpha}-1} \quad\text{for $\xi\leq1$,} \label{kernel2ter} \\
|k''(\xi)|&\leq k_2\xi^{\alpha-1} \quad\text{for $\xi\geq1$,} \qquad |k''(\xi)|\leq k_2\xi^{\bar{\alpha}-2} \quad\text{for $\xi\leq1$,} \label{kernel2quater}
\end{align}
for some $k_0,k_1,k_2>0$, and $Q$ is a cut-off function such that
\begin{equation} \label{kernel3}
Q\in C^2(\R), \quad Q\geq0, \quad Q(0)=1, \quad \supp Q\subset\bigl(-{\textstyle\frac13},{\textstyle\frac13}\bigr), \quad Q(\xi)=Q(-\xi).
\end{equation}
The kernel $K$ has been written in the form \eqref{kernel1} to emphasize that it is close to the diagonal kernel in the sense of measures. The condition on the support of $Q$ guarantees that, for solutions concentrated in Dirac masses at points $\{2^n\}_{n\in\Z}$, the different peaks do not interact with each other; in particular
\begin{equation} \label{suppK0}
\supp K(\xi,\eta) \subset \Bigl\{ \frac12\xi < \eta < 2\xi \Bigr\}.
\end{equation}

As for the fragmentation kernel $\Gamma(\xi,\eta)$, we assume that
\begin{equation} \label{kernel4}
\Gamma(\xi,\eta) = \gamma(\xi)\delta(\xi-2\eta)\,,
\end{equation}
where $\gamma\in C^2((0,\infty))$, $\gamma(\xi)>0$, satisfies the growth conditions
\begin{align}
\qquad\qquad\qquad&\gamma(\xi)=\xi^{\beta} + O(\xi^{\tilde{\beta}}) & &\text{as }\xi\to\infty, \quad\beta\in(1,2),\; \tilde{\beta}<\beta & \qquad\qquad\qquad \label{kernel5}\\
\qquad\qquad\qquad&\gamma(\xi)= \gamma_0 + O(\xi^{\bar{\beta}}) & &\text{as }\xi\to0^+, \quad\bar{\beta}>1, & \qquad\qquad\qquad \label{kernel5bis}
\end{align}
\begin{align}
|\gamma'(\xi)|&\leq \gamma_1\xi^{\beta-1} \quad\text{for $\xi\geq1$,} \qquad |\gamma'(\xi)|\leq \gamma_1\xi^{\bar{\beta}-1} \quad\text{for $\xi\leq1$,} \label{kernel5ter} \\
|\gamma''(\xi)|&\leq \gamma_2\xi^{\beta-2} \quad\text{for $\xi\geq1$,} \qquad |\gamma''(\xi)|\leq \gamma_2\xi^{\bar{\beta}-2} \quad\text{for $\xi\leq1$,} \label{kernel5quater}
\end{align}
for some $\gamma_0,\gamma_1,\gamma_2>0$.

%%%%%%%%%%%%%%%%%%%%%%%%%%%%%%%%%%%%%%%%%%%%%%%%%%%%%%%%%%%%%%%%%%
%%%%%%%%%%%%%%%%%%%%%%%%%%%%%%%%%%%%%%%%%%%%%%%%%%%%%%%%%%%%%%%%%%

\subsection{Logarithmic variables} \label{sect:variables}
It is convenient to go over to logarithmic variables, which will be used along the rest of the paper: we set
\begin{equation}\label{variables}
g(x,t) := \xi f(\xi,t), \qquad \xi=2^{x}.
\end{equation}
After an elementary change of variables, \eqref{eq:coagfrag} takes the form
\begin{equation} \label{eq:coagfrag2}
\partial_t g(x,t) = \coag[g](x,t) + \frag[g](x,t)\,,
\end{equation}
where the coagulation operator $\coag$ and the fragmentation operator $\frag$ are now given by
\begin{equation} \label{coag2}
\begin{split}
\coag[g](x,t) &:= \frac{\ln2}{2}\int_{-\infty}^{x} \frac{2^xK(2^x-2^y,2^y)}{2^x-2^y} g\Bigl(\frac{\ln(2^x-2^y)}{\ln2},t\Bigr)g(y,t)\de y \\
&\qquad - \ln2\int_{-\infty}^\infty K(2^x,2^y)g(x,t)g(y,t)\de y\,,
\end{split}
\end{equation}
\begin{equation} \label{frag2}
\begin{split}
\frag[g](x,t) &:= \ln2\int_{-\infty}^\infty \frac{2^{x+y}\Gamma(2^x+2^y,2^y)}{2^x+2^y} g\Bigl(\frac{\ln(2^x+2^y)}{\ln2},t\Bigr)\de y \\
&\qquad - \frac{\ln2}{2}\int_{-\infty}^x \Gamma(2^x,2^y)g(x,t)2^y\de y 
\end{split}
\end{equation}
and mass conservation is expressed by 
\begin{equation} \label{mass}
\int_{\R} 2^x g(x,t)\de x = \int_{\R} 2^xg(x,0)\de x \qquad\text{for all }t>0.
\end{equation}

%%%%%%%%%%%%%%%%%%%%%%%%%%%%%%%%%%%%%%%%%%%%%%%%%%%%%%%%%%%%%%%%%%
%%%%%%%%%%%%%%%%%%%%%%%%%%%%%%%%%%%%%%%%%%%%%%%%%%%%%%%%%%%%%%%%%%

\subsection{Weak formulation and well-posedness} \label{subsect:weaksol}
In \cite{BNVd} we introduced the following notion of weak solution in the space of positive Radon measures $g\in\measg$, that allows to consider solutions to \eqref{eq:coagfrag2} concentrated in Dirac masses. In the following, with abuse of notation, we denote by $\int_A\phi(x)g(x)\de x$ the integral of $\phi$ on $A\subset\R$ with respect to the measure $g$, also in the case that $g$ is not absolutely continuous with respect to the Lebesgue measure.

\begin{definition}[Weak solution] \label{def:weakg}
A map $g\in C([0,T];\measgbeta)$ is a \emph{weak solution} to \eqref{eq:coagfrag2} in $[0,T]$ with initial condition $g_0\in\measg$ if for every $t\in[0,T]$
\begin{equation} \label{weakg}
\begin{split}
\partial_t\biggl(\int_{\R} &g(x,t)\vphi(x)\de x \biggr) \\
& = \frac{\ln2}{2}\int_{\R}\int_{\R} K(2^y,2^z)g(y,t)g(z,t)\biggl[\vphi\Bigl(\frac{\ln(2^y+2^z)}{\ln2}\Bigr) - \vphi(y) - \vphi(z) \biggr]\de y \de z \\
& \qquad - \frac14\int_{\R} \gamma(2^{y+1}) g(y+1) \bigl[\vphi(y+1) - 2\vphi(y) \bigr]\de y
% & \qquad - \int_{\R}\int_{\R} \frac{\gamma(2^{y+1})}{4}g(y+1)\delta(y-z) \biggl[\vphi\Bigl(\frac{\ln(2^y+2^z)}{\ln2}\Bigr) - \vphi(y) - \vphi(z) \biggr]\de y \de z
\end{split}
\end{equation}
for every test function $\vphi\in C(\R)$ such that $\lim_{x\to-\infty}\vphi(x)<\infty$ and $\vphi(x)\lesssim 2^x$ as $x\to\infty$, and $g(\cdot,0)=g_0$.
\end{definition}

Notice that, in order for the fragmentation term on the right-hand side of \eqref{weakg} to be well-defined for test functions with exponential growth at infinity, 
in view of the growth assumption \eqref{kernel5} we require in the definition the finiteness of the integral $\int_{\R}2^{(1+\beta)x}g(x)\de x$. In particular,
as the test function $\vphi(x)=2^x$ is admissible, this notion of weak solution guarantees the mass conservation property \eqref{mass}.

The existence of a (global in time) weak solution for a suitable class of initial data is proved in \cite{BNVd}. 
We report the statement here for the reader's convenience.

\begin{theorem}[Existence of weak solutions] \label{thm:wp}
Suppose that $g_0\in\measg$ satisfies
\begin{equation} \label{moment0}
\|g_0\| :=  \sup_{\substack{n\in\Z\\n< 0}} \, \frac{1}{2^n}\int_{[n,n+1)}g_0(x)\de x + \int_{[0,\infty)} g_0(x)\de x <\infty, \qquad \int_{\R} 2^{\theta x} g_0(x)\de x <\infty
\end{equation}
for some $\theta>\beta+1$. Then there exists a global weak solution $g$ to \eqref{eq:coagfrag2} with initial datum $g_0$, according to Definition~\ref{def:weakg}, which satisfies for all $T>0$
\begin{equation} \label{wpestg}
\sup_{0\leq t\leq T}\|g(\cdot,t)\| \leq C(T, g_0), \qquad
\sup_{0\leq t\leq T}\int_{\R} 2^{\theta x} g(x,t)\de x \leq C(T, g_0),
\end{equation}
where $C(T, g_0)$ denotes a constant depending on $T$, $g_0$, and on the properties of the kernels.
\end{theorem}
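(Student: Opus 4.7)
\emph{Step 1 (Regularization and mass).} The theorem is cited from the companion paper \cite{BNVd}, so I only outline a standard truncation/compactness scheme adapted to the measure-valued setting and to the dyadic norm $\|\cdot\|$ of \eqref{moment0}. For $N\in\N$, I would introduce truncated kernels $K_N, \gamma_N$ obtained from $K,\gamma$ by a cut-off to $\{2^{-N}\leq\xi+\eta\leq 2^N\}$, and a regularized initial datum $g_0^N\in\mathcal{M}_+([-N,N])$ with $g_0^N\wstar g_0$. Since $K_N,\gamma_N$ are bounded and compactly supported, a Banach fixed-point argument in $C([0,T];\mathcal{M}_+([-N,N]))$ with the total variation norm produces a unique nonnegative mild solution $g_N$. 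Mass conservation \eqref{mass} then follows from \eqref{weakg} with $\vphi(x)=2^x$ by direct cancellation in both brackets.

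\emph{Step 2 (A priori bounds).} Testing \eqref{weakg} with $\vphi(x)=2^{\theta x}$, $\theta>\beta+1$, the fragmentation bracket $2^{\theta(y+1)}-2\cdot2^{\theta y}=(2^\theta-2)2^{\theta y}$ gives a dissipative contribution of size $\int 2^{(\theta+\beta)y}g_N\de y$, while the coagulation bracket is dominated via convexity and \eqref{suppK0} by $\int 2^{(\theta+\alpha)y}g_N\de y$ times total mass. Since $\alpha<1<\beta$, interpolation between $M_1$ and $M_{\theta+\beta}$ allows the dissipation to absorb the coagulation growth, and a Grönwall argument yields the second bound in \eqref{wpestg} uniformly in $N$. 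For the first bound, I would test against continuous approximations of $\chi_{[n,n+1)}$: the diagonal support \eqref{suppK0} and the delta structure \eqref{kernel4} couple only the neighbouring cells $m_{n-1},m_n,m_{n+1}$, and \eqref{kernel2bis}, \eqref{kernel5bis} control the kernel prefactors near zero, producing a closed Grönwall inequality for $\|g_N(\cdot,t)\|$ in terms of $\|g_0\|$ and the total mass.

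\emph{Step 3 (Compactness and limit).} Mass on $\{x\geq R\}$ is uniformly small by the $2^{\theta x}$-moment ($\theta>1$), and on $\{x\leq -R\}$ by the $\|g_N\|$ bound ($m_n\lesssim 2^n$ for $n<0$), so $\{g_N(\cdot,t)\}_{N,t}$ is tight on $\R$. From \eqref{weakg}, for every $\vphi\in\cc(\R)$ the map $t\mapsto\int g_N\vphi\de x$ is uniformly Lipschitz on $[0,T]$, and a diagonal Arzelà-Ascoli argument extracts a subsequence converging in $C([0,T];\mathcal{M}_+(\R)\text{-}w^*)$ to some $g$. The linear fragmentation term passes to the limit directly; in the bilinear coagulation term the compact diagonal support \eqref{suppK0} together with the weak convergence of $g_N\otimes g_N$ tested against continuous compactly supported functions on $\R^2$ identifies the limit. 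Extension of the weak formulation from $\vphi\in\cc$ to the class admitted in Definition~\ref{def:weakg} is done by truncation, using the moment bound to control the error at $+\infty$.

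\emph{Main obstacle.} The most delicate point is propagating $\|g_N\|$ near $-\infty$: coagulation transports mass from $[n,n+1)$ into $[n+1,n+2)$, which halves the weight $2^{-n}$, whereas fragmentation returns mass from $[n+1,n+2)$ to $[n,n+1)$, which doubles it; the sign of the net contribution to the weighted sup-norm must therefore be tracked with care. The compact support of $Q$ in \eqref{kernel3} is crucial here, since it produces a tridiagonal coupling between the $m_n$'s whose net balance can be closed by a telescoping estimate in the sup over $n<0$; the behaviour \eqref{kernel2bis}, \eqref{kernel5bis} of $k$ and $\gamma$ near zero then guarantees that the coefficients stay bounded in that range, and the whole estimate becomes essentially dimensionless.
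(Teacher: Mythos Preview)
The paper does not prove this theorem: it is stated for the reader's convenience and its proof is explicitly deferred to the companion paper \cite{BNVd}. So there is no proof in the present paper to compare your proposal against. You correctly identify this at the outset, and your sketch (truncation of kernels and data, local existence by fixed point, a priori moment bounds via the $2^{\theta x}$ test function with $\theta>\beta+1$ exploiting $\alpha<\beta$, dyadic norm control near $-\infty$, tightness and Arzel\`a--Ascoli) is a plausible and standard scheme for this type of result, consistent with how the paper later uses the theorem.
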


It is convenient to introduce a notation for the right-hand side of the weak equation \eqref{weakg}, evaluated on a given test function $\vphi$: therefore we define the operators
\begin{align} 
B_{\mathrm c}[g,g;\vphi]
& :=\frac{\ln2}{2}\int_{\R}\int_{\R} K(2^y,2^z)g(y)g(z)\biggl[\vphi\Bigl(\frac{\ln(2^y+2^z)}{\ln2}\Bigr) - \vphi(y) - \vphi(z) \biggr]\de y \de z \,, \label{rhsweakc} \\
B_{\mathrm f}[g;\vphi]
&:= \frac14\int_{\R} \gamma(2^{y+1}) g(y+1) \bigl[\vphi(y+1) - 2\vphi(y) \bigr]\de y \,.  \label{rhsweakf}
% &:= \int_{\R}\int_{\R} \frac{\gamma(2^{y+1})}{4}g(y+1)\delta(y-z) \biggl[\vphi\Bigl(\frac{\ln(2^y+2^z)}{\ln2}\Bigr) - \vphi(y) - \vphi(z) \biggr]\de y \de z \,.  \label{rhsweakf}
\end{align}

\begin{remark} \label{rm:supp}
Notice that, in view of \eqref{suppK0}, the coagulation kernel $K$ evaluated at the point $(2^y,2^z)$ is supported in the region $\{ |y-z|<1 \}$. In particular, there exists $\e_0\in(0,1)$, determined by the kernel $K$, such that
\begin{equation} \label{suppK}
\supp K(2^y,2^z) \subset \bigl\{ |y-z|<\e_0 \bigr\}.
\end{equation}
Furthermore, in view of the asymptotic properties \eqref{kernel2}--\eqref{kernel2bis} we have a uniform estimate
\begin{equation} \label{estK}
(2^y+2^z)K(2^y,2^z) \leq C_K (1+2^y2^z) \qquad\text{for every $y,z\in\R$, $|y-z|<1$.}
\end{equation}
\end{remark}

%%%%%%%%%%%%%%%%%%%%%%%%%%%%%%%%%%%%%%%%%%%%%%%%%%%%%%%%%%%%%%%%%%
%%%%%%%%%%%%%%%%%%%%%%%%%%%%%%%%%%%%%%%%%%%%%%%%%%%%%%%%%%%%%%%%%%

\subsection{Stationary solutions} \label{subsect:stationary}
In \cite{BNVd} we proved that the equation \eqref{eq:coagfrag2} admits a two-parameters family of stationary solutions supported in a set of Dirac masses at integer distance, of the form
\begin{equation}\label{peak1}
g_p(x;A,\rho) = \sum_{n=-\infty}^\infty a_n(A,\rho) \delta(x-n-\rho).
\end{equation}
The parameter $\rho\in[0,1)$ fixes the shifting of the peaks with respect to the integers, while $A>0$ characterizes the decay of the solution as $x\to\infty$ (see \eqref{peak5}). In particular, the parameter $A$ is in one-to-one correspondence with the total mass $M$ of the solution: given any $\rho\in[0,1)$ and any value of the total mass $M$, there exists a unique value $A_{M,\rho}>0$ such that the corresponding stationary solution $g_p(\cdot;A_{M,\rho},\rho)$ satisfies the mass constraint
\begin{equation} \label{peak2}
\int_{\R}2^{x}g_p(x;A_{M,\rho},\rho) \de x = \sum_{n=-\infty}^\infty 2^{n+\rho}a_n(A_{M,\rho},\rho) = M \,.
\end{equation}

By plugging the expression \eqref{peak1} into the weak formulation \eqref{weakg} of the equation, we see that $g_p$ is a stationary solution if the coefficients $a_n$ satisfy the recurrence equation
\begin{equation} \label{eq:stat}
a_{n+1} = \zeta_{n,\rho}a_n^2\,,
\qquad\text{where }
\zeta_{n,\rho} := \frac{\ln2}{2^{n+\rho}} \frac{k(2^{n+\rho})}{\gamma(2^{n+\rho+1})} \,.
\end{equation}
The existence of stationary solutions in the form \eqref{peak1} is guaranteed by the following proposition, proved in \cite{BNVd}.

\begin{proposition}[Stationary peaks solutions] \label{prop:stationary}
Let $\rho\in[0,1)$ and $A>0$ be given. There exists a unique family of coefficients $\{a_n(A,\rho)\}_{n\in\Z}$ solving \eqref{eq:stat} which are positive, bounded, and satisfy
\begin{equation} \label{peak5}
\begin{split}
a_n & = a_{-\infty}\bigl(2^n + A_02^{2n}\bigr) + o(2^{2n}) \qquad\qquad\text{as }n\to-\infty, \\
a_n & \sim a_\infty 2^{(\beta-\alpha)n}e^{-A2^n} \qquad\qquad\qquad\qquad\text{as } n\to\infty,
\end{split}
\end{equation}
where $a_{-\infty}:=\frac{\gamma_0 2^{\rho+1}}{k_0\ln2}$, $a_\infty:=(\ln2)^{-1}2^\beta2^{(\beta-\alpha)(\rho+1)}$, and $A_0$ is uniquely determined by $A$.

In particular, the measure $g_p(\cdot;A,\rho)$ defined by \eqref{peak1} is a stationary solution to \eqref{eq:coagfrag2}.
\end{proposition}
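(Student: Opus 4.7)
The plan is to linearize \eqref{eq:stat} via logarithms: setting $\ell_n := \ln a_n$ turns the recurrence into the linear first-order equation $\ell_{n+1} = 2\ell_n + \ln\zeta_{n,\rho}$, which is solvable in closed form. The homogeneous solutions are the multiples of $2^n$, and a convergent particular solution is
\[
\ell_n^{\mathrm{part}} := -\sum_{j=n}^{\infty}\frac{\ln\zeta_{j,\rho}}{2^{j+1-n}},
\]
which is well defined for every $n\in\Z$ because $\ln\zeta_{j,\rho}$ grows only linearly in $j$ (by \eqref{kernel2}--\eqref{kernel5bis}). Every positive solution of \eqref{eq:stat} then has the form $a_n = \exp(C\cdot 2^n + \ell_n^{\mathrm{part}})$ for some $C\in\R$.

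The two asymptotics in \eqref{peak5} are extracted from explicit summations. At $j\to+\infty$, \eqref{kernel2} and \eqref{kernel5} give $\ln\zeta_{j,\rho} = \ln\ln 2 + [(\alpha-\beta)(j+\rho) - \beta]\ln 2 + o(1)$; using the elementary identities $\sum_{j\geq n}2^{n-j-1}=1$ and $\sum_{j\geq n}j\cdot 2^{n-j-1} = n+1$, a direct computation yields $\ell_n^{\mathrm{part}} = (\beta-\alpha)n\ln 2 + \ln a_\infty + o(1)$ as $n\to+\infty$, with $a_\infty$ exactly as in the statement. The requirement that $a_n$ be bounded (equivalently, decay) as $n\to+\infty$ forces $C = -A$ with $A>0$ and reproduces the first line of \eqref{peak5}.

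At $j\to-\infty$, \eqref{kernel2bis} and \eqref{kernel5bis} give $\ln\zeta_{j,\rho} = -(j+\rho)\ln 2 + \ln(k_0\ln 2/\gamma_0) + O(2^{\bar\delta j})$ with $\bar\delta := \min(\bar\alpha,\bar\beta) > 1$. Splitting $\ell_n^{\mathrm{part}} = -\sum_{j=n}^{-1} - \sum_{j\geq 0}$, evaluating the finite part with the same two elementary sums (the truncation at $j=-1$ costs only $O(2^n)$) and bounding the second tail by $O(2^n)$, one obtains
\[
\ell_n^{\mathrm{part}} = (n+1+\rho)\ln 2 + \ln\!\Bigl(\frac{\gamma_0}{k_0\ln 2}\Bigr) + \kappa\cdot 2^n + O(2^{\bar\delta n}) \qquad\text{as } n\to-\infty,
\]
for an explicit constant $\kappa$ depending only on the kernels and $\rho$. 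Inserting $C=-A$ and exponentiating, the linear-in-$n$ part gives exactly $a_{-\infty}\,2^n$ with $a_{-\infty} = \gamma_0\,2^{\rho+1}/(k_0\ln 2)$, and the $O(2^n)$ correction produces the next term: $a_n = a_{-\infty}\bigl(2^n + (\kappa-A)\,2^{2n}\bigr) + o(2^{2n})$. Hence $A_0 := \kappa - A$ is uniquely determined by $A$.

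It remains to verify that $g_p(\cdot;A,\rho)$ solves the weak stationary equation. Plugging \eqref{peak1} into \eqref{weakg}, the support condition \eqref{suppK} collapses the double sum in \eqref{rhsweakc} to its diagonal $m=n$; using $Q(0)=1$ and $K(2^{n+\rho},2^{n+\rho}) = k(2^{n+\rho})/2^{n+\rho+1}$, and matching with the fragmentation contribution from \eqref{rhsweakf} (after a reindexing $m=n+1$), the arbitrariness of the admissible test function $\varphi$ reduces the stationarity condition to exactly \eqref{eq:stat}. I expect the main obstacle to lie in the expansion at $-\infty$: to read off the second-order coefficient $A_0$ from the exponential one must keep two successive orders of $\ell_n^{\mathrm{part}}$ and verify that the sub-leading corrections from \eqref{kernel2bis} and \eqref{kernel5bis} are of strictly smaller order than $2^{2n}$, which is precisely where the hypotheses $\bar\alpha,\bar\beta > 1$ enter critically.
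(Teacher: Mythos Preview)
Your proposal is correct and matches the paper's approach: the proposition itself is not proved here (it is cited from the companion paper \cite{BNVd}), but the same logarithmic linearization is exhibited in the proof of the generalization Lemma~\ref{lem:nearlystat}, where the closed form \eqref{nearlystat5} for $\bar\mu_n=\tfrac12\zeta_n\bar m_n$ is exactly your particular solution $\ell_n^{\mathrm{part}}$ after the cosmetic change of variable. Your asymptotic bookkeeping at both ends---including the observation that $\bar\alpha,\bar\beta>1$ makes the $O(2^{\bar\delta n})$ correction to $\ell_n^{\mathrm{part}}$ contribute only $o(2^{2n})$ to $a_n$ after multiplication by the leading $2^n$---is correct; the only slip is that what you call ``the first line of \eqref{peak5}'' is in fact the second (the $n\to+\infty$ behaviour).
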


In the proof of the main result of this paper we will not directly work with the solutions to the stationary equation \eqref{eq:stat} constructed in Proposition~\ref{prop:stationary}, as we did in \cite{BNVd}, but we will need to consider the more general case in which the parameter $\rho$ in \eqref{eq:stat} actually depends on $n$. More precisely, we assume that $p=\{p_n\}_{n\in\Z}$ is a given sequence such that $|p_n|\leq\delta_0$ for some given $\delta_0\in(0,1)$, and we consider the recurrence equation
\begin{equation} \label{nearlystat1}
\bar{m}_{n+1} = \zeta_{n}(p)\bar{m}_n^2\,,
\qquad\text{where}\quad
\zeta_{n}(p) := \frac{\ln2}{2^{n+p_n}} \frac{k(2^{n+p_n})}{\gamma(2^{n+1+p_{n+1}})} \,.
\end{equation}
The values $\bar{m}_n$ represent the coefficients of a stationary solution with peaks at the points $n+p_n$, $n\in\Z$.
We generalize Proposition~\ref{prop:stationary} to the case of a nonconstant shifting $p$ in Lemma~\ref{lem:nearlystat} below.

It will be sometimes convenient to switch to new variables
\begin{equation} \label{nearlystat2}
\bar{\mu}_n := \frac12\zeta_{n}(p)\bar{m}_n\,,
\end{equation}
solving
\begin{equation} \label{nearlystat3}
\bar{\mu}_{n+1} = \theta_n(p) \bar{\mu}_n^2\,,
\qquad \text{with}\quad
\theta_n(p) := \frac{2\zeta_{n+1}(p)}{\zeta_{n}(p)}\,.
\end{equation}
We also have the relation
\begin{equation} \label{nearlystat3bis}
\frac{\bar{m}_{n+1}}{\bar{m}_n} = 2\bar{\mu}_n.
\end{equation}
In view of assumptions \eqref{kernel2}-\eqref{kernel2bis} and \eqref{kernel5}--\eqref{kernel5bis} on the kernels, one can show the following asymptotic behaviour of the coefficients:
\begin{equation} \label{nearlystat8}
\begin{split}
\zeta_{n}(p)&= \Bigl(\frac{k_0\ln2}{\gamma_0}\Bigr)2^{-(n+p_n)} + O(2^{(\bar{c}-1)n}) \qquad\text{as }n\to-\infty, \\
\zeta_{n}(p)&\sim (\ln2) 2^{\alpha(n+p_n)}2^{-\beta(n+1+p_{n+1})} \qquad\qquad\text{ as }n\to\infty,
\end{split}
\end{equation}
and
\begin{equation} \label{nearlystat9}
\begin{split}
\theta_{n}(p) &= 2^{p_n-p_{n+1}} + O(2^{\bar{c}n}) \qquad\qquad\qquad\qquad\text{as }n\to-\infty, \\
\theta_{n}(p) &\sim 2^{\alpha-\beta+1}2^{\alpha(p_{n+1}-p_n)}2^{-\beta(p_{n+2}-p_{n+1})} \qquad\text{as }n\to\infty,
\end{split}
\end{equation}
where $\bar{c}:=\min\{\bar{\alpha},\bar{\beta}\}>1$. 

\begin{lemma} \label{lem:nearlystat}
Let $A>0$ and $p=\{p_n\}_{n\in\Z}$, with $|p_n|\leq\delta_0$, be given.
Then there exists a family of positive and bounded coefficients $\{\bar{m}_n(A,p)\}_{n\in\Z}$ solving \eqref{nearlystat1} which satisfy
\begin{equation} \label{nearlystat4}
\bar{m}_n = O(2^n) \quad\text{as $n\to-\infty$,}
\qquad
\bar{m}_n = O\bigl( 2^{(\beta-\alpha)n}e^{-A2^n} \bigr) \quad\text{as $n\to\infty$.}
\end{equation}
Furthermore
\begin{equation} \label{nearlystat6}
\frac{\partial\bar{m}_n}{\partial A} = -2^n\bar{m}_n,
\qquad\qquad
\Big|\frac{\partial\bar{m}_n}{\partial p_k}\Big| \leq c2^{n-k}\bar{m}_n \quad\text{for }k\geq n,
\end{equation}
and $\frac{\partial\bar{m}_n}{\partial p_k}=0$ for $k<n$, for some constant $c>0$ depending only on the kernels.
\end{lemma}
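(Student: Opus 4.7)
The plan is to construct $\bar{m}_n$ via an explicit closed-form expression. Working in the variable $\bar{\mu}_n = \tfrac12\zeta_n(p)\bar{m}_n$ from \eqref{nearlystat2}, the quadratic recurrence \eqref{nearlystat3} linearizes under the substitution $v_n := -\log \bar{\mu}_n$ to $v_{n+1} = 2 v_n - \log \theta_n(p)$; dividing by $2^{n+1}$, the rescaled sequence $w_n := v_n/2^n$ then satisfies the telescoping relation $w_{n+1} - w_n = -\log\theta_n(p)/2^{n+1}$. The asymptotics \eqref{nearlystat9} combined with $|p_n|\leq\delta_0$ yield a uniform bound $|\log\theta_n(p)|\leq C$ depending only on the kernels and $\delta_0$, so the series $\sum_{j=n}^\infty \log\theta_j(p)/2^{j+1}$ converges absolutely for every $n\in\Z$. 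The idea is then to set
\[
w_n(A,p) := A + \sum_{j=n}^\infty \frac{\log\theta_j(p)}{2^{j+1}}, \qquad \bar{\mu}_n(A,p) := e^{-2^n w_n(A,p)}, \qquad \bar{m}_n(A,p) := \frac{2\bar{\mu}_n(A,p)}{\zeta_n(p)},
\]
which is positive by construction and solves \eqref{nearlystat1} by reversing the derivation.

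For the asymptotics \eqref{nearlystat4}, write $v_n = A\cdot 2^n + 2^n S_n(p)$ with $S_n(p) := \sum_{j=n}^\infty \log\theta_j(p)/2^{j+1}$. As $n\to+\infty$, $S_n\to 0$, so $\bar{\mu}_n = O(e^{-A 2^n})$, and the bound $1/\zeta_n=O(2^{(\beta-\alpha)n})$ from \eqref{nearlystat8} gives the desired exponential decay. As $n\to-\infty$, the uniform bound on $\log\theta_j$ forces $|2^n S_n(p)|\leq C\sum_{j\geq n}2^{n-j-1}\leq C$, so $\bar{\mu}_n$ is pinched between two positive constants, and the asymptotic $1/\zeta_n=O(2^n)$ from \eqref{nearlystat8} yields $\bar{m}_n=O(2^n)$.

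For the derivative formulas \eqref{nearlystat6}, the only $A$-dependence in $v_n$ is the explicit term $A\cdot 2^n$, so $\partial v_n/\partial A = 2^n$ and hence $\partial\bar{m}_n/\partial A=-2^n\bar{m}_n$ exactly. For the $p_k$-derivatives, note that $\zeta_n$ depends only on $p_n,p_{n+1}$ and $\theta_j$ only on $p_j,p_{j+1},p_{j+2}$, so $\bar{m}_n$ is independent of $p_k$ whenever $k<n$. For $k\geq n$, the regularity estimates \eqref{kernel2ter}--\eqref{kernel2quater} and \eqref{kernel5ter}--\eqref{kernel5quater} produce uniform-in-$n$ bounds $|\partial\log\theta_j/\partial p_k|,|\partial\log\zeta_n/\partial p_k|\leq C$; at most three terms in the series defining $v_n$ depend on $p_k$ (those with $j\in\{k-2,k-1,k\}\cap[n,\infty)$), each contributing at most $C\cdot 2^n/2^{j+1}\leq C\cdot 2^{n-k}$, which gives the claimed bound on $\partial\bar{m}_n/\partial p_k$.

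The main technical obstacle is establishing the uniform-in-$n$ bounds on $|\log\theta_n(p)|$ and its $p$-derivatives: while \eqref{nearlystat8}--\eqref{nearlystat9} handle the extreme regimes, the estimates require carefully combining the asymptotic behaviour of $k,\gamma$ at $0$ and $\infty$ with the pointwise bounds on their derivatives, and checking that the shifting $p_n$ (bounded but not tending to any specific limit) does not disrupt these bounds across the transition region around $n+p_n\approx 0$.
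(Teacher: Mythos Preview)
Your proposal is correct and takes essentially the same approach as the paper: your closed-form expression for $\bar{\mu}_n$ coincides (after reindexing $j\mapsto j-1$) with the paper's formula \eqref{nearlystat5}, and your derivation via the linearization $v_n=-\log\bar\mu_n$ simply makes explicit how that formula arises. For the $p_k$-derivative the paper carries out the full computation and obtains the exact expression \eqref{nearlystat7} via a telescoping sum, whereas you use the equivalent but more direct observation that at most three $\theta_j$'s (plus possibly $\zeta_n$) depend on $p_k$; both routes yield the bound $c\,2^{n-k}\bar m_n$, and the uniform bounds on $|\partial\log\zeta_j/\partial p_k|$ that you flag as the main technicality are exactly what the paper extracts from \eqref{kernel2ter} and \eqref{kernel5ter}.
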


\begin{proof}
For given $A$ and $p$, it is easily checked that the sequence $\{\bar{\mu}_n\}_{n\in\Z}$ defined by
\begin{equation} \label{nearlystat5}
\bar{\mu}_n = e^{-A2^n}\exp\Biggl( -2^n\sum_{j=n+1}^{\infty}2^{-j}\ln(\theta_{j-1}(p))\Biggr)
\end{equation}
is a solution to \eqref{nearlystat3}. In turn, defining $\bar{m}_n$ by the relation \eqref{nearlystat2}, and recalling the asymptotic behaviour \eqref{nearlystat8}--\eqref{nearlystat9}, we obtain a positive and bounded solution to \eqref{nearlystat1} with the decay \eqref{nearlystat4}. The first identity in \eqref{nearlystat6} follows directly from \eqref{nearlystat2} and \eqref{nearlystat5}. Finally by \eqref{nearlystat2} and the definitions \eqref{nearlystat1}, \eqref{nearlystat3} of $\zeta_n(p)$, $\theta_n(p)$ we have with straightforward computations
\begin{equation*}
\begin{split}
\frac{1}{\bar{m}_n}\frac{\partial\bar{m}_n}{\partial p_k}
& = \frac{1}{\bar{\mu}_n}\frac{\partial\bar{\mu}_n}{\partial p_k} - \frac{1}{\zeta_n}\frac{\partial\zeta_n}{\partial p_k}
= -2^n\sum_{j=n+1}^\infty \frac{2^{-j}}{\theta_{j-1}}\frac{\partial\theta_{j-1}}{\partial p_k} - \frac{1}{\zeta_n}\frac{\partial\zeta_n}{\partial p_k} \\
& = -2^n\sum_{j=n+1}^\infty 2^{-j} \Bigl( \frac{1}{\zeta_{j}}\frac{\partial\zeta_{j}}{\partial p_k} - \frac{1}{\zeta_{j-1}}\frac{\partial\zeta_{j-1}}{\partial p_k}\Bigr)
- \frac{1}{\zeta_n}\frac{\partial\zeta_n}{\partial p_k}
= - \sum_{j=n}^\infty 2^{n-j-1}\frac{1}{\zeta_j}\frac{\partial\zeta_j}{\partial p_k}.
\end{split}
\end{equation*}
Observing that
\begin{equation*}
\frac{1}{\zeta_j}\frac{\partial\zeta_j}{\partial p_k} =
\begin{cases}
\ln2\Bigl( \frac{k'(2^{k+p_k})2^{k+p_k}}{k(2^{k+p_k})}-1\Bigr) & \text{if }j=k,\\
-\frac{\gamma'(2^{k+p_k})2^{k+p_k}\ln2}{\gamma(2^{k+p_k})} & \text{if }j=k-1,\\
0 & \text{otherwise,}\\
\end{cases}
\end{equation*}
we finally obtain
\begin{equation} \label{nearlystat7}
\frac{1}{\bar{m}_n}\frac{\partial\bar{m}_n}{\partial p_k} =
\begin{cases}
-2^{n-k-1}\ln2\Bigl( \frac{k'(2^{k+p_k})2^{k+p_k}}{k(2^{k+p_k})}-1\Bigr) + 2^{n-k} \frac{\gamma'(2^{k+p_k})2^{k+p_k}\ln2}{\gamma(2^{k+p_k})} & \text{if } k>n,\\
-2^{n-k-1}\ln2\Bigl( \frac{k'(2^{k+p_k})2^{k+p_k}}{k(2^{k+p_k})}-1\Bigr)& \text{if } k=n,\\
0 & \text{if } k<n.\\
\end{cases}
\end{equation}
The second condition in \eqref{nearlystat6} follows thanks to the assumptions \eqref{kernel2ter}, \eqref{kernel5ter}.
\end{proof}

%%%%%%%%%%%%%%%%%%%%%%%%%%%%%%%%%%%%%%%%%%%%%%%%%%%%%%%%%%%%%%%%%%
%%%%%%%%%%%%%%%%%%%%%%%%%%%%%%%%%%%%%%%%%%%%%%%%%%%%%%%%%%%%%%%%%%

\subsection{Main result} \label{subsect:mainresult}
We are now in the position to state the main result of the paper. It is first convenient to introduce a notation for the following space of sequences: for $\theta\in\R$, we set
\begin{equation} \label{spacesequences}
\mathcal{Y}_\theta := \Bigl\{ y=\{y_n\}_{n\in\Z} \,:\, \|y\|_{\theta}<\infty \Bigr\},
\qquad\text{with}\quad
\|y\|_{\theta} := \sup_{n\leq0} \, 2^{n}|y_n| + \sup_{n>0} \, 2^{\theta n}|y_n|.
\end{equation}
Given an initial datum $g_0\in\measg$, we also introduce the following quantities:
\begin{align}
m_n^0 &:= \int_{[n-1/2,n+1/2)} g_0(x)\de x\,, \label{mn0}\\
p_n^0 &:= \frac{1}{m_n^0} \int_{[n-1/2,n+1/2)} g_0(x)(x-n)\de x\,, \label{pn0}\\
q_n^0 &:= \frac{1}{m_n^0} \int_{[n-1/2,n+1/2)} g_0(x)(x-n-p_n^0)^2\de x \,, \label{qn0}
\end{align}
and $m^0:=\{m_n^0\}_{n\in\Z}$, $p^0:=\{p_n^0\}_{n\in\Z}$,  $q^0:=\{q_n^0\}_{n\in\Z}$.
Our main result reads as follows.

\begin{theorem}[Stability of stationary peaks solutions] \label{thm:stability}
Given $M>0$, there exists $\delta_0>0$, depending only on $M$, with the following property.
Let $g_0\in\measg$ be an initial datum with total mass
\begin{equation} \label{mass0}
	\int_{\R} 2^x g_0(x)\de x = M,
\end{equation}
and support
\begin{equation} \label{supp0}
\supp g_0 \subset \bigcup_{n\in\Z} (n-\delta_0,n+\delta_0).
\end{equation}
Assume further that
\begin{equation} \label{m0}
m_n^0 = \bar{m}_n(A^0, p^0)(1+2^ny_n^0),
\end{equation}
for some $A^0>0$ and $y^0=\{y_n^0\}_{n\in\Z}\in\mathcal{Y}_1$ satisfying
\begin{equation} \label{A0y0}
|A^0-A_M|\leq\delta_0,\qquad \|y^0\|_1\leq\delta_0.
\end{equation}
Then there exists a weak solution $g$ to \eqref{eq:coagfrag2} with initial datum $g_0$, and $\rho\in[0,1)$, such that
\begin{equation*}
g(\cdot,t) \to g_p(\cdot\,;A_{M,\rho},\rho) \qquad\text{in the sense of measures as $t\to\infty$,}
\end{equation*}
where $g_p(\cdot\,;A_{M,\rho},\rho)$ is the stationary solution with total mass $M$ concentrated in peaks at the points $\{n+\rho\}_{n\in\Z}$, see \eqref{peak1}--\eqref{peak2}.
\end{theorem}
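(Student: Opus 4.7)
The plan is to work throughout in the logarithmic variables of Section~\ref{sect:variables} and to track the evolution of the local moments $m_n(t)$, $p_n(t)$, $q_n(t)$ defined by \eqref{mn0}--\eqref{qn0} with $g_0$ replaced by $g(\cdot,t)$. Thanks to the support condition \eqref{supp0} with $\delta_0<\tfrac12$ and the locality property \eqref{suppK} of the coagulation kernel, for all $t\geq0$ the solution stays concentrated in a disjoint union of intervals around the integers; only neighbouring lattice sites interact through coagulation, while fragmentation feeds site $n$ from site $n+1$. Testing the weak formulation \eqref{weakg} against the indicator of $[n-\tfrac12,n+\tfrac12)$, against $(x-n)\mathbf{1}_{[n-1/2,n+1/2)}$ and against $(x-n-p_n(t))^2\mathbf{1}_{[n-1/2,n+1/2)}$ (suitably regularised, which is allowed because of the support gaps) yields a coupled infinite ODE system for $\{m_n,p_n,q_n\}_{n\in\Z}$ in which the transfer of mass, of first moments, and of variances between lattice sites is explicit.

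Following the strategy sketched above \eqref{intro4}, the first key step is to represent the masses at each time in the form
\[
m_n(t) = \bar m_n\bigl(A(t),p(t)\bigr)\bigl(1+2^n y_n(t)\bigr),
\]
where $\bar m_n$ is the nonconstant-shift stationary profile from Lemma~\ref{lem:nearlystat} evaluated at the current first-moment vector $p(t)$, and $A(t)$ is determined by imposing mass conservation together with the relation $\partial_A\bar m_n=-2^n\bar m_n$ from \eqref{nearlystat6}. Plugging this ansatz into the $\dot m_n$ equation, the leading-order terms cancel by virtue of the stationary recurrence \eqref{nearlystat1}, leaving a linearised equation $\dot y = \mathcal{L}_{p(t)}\,y + \mathcal{N}(y,p,q)$ on the weighted sequence space $\mathcal{Y}_1$. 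The operator $\mathcal{L}_{p(t)}$ is precisely the one whose spectral and regularity properties are the object of Section~\ref{sect:linear} and of the appendix; granted those estimates, one sets up a contraction argument in a suitable time-weighted norm on $\mathcal{Y}_1$ and shows that $y(t)$ stays of size $O(\delta_0)$ for all times, and in fact decays.

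The second ingredient is the analysis of the ODEs for $p_n$ and $q_n$, in which the ansatz for $m_n$ is substituted and the smallness of $y$ is exploited. The equation for $q_n$ should acquire the structure $\dot q_n = -\lambda_n q_n + r_n$, with $\lambda_n$ bounded below by a positive constant coming from diagonal fragmentation (which halves the deviation from the lattice site at each break-up) and from self-coagulation (which contracts dispersion quadratically), yielding exponential decay of $q_n(t)$ uniformly in $n$. The system for $p_n$ is of discrete convection type along the shift $n\mapsto n+1$ induced by fragmentation, and a linear analysis around the common value drives all $p_n(t)$ to a single constant $\rho\in[0,1)$. Once $p_n(t)\to\rho$ and $q_n(t)\to0$, the ansatz forces $m_n(t)\to\bar m_n(A_\infty,\rho)$ with $A_\infty=\lim_{t\to\infty}A(t)$; mass conservation \eqref{mass} together with the characterisation \eqref{peak2} then identifies $A_\infty$ as $A_{M,\rho}$, proving the claimed convergence in the sense of measures.

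The main obstacle I anticipate is closing this coupled fixed-point argument globally in time while the reference stationary profile $\bar m_n(A(t),p(t))$ itself is moving: the linearised operator $\mathcal{L}_{p(t)}$ changes along the trajectory, and one must verify that it is uniformly well-behaved for all admissible shifts with $\sup_n|p_n|\leq\delta_0$ (this is where the quantitative bounds \eqref{nearlystat6}--\eqref{nearlystat9} enter), and that the source terms produced by $\dot p$, $\dot A$, and by the remainders involving $q$ are integrable in time so that $y$ and $p$ do not drift. Once uniform control on $(y,p,q,A)$ is obtained, passing to the limit $t\to\infty$ in the ansatz gives the stability statement.
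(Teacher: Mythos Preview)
Your overall strategy matches the paper's: derive moment equations for $m_n,p_n,q_n$, represent $m_n(t)=\bar m_n(A(t),p(t))(1+2^ny_n(t))$, linearise, and use the spectral theory of Section~\ref{sect:linear} to run a fixed-point argument while showing $q_n\to0$ and $p_n\to\rho$. But there is a genuine gap in how you propose to close the loop, and one point where your scheme differs from the paper's in a way that matters.

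The circularity you allude to at the end is not merely a difficulty of a moving linearised operator; it is structural. To run the fixed point for $(y,A)$ you need a priori decay of $p$ and $q$ (they enter the remainders and the coefficients $\sigma_n(t)$), but the decay of $p$ and $q$ is proved \emph{from} the representation \eqref{yn} via the equations \eqref{pneq3}--\eqref{qneq3}. The paper breaks this circle by first \emph{truncating} the problem at $n\leq N$ (truncated kernels and initial datum, Section~\ref{subsect:truncation}): for the finite system one can assume the decay estimates \eqref{decaypn1}--\eqref{decayqn1} on a short interval $[0,t^N]$ for free, run the fixed point there (Proposition~\ref{prop:fp}), then use the resulting representation to show the same estimates hold with \emph{strict} inequality at $t^N$ (Proposition~\ref{prop:continuation}), and extend by continuation to all $t>0$. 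Only afterwards does one let $N\to\infty$ and then $t\to\infty$. Without this truncation/continuation device your contraction argument has no starting point.

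Second, your proposal to fix $A(t)$ by mass conservation is not what the paper does and would not mesh with the truncated scheme. In the paper $A^N(t)$ is selected by the boundary condition $y^N_N(t)\equiv0$ (see \eqref{yn3}--\eqref{fp2}); this is what makes the Duhamel formula \eqref{fp1} close in the space $\mathcal{Y}^N_\beta$ and yields the equation for $\dot A^N$ needed in the contraction. Mass is then recovered only at the very end, after $N\to\infty$ and $t\to\infty$, to identify the limit $A_\infty$ with $A_{M,\rho}$.
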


In the statement of the theorem, $\bar{m}_n$ are the coefficients introduced in Lemma~\ref{lem:nearlystat}, and $A_M=A_{M,0}>0$ is the unique value such that the stationary solution $g_p(\cdot;A_M,0)$ has total mass $M$ (see \eqref{peak2}). The full argument for the proof of Theorem~\ref{thm:stability} will be given in Section~\ref{sect:proof}.

\begin{remark} \label{rm:wasserstein}
The proof of Theorem~\ref{thm:stability} shows that a much stronger convergence result holds: indeed we also obtain a uniform exponential decay in time of the 2-Wasserstein distance between the (normalized) restriction of the measure $g(\cdot,t)$ to each interval $(n-\frac12,n+\frac12)$, $n\in\Z$, and the corresponding Dirac delta centered at the point $n+\rho$:
\begin{equation} \label{wasserstein}
\sup_{n\in\Z} W_2(g_n(t),\delta_\rho) \leq Ce^{-\frac{\nu}{2} t},
\qquad\text{where}\quad g_n(t):= \frac{1}{m_n(t)}g(\cdot+n,t)\chi_{(-\frac12,\frac12)}
\end{equation}
(see \eqref{mn} for the definition of $m_n(t)$), where $W_2$ denotes the 2-Wasserstein distance
$$
W_2(\mu,\nu) := \biggl( \inf_{\pi\in\Pi(\mu,\nu)}\int_{\R\times\R} |x-y|^2\de\pi(x,y) \biggr)^\frac12.
$$
Indeed denoting by $p_n(t)$ and $q_n(t)$ the first and second moment of $g(\cdot,t)$ around each peak (see \eqref{pn} and \eqref{qn}), from the definition of $W_2$ one finds
\begin{align*}
W_2^2(g_n(t),\delta_\rho)= \frac{1}{m_n(t)}\int_{(n-\frac12,n+\frac12)} |x-n-\rho|^2 g(x,t)\de x \leq 2q_n(t) + 2|p_n(t)-\rho|^2 \leq Ce^{-\nu t},
\end{align*}
where the uniform exponential convergence of $q_n(t)\to0$ and $p_n(t)\to\rho$ is obtained in the proof (see for instance \eqref{decaypn1}--\eqref{decayqn1}).
\end{remark}

%%%%%%%%%%%%%%%%%%%%%%%%%%%%%%%%%%%%%%%%%%%%%%%%%%%%%%%%%%%%%%%%%%
%%%%%%%%%%%%%%%%%%%%%%%%%%%%%%%%%%%%%%%%%%%%%%%%%%%%%%%%%%%%%%%%%%
%%%%%%%%%%%%%%%%%%%%%%%%%%%%%%%%%%%%%%%%%%%%%%%%%%%%%%%%%%%%%%%%%%
%%%%%%%%%%%%%%%%%%%%%%%%%%%%%%%%%%%%%%%%%%%%%%%%%%%%%%%%%%%%%%%%%%
%%%%%%%%%%%%%%%%%%%%%%%%%%%%%%%%%%%%%%%%%%%%%%%%%%%%%%%%%%%%%%%%%%
%%%%%%%%%%%%%%%%%%%%%%%%%%%%%%%%%%%%%%%%%%%%%%%%%%%%%%%%%%%%%%%%%%
%%%%%%%%%%%%%%%%%%%%%%%%%%%%%%%%%%%%%%%%%%%%%%%%%%%%%%%%%%%%%%%%%%
%%%%%%%%%%%%%%%%%%%%%%%%%%%%%%%%%%%%%%%%%%%%%%%%%%%%%%%%%%%%%%%%%%
%%%%%%%%%%%%%%%%%%%%%%%%%%%%%%%%%%%%%%%%%%%%%%%%%%%%%%%%%%%%%%%%%%
%%%%%%%%%%%%%%%%%%%%%%%%%%%%%%%%%%%%%%%%%%%%%%%%%%%%%%%%%%%%%%%%%%

\section{Strategy of the proof and auxiliary results}\label{sect:strategy}

In the following we assume that $g_0\in\measg$ is a given initial datum satisfying the assumptions of Theorem~\ref{thm:stability}: in particular, $M>0$ is a fixed quantity denoting the total mass of the solution (see \eqref{mass0}), and $g_0$ is supported in the union of intervals $\bigcup_{n\in\Z} I_n$, where $I_n:=(n-\delta_0,n+\delta_0)$ (see \eqref{supp0}), for some $\delta_0\in(0,1)$ to be chosen later. We now present the general strategy for the proof of Theorem~\ref{thm:stability}.

We will first show in Lemma~\ref{lem:supp} that the structure of the support of the solution is preserved by the evolution, that is, every weak solution $g(\cdot,t)$ starting from $g_0$ remains supported for all positive times in the union of the intervals $I_n$. In view of this property, we define for $n\in\Z$ the following quantities:
\begin{align}
m_n(t) &:= \int_{I_n} g(x,t)\de x, \label{mn}\\
p_n(t) &:= \frac{1}{m_n(t)} \int_{I_n}g(x,t)(x-n)\de x, \label{pn}\\
q_n(t) &:= \frac{1}{m_n(t)} \int_{I_n}g(x,t)(x-n-p_n(t))^2\de x. \label{qn}
\end{align}
Notice in particular that $p_n(t)\in[-\delta_0,\delta_0]$ is chosen so that
\begin{equation} \label{pn2}
\int_{I_n} g(x,t)\bigl(x-n-p_n(t)\bigr)\de x =0,
\end{equation}
and that $0\leq q_n(t) \leq 4\delta_0^2$.
The proof of Theorem~\ref{thm:stability} will be achieved by showing that $q_n(t)$ decays exponentially to zero as $t\to\infty$ (this gives concentration in peaks), and that each $p_n(t)$ aligns to a constant value $\rho$ independent of $n$ (which determines the asymptotic position of the peaks as $t\to\infty$).

The evolution equation for $m_n(t)$, which we derive below in Lemma~\ref{lem:approx} starting from the weak formulation of the equation, can be seen as a perturbation of the equation
\begin{equation*}
\frac{\de m_n}{\de t}
= \frac{\gamma(2^{n+\rho})}{4} \Bigl( \zeta_{n-1,\rho}(m_{n-1}(t))^2 - m_n(t) \Bigr) - \frac{\gamma(2^{n+1+\rho})}{2} \Bigl( \zeta_{n,\rho}(m_{n}(t))^2 - m_{n+1}(t) \Bigr) \,.
% & = \frac{\ln2}{2}\frac{k(2^{n-1+\rho})}{2^{n+\rho}} (b_{n-1}(t))^2 - \ln2\frac{k(2^{n+\rho})}{2^{n+1+\rho}} (b_{n}(t))^2 - \frac{\gamma(2^{n+\rho})}{4}b_{n}(t) + \frac{\gamma(2^{n+1+\rho})}{2}b_{n+1}(t) \,.
\end{equation*}
We studied this equation in detail in the companion paper \cite{BNVd}: it corresponds to the particular case in which all the variances $q_n(t)$ are identically equal to zero, and all the first moments $p_n(t)$ are equal to a constant value $\rho$. In this case, the functions $m_n(t)$ would represent the coefficients of a solution which remains concentrated for all times at the points $\{n+\rho\}_{n\in\Z}$. In the main result of \cite{BNVd} we showed that, if this solution is initially a small perturbation of one of the stationary states given by Proposition~\ref{prop:stationary}, then this property is preserved for later times. The approach is then to reproduce the argument of \cite{BNVd} adapting it to this more general setting: that is, we try to construct a solution such that $m_n(t)$ is at each time a perturbation of a stationary state. More precisely, we will show that the quantities $m_n(t)$ can be represented in the form
\begin{equation} \label{yn}
m_n(t) = \bar{m}_n(A(t),p(t))\bigl( 1 + 2^ny_n(t) \bigr),
\end{equation}
where $\bar{m}_n$ are given by Lemma~\ref{lem:nearlystat}, for suitable functions $A(t)$ and $y(t)=\{y_n(t)\}_{n\in\Z}$ to be determined via a fixed point argument (Proposition~\ref{prop:fp}).

The structure \eqref{yn} allows us to write the evolution equations for the first and second moments $p_n(t)$, $q_n(t)$ in a handier way, and to prove the desired decay of these quantities.

The key idea is that the structure of all the evolution equations for $m_n$, $p_n$, $q_n$ can be seen as a perturbation of the same linearized problem, for which we provide a full regularity theory in Section~\ref{sect:linear} (whose proof is postponed to Appendix~\ref{sect:appendix}). The most technical part of the proof consists then in proving uniform estimates on the remainder terms.

A drawback in this approach is that  in order to prove the representation \eqref{yn} we would need to assume \emph{a priori} that the expected decay of $p_n(t)$ 
and $q_n(t)$ holds, which on the other hand we can prove only by exploiting \eqref{yn} itself. To avoid the circularity of the argument, we first need to consider 
a truncated problem, in which we cut-off the tail of the solution at large distance $n>N>>1$. The advantage is that for the truncated problem we can assume that
$p_n$ and $q_n$ have the expected decay \emph{for a short interval of time}, which allows us to start the process and to show that \eqref{yn} holds for small times. 
Then by a continuation argument we can extend all the estimates globally in time. In a final step we will complete the proof by sending the truncation parameter $N$ to infinity.

In the rest of this section we prove some auxiliary results: we show the property of preservation of the support, already mentioned, and we 
compute the evolution equations for the quantities $m_n(t)$, $p_n(t)$, $q_n(t)$.

%%%%%%%%%%%%%%%%%%%%%%%%%%%%%%%%%%%%%%%%%%%%%%%%%%%%%%%%%%%%%%%%%%
%%%%%%%%%%%%%%%%%%%%%%%%%%%%%%%%%%%%%%%%%%%%%%%%%%%%%%%%%%%%%%%%%%

\subsection{Preservation of the support} \label{subsect:support}
We start by showing that any weak solution $g$ remains concentrated on the support of the initial datum for all positive times.

\begin{lemma} \label{lem:supp}
Assume that $g$ is a weak solution to \eqref{eq:coagfrag2} with initial datum $g_0$, according to Definition~\ref{def:weakg}. Suppose that the measure $g_0$ is supported in a union of intervals of the form $\bigcup_{n\in\Z}(n-\delta_0,n+\delta_0)$, for a given $\delta_0>0$ such that
\begin{equation} \label{delta0a}
\delta_0 < \frac{1-\e_0}{2} \,,
\end{equation}
where $\e_0$ is defined in \eqref{suppK}.
Then this condition is preserved for later times.
\end{lemma}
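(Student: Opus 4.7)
The content of the lemma is geometrically clear. By \eqref{suppK}, the coagulation kernel $K(2^y,2^z)$ is supported in $\{|y-z|<\e_0\}$; under the assumption $\delta_0<(1-\e_0)/2$, if $y\in\overline{I_k}$ and $z\in\overline{I_l}$ satisfy $|y-z|<\e_0$, then $|y-z|\ge|k-l|-2\delta_0$ forces $k=l$, and a direct computation shows $W(y,z):=\log_2(2^y+2^z)\in\overline{I_{k+1}}$. So coagulation cannot transfer mass out of $\overline{S}:=\bigcup_n\overline{I_n}$. Since fragmentation is concentrated on $\xi=2\eta$, it only sends mass from $y+1$ to $y$, and $\overline{S}$ is invariant under integer translations. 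Hence the dynamics preserves $\overline{S}$ infinitesimally.

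I would make this rigorous via a Gronwall argument in the weak formulation, with test function $\Psi(x):=\dist(x,\overline{S})$ (bounded, $1$-Lipschitz, $1$-periodic, vanishing exactly on $\overline{S}$) multiplied by a suitable positive weight $w(x)$ chosen to tame the fragmentation rate growth at infinity. Setting $F(t):=\int\Psi w g(\cdot,t)\de x$ (finite by \eqref{wpestg} with, e.g., $w(x)=1+2^{(\beta+1)x}$, possibly enhanced), the task reduces to the differential inequality $F'(t)\le C(g(t))F(t)$ with $C(g)$ locally integrable in time; combined with $F(0)=0$, Gronwall then yields $F\equiv 0$, hence $\supp g(\cdot,t)\subset\overline{S}$ (the distinction between this closed set and the open $\bigcup_n(n-\delta_0,n+\delta_0)$ in the statement is inessential).

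The geometric heart of the proof is the estimate
\begin{equation*}
\Psi(W(y,z))\le C_\star\bigl(\Psi(y)+\Psi(z)\bigr)\qquad\text{for all $y,z$ with $|y-z|<\e_0$,}
\end{equation*}
which uses precisely $\delta_0<(1-\e_0)/2$. Letting $y',z'\in\overline{S}$ be the closest projections of $y,z$: either $y',z'$ lie in a common cell $\overline{I_k}$, in which case $W(y',z')\in\overline{I_{k+1}}\subset\overline{S}$ and the $1$-Lipschitz continuity of $W$ in each argument gives the bound with $C_\star=1$; or $|y'-z'|\ge 1-2\delta_0$, forcing $\Psi(y)+\Psi(z)\ge 1-2\delta_0-\e_0>0$, so the bound is trivial by boundedness of $\Psi$. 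Inserting $\Psi w$ into \eqref{weakg} and combining this estimate with the triangle inequality, the $1$-periodicity identity $\Psi(y+1)-2\Psi(y)=-\Psi(y)$, the uniform bound $\int K(2^y,2^z)g(z)\de z\le C(g)$ (provable from \eqref{estK}, \eqref{mass}, \eqref{moment0} using $\alpha<1$), and the moment bounds from Theorem~\ref{thm:wp}, closes the differential inequality for $F$.

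The main obstacle is the closure of the fragmentation contribution: since $\gamma(2^u)$ grows like $2^{\beta u}$, controlling $\int\gamma\Psi wg$ by a constant times $F$ requires a careful choice of $w$ and use of the moment hierarchy preserved by the flow guaranteed by Theorem~\ref{thm:wp}. The geometric inequality for $\Psi(W)$ is the one place where the sharp constraint $\delta_0<(1-\e_0)/2$ is actually used.
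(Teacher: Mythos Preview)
Your plan is correct and is essentially the same Gronwall argument as the paper's, but you are making your life harder than necessary with the weight. The paper takes the test function $\psi(x)=2^x\chi_{\overline S}(x)$ (approximated by continuous functions), sets $M_{\mathrm{ext}}(t)=\int 2^x(1-\chi_{\overline S})g(\cdot,t)$, and shows $M_{\mathrm{ext}}'\le C(\|g(t)\|+M)M_{\mathrm{ext}}$. Your distance function $\Psi$ plays the role of $1-\chi_{\overline S}$; your geometric inequality $\Psi(W(y,z))\le C_\star(\Psi(y)+\Psi(z))$ is the quantitative counterpart of the paper's exact statement that $y,z\in\overline S$ with $|y-z|<\e_0$ forces $W(y,z)\in\overline S$ (which they use to get $B_{\mathrm c}[g\chi_{\overline S},g\chi_{\overline S};\psi]=0$, then bound the mixed and exterior pieces). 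Both routes lead to the same differential inequality, and your bound $\int K(2^y,2^z)g(z)\de z\le C(M,\|g\|)$ is exactly what the paper uses to close the coagulation term.

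The point you flag as ``the main obstacle'' disappears with the right weight. Take $w(x)=2^x$ instead of $1+2^{(\beta+1)x}$. Your own $1$-periodicity identity then gives $(\Psi w)(y+1)-2(\Psi w)(y)=\Psi(y)\bigl(2^{y+1}-2\cdot 2^y\bigr)=0$, so the fragmentation contribution to $F'$ is \emph{identically zero}---no closure argument needed, and no appeal to higher moments. This is precisely the paper's choice, and it has the side benefit that $\Psi w\lesssim 2^x$ is an admissible test function in Definition~\ref{def:weakg}, whereas $\Psi\cdot(1+2^{(\beta+1)x})$ is not (it grows faster than $2^x$), so with your weight you would in any case need an additional approximation step. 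With $w=2^x$ the coagulation bound also goes through unchanged, since on $\{|y-z|<\e_0\}$ one has $2^{W(y,z)}=2^y+2^z\le 3\cdot 2^y$, hence $(\Psi w)(W(y,z))\le 3C_\star\bigl(\Psi(y)2^y+\Psi(z)2^z\bigr)$.
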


\begin{proof}
We write $I:=[-\delta_0,\delta_0]$ and we consider the test function
\begin{equation*}
\vphi:=\sum_{n=-\infty}^\infty \chi_I(\cdot-n)\,.
\end{equation*}
Let us split the total mass $M=\int_{\R} 2^xg(x,t)\de x$ of the solution, which is preserved along the evolution, as
\begin{equation} \label{lemsupp1}
M=M_{\mathrm{int}}(t) + M_{\mathrm{ext}}(t)\,,
\qquad\text{where }
M_{\mathrm{int}}(t) := \int_{\R} 2^{x}g(x,t)\vphi(x)\de x \,.
\end{equation}
We obtain an evolution equation for $M_{\mathrm{int}}(t)$ by using a sequence of continuous test functions approximating $\psi(x):=2^x\vphi(x)$ in the weak formulation \eqref{weakg}. Recall that the right-hand side of \eqref{weakg} is the difference of the two operators $B_{\mathrm c}$ and $B_{\mathrm f}$ introduced in \eqref{rhsweakc}--\eqref{rhsweakf}; in particular, as  $B_{\mathrm c}$ is a quadratic form we have
\begin{equation} \label{lemsupp2}
B_{\mathrm c}[g,g;\psi] = B_{\mathrm c}[g\vphi,g\vphi;\psi] + 2 B_{\mathrm c}[g\vphi, g(1-\vphi);\psi] + B_{\mathrm c}[g(1-\vphi),g(1-\vphi);\psi] \,.
\end{equation}
The measure $g\vphi$ is supported on the union of intervals $\bigcup_{n\in\Z}(n+I)$. To compute the term $B_{\mathrm c}[g\vphi,g\vphi;\psi]$ it is then sufficient to consider values $y,z\in\bigcup_{n\in\Z}(n+I)$, since otherwise the corresponding contribution would vanish.
By \eqref{suppK} we integrate on the region $\{ |y-z|<\e_0 \}$, otherwise $K(2^y,2^z)$ would vanish; then the assumption \eqref{delta0a} gives that different intervals $(n+I)$ and $(m+I)$ do not interact, and we can assume that $y,z$ always belong to the same interval $(n+I)$:
\begin{multline} \label{lemsupp6}
B_{\mathrm c}[g\vphi,g\vphi;\psi] = \frac{\ln2}{2}\sum_{n\in\Z}\int_{n+I}\int_{n+I} K(2^y,2^z) g(y,t)g(z,t) \\ \times\biggl[(2^y+2^z)\vphi\Bigl(\frac{\ln(2^y+2^z)}{\ln2}\Bigr) - 2^y\vphi(y)- 2^z\vphi(z) \biggr] \de y \de z \,.
\end{multline}
For $y,z\in(n+I)$ we have $\vphi(y)=\vphi(z)=1$. We now claim that the following implication holds:
\begin{equation} \label{claimsupp}
y,z\in (n+I) \quad\Longrightarrow\quad \frac{\ln(2^y+2^z)}{\ln2} \in (n+1+I) \,.
\end{equation}
Indeed by elementary computations
\begin{align*}
\frac{\ln(2^y+2^z)}{\ln2} = \frac{\ln\bigl(2(2^{y-1}+2^{z-1})\bigr)}{\ln2} = 1 + \frac{\ln(2^{y-1}+2^{z-1})}{\ln2}\,,
\end{align*}
and by monotonicity it is easily seen that $\frac{\ln(2^{y-1}+2^{z-1})}{\ln2}\in (n+I)$ whenever $y,z\in (n+I)$.
From the previous considerations it follows that the term in brackets in the integral \eqref{lemsupp6} vanishes identically. Therefore $B_{\mathrm c}[g\vphi,g\vphi;\psi]=0$. Similarly for the fragmentation term
\begin{equation*}
B_{\mathrm f}[g;\psi] = 
2\int_{\R}\frac{\gamma(2^{y+1})}{4} g(y+1,t)2^y \bigl[\vphi(y+1)-\vphi(y)\bigr]\de y = 0 \,.
\end{equation*}
Therefore by \eqref{lemsupp2} we have
\begin{equation} \label{lemsupp3}
\frac{\de}{\de t} M_{\mathrm{ext}}(t) = - \frac{\de}{\de t} M_{\mathrm{int}}(t)
= - 2 B_{\mathrm c}[g\vphi, g(1-\vphi);\psi] - B_{\mathrm c}[g(1-\vphi),g(1-\vphi);\psi] \,.
\end{equation}

We now bound the right-hand side of \eqref{lemsupp3} in terms of $M_{\mathrm{ext}}(t)$: using \eqref{estK},
\begin{equation*}
\begin{split}
\big|2 B_{\mathrm c}[g\vphi, & g(1-\vphi);\psi]\big| \\
& \leq \ln2\sum_{n\in\Z}\int_{[n,n+1)} \de z\, g(z,t)(1-\vphi(z)) \int_{\{ |y-z|<1\}} K(2^y,2^z)g(y,t)(2^y+2^z)\de y \\
& \leq C_K \ln2\sum_{n\in\Z}\int_{[n,n+1)} \de z\, g(z,t)(1-\vphi(z)) \int_{\{ |y-z|<1\}} (1+2^y2^z)g(y,t)\de y \\ 
& \leq C_K' \sum_{n<0}\int_{[n,n+1)} \de z\, g(z,t)(1-\vphi(z)) \int_{(n-1,n+2)} g(y,t)\de y \\ 
& \qquad + C_K' \sum_{n\geq0}\int_{[n,n+1)} \de z\, 2^zg(z,t)(1-\vphi(z)) \int_{(n-1,n+2)} 2^yg(y,t)\de y \\ 
& \leq C_K'' \|g(t)\| \sum_{n<0}\int_{[n,n+1)} 2^n g(z,t)(1-\vphi(z))\de z \\
& \qquad + C_K'' M \sum_{n\geq0}\int_{[n,n+1)} 2^{z}g(z,t)(1-\vphi(z))\de z \\
& \leq C_K''' \Bigl( \|g(t)\| + M \Bigr) M_{\mathrm{ext}}(t)
\end{split}
\end{equation*}
(where the norm $\|\cdot\|$ is defined in \eqref{moment0}).
The term $|B_{\mathrm c}[g(1-\vphi),g(1-\vphi);\psi]|$ can be bounded in a similar way.
We then deduce, from \eqref{lemsupp3} and from the uniform bound \eqref{wpestg} on the solution, that $\frac{\de}{\de t} M_{\mathrm{ext}}(t) \leq C M_{\mathrm{ext}}(t)$. Now the assumption on the support of $g_0$ implies that  $M_{\mathrm{ext}}(0)=0$. It then follows by Gr\"onwall's Lemma that $M_{\mathrm{ext}}(t)\equiv0$ for all $t>0$, that is, $g(t)$ remains supported in $\bigcup_{n\in\Z}(n+I)$.
\end{proof}

%%%%%%%%%%%%%%%%%%%%%%%%%%%%%%%%%%%%%%%%%%%%%%%%%%%%%%%%%%%%%%%%%%
%%%%%%%%%%%%%%%%%%%%%%%%%%%%%%%%%%%%%%%%%%%%%%%%%%%%%%%%%%%%%%%%%%

\subsection{Derivation of the moment equations} \label{subsect:moments}
We now assume that $g$ is a weak solution to \eqref{eq:coagfrag2} with initial datum $g_0$, and we derive three evolution equations for the quantities $m_n(t)$, $p_n(t)$, $q_n(t)$, introduced in \eqref{mn}, \eqref{pn}, \eqref{qn}, by using the test functions
\begin{equation*}
\vphi_n^0(x) = \chi_{I_n}(x)\,, \quad \vphi^1_n(x) := (x-n)\chi_{I_n}(x)\,, \quad \vphi^2_n(x;t) := (x-n-p_n(t))^2\chi_{I_n}(x)
\end{equation*}
in the weak formulation \eqref{weakg} of the equation. In the first case, by using the condition \eqref{suppK} and the implication \eqref{claimsupp} we find
\begin{equation} \label{mneq}
\begin{split}
\frac{\de m_n}{\de t}
&= B_{\mathrm c}[g(\cdot,t),g(\cdot,t);\vphi_n^0] - B_{\mathrm f}[g(\cdot,t);\vphi_n^0] \\
& = \frac{\ln2}{2}\int_{I_{n-1}}\int_{I_{n-1}} K(2^y,2^z)g(y)g(z)\de y\de z - \ln2\int_{I_n}\int_{I_n} K(2^y,2^z)g(y)g(z)\de y\de z \\
& \qquad - \frac14\int_{I_n}\gamma(2^y)g(y)\de y + \frac12\int_{I_{n+1}}\gamma(2^y)g(y)\de y \,.
\end{split}
\end{equation}
Similarly for $p_n(t)$ we have
\begin{equation*}
\begin{split}
\frac{\de}{\de t}\bigl(m_n p_n\bigr)
&= B_{\mathrm c}[g(\cdot,t),g(\cdot,t);\vphi_n^1] - B_{\mathrm f}[g(\cdot,t);\vphi_n^1] \\
& = \frac{\ln2}{2}\int_{I_{n-1}}\int_{I_{n-1}} K(2^y,2^z)g(y)g(z) \biggl(\frac{\ln(2^y+2^z)}{\ln 2}-n\biggr)\de y\de z \\
& \qquad - \ln2\int_{I_n}\int_{I_n} K(2^y,2^z)g(y)g(z)(y-n)\de y\de z \\
& \qquad - \frac14\int_{I_n}\gamma(2^y)g(y)(y-n)\de y + \frac12\int_{I_{n+1}}\gamma(2^y)g(y)(y-(n+1))\de y \,.
\end{split}
\end{equation*}
By writing the right-hand side as $m_n'p_n+m_np_n'$ and using the equation \eqref{mneq} for $m_n$ we therefore have
\begin{align} \label{pneq}
\frac{\de p_n}{\de t}
& = \frac{1}{m_n} \biggl[ \frac{\ln2}{2}\int_{I_{n-1}}\int_{I_{n-1}} K(2^y,2^z)g(y)g(z) \biggl(\frac{\ln(2^y+2^z)}{\ln 2}-n-p_n\biggr)\de y\de z \nonumber \\
& \qquad - \ln2\int_{I_n}\int_{I_n} K(2^y,2^z)g(y)g(z)\bigl(y-n-p_n\bigr)\de y\de z \\
& \qquad - \frac14\int_{I_n}\gamma(2^y)g(y) \bigl(y-n-p_n\bigr)\de y + \frac12\int_{I_{n+1}}\gamma(2^y)g(y) \bigl(y-(n+1)-p_n\bigr)\de y \biggr]\,. \nonumber
\end{align}
Finally we compute the evolution equation for $q_n(t)$:
\begin{equation*}
\begin{split}
\frac{\de}{\de t}\bigl(m_n q_n \bigr)
&= B_{\mathrm c}[g(\cdot,t),g(\cdot,t);\vphi_n^2(\cdot;t)] - B_{\mathrm f}[g(\cdot,t);\vphi_n^2(\cdot;t)] \\
& = \frac{\ln2}{2}\int_{I_{n-1}}\int_{I_{n-1}} K(2^y,2^z)g(y)g(z) \biggl(\frac{\ln(2^y+2^z)}{\ln 2}-n-p_n\biggr)^2\de y\de z \\
& \qquad - \ln2\int_{I_n}\int_{I_n} K(2^y,2^z)g(y)g(z) \bigl(y-n-p_n\bigr)^2 \de y\de z \\
& \qquad - \frac14\int_{I_n}\gamma(2^y)g(y)\bigl(y-n-p_n\bigr)^2\de y + \frac12\int_{I_{n+1}}\gamma(2^y)g(y)\bigl(y-(n+1)-p_n\bigr)^2\de y \,,
\end{split}
\end{equation*}
which gives, after using the equation \eqref{mneq} for $m_n'(t)$,
\begin{equation}  \label{qneq}
\begin{split}
\frac{\de q_n}{\de t}
&= \frac{1}{m_n}\biggl[
\frac{\ln2}{2}\int_{I_{n-1}}\int_{I_{n-1}} K(2^y,2^z)g(y)g(z) \biggl( \biggl(\frac{\ln(2^y+2^z)}{\ln 2}-n-p_n\biggr)^2 - q_n\biggr)\de y\de z \\
& \qquad - \ln2\int_{I_n}\int_{I_n} K(2^y,2^z)g(y)g(z) \Bigl( \bigl(y-n-p_n\bigr)^2 -q_n \Bigr) \de y\de z \\
& \qquad - \frac14\int_{I_n}\gamma(2^y)g(y) \Bigl( \bigl(y-n-p_n\bigr)^2 - q_n \Bigr) \de y \\
& \qquad + \frac12\int_{I_{n+1}}\gamma(2^y)g(y) \Bigl( \bigl(y-(n+1)-p_n\bigr)^2 - q_n \Bigr) \de y \biggr] \,.
\end{split}
\end{equation}

%%%%%%%%%%%%%%%%%%%%%%%%%%%%%%%%%%%%%%%%%%%%%%%%%%%%%%%%%%%%%%%%%%
%%%%%%%%%%%%%%%%%%%%%%%%%%%%%%%%%%%%%%%%%%%%%%%%%%%%%%%%%%%%%%%%%%

\subsection{Approximate equations} \label{subsect:momentsapprox}
We next identify the leading order terms in the equations \eqref{mneq}, \eqref{pneq}, \eqref{qneq}.
In the following computations we will omit the dependence on the variable $t$.

\begin{lemma} \label{lem:taylor}
For all $h\in\Z$ the following identities hold:
\begin{equation} \label{approx1a}
\int_{I_h}\int_{I_h} K(2^y,2^z)g(y)g(z) \de y\de z = \frac{k(2^{h+p_h})}{2^{h+p_h+1}} m_h^2 (1+O(q_h)),
\end{equation}
\begin{equation} \label{approx1b}
\int_{I_h} \gamma(2^y)g(y)\de y = \gamma(2^{h+p_h}) m_h (1+O(q_h)),
\end{equation}
\begin{equation} \label{approx2a}
\int_{I_h}\int_{I_h} K(2^y,2^z)g(y)g(z)(y-h-p_h)\de y\de z = \frac{k(2^{h+p_h})}{2^{h+p_h+1}} m_h^2 O(q_h),
\end{equation}
\begin{equation} \label{approx2b}
\int_{I_h} \gamma(2^y)g(y)(y-h-p_h)\de y = \gamma(2^{h+p_h})m_hO(q_h),
\end{equation}
\begin{equation} \label{approx3a}
\int_{I_h}\int_{I_h} K(2^y,2^z)g(y)g(z) \bigl( (y-h-p_h)^2-q_h \bigr) \de y\de z = \frac{k(2^{h+p_h})}{2^{h+p_h+1}} \delta_0 m_h^2O(q_h),
\end{equation}
\begin{equation} \label{approx3b}
\int_{I_h} \gamma(2^y)g(y) \bigl( (y-h-p_h)^2 -q_h \bigr) \de y = \gamma(2^{h+p_h}) \delta_0 m_hO(q_h),
\end{equation}
where the notation $f_1=O(f_2)$ means that there exists a constant $C$ (depending only on the kernels) such that $|f_1|\leq C f_2$.
\end{lemma}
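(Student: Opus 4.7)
The strategy is a Taylor expansion of the smooth coefficients $\gamma(2^y)$ and $K(2^y,2^z)$ around the barycentre $y=z=h+p_h$ of the peak, combined with the moment identities built into the definitions of $m_h,p_h,q_h$. Concretely, from \eqref{mn}, \eqref{pn2}, \eqref{qn} we have
\begin{equation*}
\int_{I_h} g(y)\de y = m_h, \qquad \int_{I_h}(y-h-p_h) g(y)\de y = 0, \qquad \int_{I_h}(y-h-p_h)^2 g(y)\de y = m_h q_h,
\end{equation*}
and since $|y-h-p_h|\leq 2\delta_0<2$ on $I_h$, all higher moments are controlled by
\begin{equation*}
\Bigl|\int_{I_h}(y-h-p_h)^k g(y)\de y\Bigr| \leq (2\delta_0)^{k-2} m_h q_h \qquad (k\geq 2).
\end{equation*}

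The first step is to establish the scale-invariant derivative bounds
\begin{equation*}
|\gamma'(\xi)|\xi \leq C\gamma(\xi), \qquad |\gamma''(\xi)|\xi^2 \leq C\gamma(\xi),
\end{equation*}
uniformly in $\xi>0$, which follow directly from \eqref{kernel5}--\eqref{kernel5quater} by distinguishing $\xi\geq 1$ and $\xi\leq 1$; the analogous estimates for $k$ come from \eqref{kernel2}--\eqref{kernel2quater}. Composing with $\xi=2^y$ and using boundedness of $Q$, $Q'$, $Q''$ from \eqref{kernel3}, these translate into Taylor expansions of the form
\begin{equation*}
\gamma(2^y) = \gamma(2^{h+p_h})\bigl[1 + a_h(y-h-p_h) + r_h(y)\bigr], \qquad |a_h|\leq C, \quad |r_h(y)|\leq C(y-h-p_h)^2,
\end{equation*}
and analogously a first/second-order expansion of $K(2^y,2^z)$ around $(h+p_h,h+p_h)$, whose base value $K(2^{h+p_h},2^{h+p_h})=k(2^{h+p_h})/2^{h+p_h+1}$ is exactly the prefactor appearing in \eqref{approx1a}, \eqref{approx2a}, \eqref{approx3a}; the symmetry $Q(-\xi)=Q(\xi)$ guarantees that this expansion is symmetric in $y\leftrightarrow z$.

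The second step is to substitute these expansions into each of the six integrals and read off the leading order. For \eqref{approx1a}, \eqref{approx1b} the zeroth-order term gives the stated leading order, the first-order term vanishes by $\int(y-h-p_h)g=0$ (applied in each variable for the double integral), and the quadratic remainder contributes $O(m_h^2 q_h)$ or $O(m_h q_h)$ with the prefactors indicated. For \eqref{approx2a}, \eqref{approx2b} the zeroth-order term now vanishes by the same moment identity, and the first-order term produces exactly $O(m_h^2 q_h)$ resp.\ $O(m_h q_h)$; the quadratic remainder multiplied by an extra $(y-h-p_h)$ is even smaller (in fact $O(\delta_0 m_h q_h)$). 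For \eqref{approx3a}, \eqref{approx3b} both the zeroth order (by the definition of $q_h$) and the first order (by oddness) vanish, so the estimate hinges on the third moment $\int(y-h-p_h)^3 g$, which by the bound above is $O(\delta_0 m_h q_h)$; the quadratic remainder, which is $O((y-h-p_h)^2)$, times $((y-h-p_h)^2-q_h)$, which is $O(\delta_0^2)$ pointwise, likewise contributes at most $O(\delta_0 m_h q_h)$, yielding the extra $\delta_0$ factor in \eqref{approx3a}--\eqref{approx3b}.

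I do not expect a real obstacle: the only bookkeeping that requires some care is the double integral, where one must verify that the mixed first-order term $(y-h-p_h)(z-h-p_h)$ integrated against $g(y)g(z)$ vanishes by \eqref{pn2} applied separately in $y$ and $z$, and that all remainders retain the common prefactor $k(2^{h+p_h})/2^{h+p_h+1}$ thanks to the scale-invariant derivative bounds. Everything else reduces to explicit moment arithmetic.
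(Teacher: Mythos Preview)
Your proposal is correct and follows essentially the same approach as the paper: both proceed by a second-order Taylor expansion of $\gamma(2^y)$ and $K(2^y,2^z)$ around the barycentre $h+p_h$, use the scale-invariant derivative bounds \eqref{kernel2ter}--\eqref{kernel2quater} and \eqref{kernel5ter}--\eqref{kernel5quater} to control the remainder by the function itself, and then reduce each integral to moment arithmetic via \eqref{pn2} and the definition of $q_h$. The only minor comment is that your phrasing ``the first order (by oddness) vanishes'' for \eqref{approx3a}--\eqref{approx3b} is slightly imprecise: the linear term of the expansion multiplied by $((y-h-p_h)^2-q_h)$ does not vanish entirely but contributes exactly the third moment you then invoke; the part that vanishes by \eqref{pn2} is only the $-q_h\cdot a_h\int(y-h-p_h)g$ piece.
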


\begin{proof}
By a Taylor expansion of the function $K(2^y,2^z)$ at the point $(y,z)=(h+p_h,h+p_h)$ we have, for $y,z\in I_h$,
\begin{multline} \label{taylorK}
K(2^y,2^z)
= K(2^{h+p_h},2^{h+p_h}) + \frac{\partial K}{\partial\xi}(2^{h+p_h},2^{h+p_h})2^{h+p_h}\ln2 \bigl[(y-h-p_h)+(z-h-p_h)\bigr] \\
+ O\Bigl( K(2^{h+p_h},2^{h+p_h}) \bigl[(y-h-p_h)^2 + (z-h-p_h)^2\bigr] \Bigr)
\end{multline}
(here we used the fact that the quadratic term can be controlled in terms of the function $K$ itself, thanks to the growth assumptions \eqref{kernel2ter}--\eqref{kernel2quater} on the derivatives of the kernel).
Then, inserting this expression into the integral \eqref{approx1a}, and observing that in view of \eqref{pn2} the linear term vanishes, we obtain
\begin{multline*}
\int_{I_h}\int_{I_h} K(2^y,2^z)g(y)g(z) \de y\de z
= K(2^{h+p_h},2^{h+p_h}) \int_{I_h}g(y)\de y\int_{I_h} g(z)\de z \\
+ O\biggl( K(2^{h+p_h},2^{h+p_h}) \int_{I_h}\int_{I_h}g(y)g(z) (y-h-p_h)^2 \de y\de z \biggr),
\end{multline*}
from which \eqref{approx1a} follows by using the expression of the coagulation kernel in \eqref{kernel1}.
The equalities \eqref{approx2a} and \eqref{approx3a} are obtained similarly by inserting the Taylor expansion \eqref{taylorK} into the integrals, and recalling \eqref{pn2}.

The identities \eqref{approx1b}, \eqref{approx2b}, \eqref{approx3b} can be proved by analogous arguments, using the Taylor expansion of the fragmentation kernel, for $y\in I_h$,
\begin{equation} \label{taylorgamma}
\gamma(2^y) = \gamma(2^{h+p_h}) + \gamma'(2^{h+p_h})2^{h+p_h}\ln2(y-h-p_h) + O\bigl(\gamma(2^{h+p_h})(y-h-p_h)^2\bigr)
\end{equation}
(also in this case the quadratic term is controlled in terms of the function $\gamma$ itself thanks to the growth assumptions \eqref{kernel5ter}--\eqref{kernel5quater} on the derivatives fo $\gamma$).
\end{proof}

\begin{lemma} \label{lem:approx}
The functions $m_n(t)$, $p_n(t)$, $q_n(t)$, introduced in \eqref{mn}, \eqref{pn}, and \eqref{qn} respectively, obey the following equations:
\begin{equation}\label{mneq2}
\begin{split}
\frac{\de m_n}{\de t}
&= \frac{\gamma(2^{n+p_n})}{4}\Bigl( \zeta_{n-1}(p) m_{n-1}^2(1+O(q_{n-1})) - m_n(1+O(q_n)) \Bigr) \\
& \qquad - \frac{\gamma(2^{n+1+p_{n+1}})}{2} \Bigl( \zeta_n(p) m_n^2(1+O(q_n)) - m_{n+1}(1+O(q_{n+1})) \Bigr) .
\end{split}
\end{equation}
%\begin{equation} \label{pneq2}
%\begin{split}
%\frac{\de p_n}{\de t}
% & = \frac{\ln2}{2}\frac{k(2^{n-1+p_{n-1}})}{2^{n+p_{n-1}}}\frac{m_{n-1}^2}{m_n} \Bigl( (p_{n-1}-p_n) (1+O(q_{n-1})) + O(q_{n-1}) \Bigr) \\
% & \qquad + \frac{\gamma(2^{n+1+p_{n+1}})}{2}\frac{m_{n+1}}{m_n} \Bigl( (p_{n+1}-p_n)(1+O(q_{n+1})) +O(q_{n+1}) \Bigr) \\
% & \qquad -\ln2\frac{k(2^{n+p_n})}{2^{n+p_n+1}}m_nO(q_n) -\frac{\gamma(2^{n+p_n})}{4}O(q_n) ,
%\end{split}
%\end{equation}
\begin{equation} \label{pneq2}
\begin{split}
\frac{\de p_n}{\de t}
& = \frac{\ln2}{2}\frac{k(2^{n-1+p_{n-1}})}{2^{n+p_{n-1}}}\frac{m_{n-1}^2}{m_n} \Bigl( (p_{n-1}-p_n) + O(q_{n-1}) \Bigr) -\ln2\frac{k(2^{n+p_n})}{2^{n+p_n+1}}m_nO(q_n) \\
& \qquad -\frac{\gamma(2^{n+p_n})}{4}O(q_n) + \frac{\gamma(2^{n+1+p_{n+1}})}{2}\frac{m_{n+1}}{m_n} \Bigl( (p_{n+1}-p_n) + O(q_{n+1}) \Bigr),
\end{split}
\end{equation}
%\begin{equation} \label{qneq2}
%\begin{split}
%\frac{\de q_n}{\de t}
% &= \frac{\ln2}{2}\frac{k(2^{n-1+p_{n-1}})}{2^{n+p_{n-1}}}\frac{m_{n-1}^2}{m_n} \Bigl[ \Bigl(\frac12q_{n-1}-q_n\Bigr)(1+O(q_{n-1})) \\
% & \qquad\qquad + O(q_{n-1}) + (p_{n-1}-p_n)O(q_{n-1}) + (p_{n-1}-p_n)^2(1+O(q_{n-1}))\Bigr]\\
% & \qquad + \frac{\gamma(2^{n+1+p_{n+1}})}{2}\frac{m_{n+1}}{m_n} \Bigl[ (q_{n+1}-q_n)(1+O(q_{n+1})) \\
% & \qquad\qquad + (p_{n+1}-p_n)^2(1+O(q_{n+1})) + (p_{n+1}-p_n)O(q_{n+1}) + O(q_{n+1}) \Bigr] \\
% & \qquad -\ln2\frac{k(2^{n+p_n})}{2^{n+p_n+1}}m_nO(q_n) - \frac{\gamma(2^{n+p_{n}})}{4}O(q_n).
%\end{split}
%\end{equation}
\begin{equation} \label{qneq2}
\begin{split}
\frac{\de q_n}{\de t}
&= \frac{\ln2}{2}\frac{k(2^{n-1+p_{n-1}})}{2^{n+p_{n-1}}}\frac{m_{n-1}^2}{m_n} \Bigl[ \Bigl(\frac12q_{n-1}-q_n\Bigr) +\delta_0O(q_{n-1}) + (p_{n-1}-p_n)^2 \Bigr]\\
& \qquad -\ln2\frac{k(2^{n+p_n})}{2^{n+p_n+1}}m_n\delta_0 O(q_n) - \frac{\gamma(2^{n+p_{n}})}{4}\delta_0O(q_n) \\
& \qquad + \frac{\gamma(2^{n+1+p_{n+1}})}{2}\frac{m_{n+1}}{m_n} \Bigl[ (q_{n+1}-q_n) + \delta_0O(q_{n+1}) + (p_{n+1}-p_n)^2 \Bigr] .
\end{split}
\end{equation}
\end{lemma}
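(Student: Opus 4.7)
The plan is to substitute the quadrature identities of Lemma~\ref{lem:taylor} into the exact evolution equations \eqref{mneq}--\eqref{qneq} and, for the coagulation gain terms (in which the argument $\frac{\ln(2^y+2^z)}{\ln 2}$ produces a nontrivial shift from the peak $I_{n-1}$ to the peak $I_n$), to supplement them with a Taylor expansion of this logarithm around $(n-1+p_{n-1},\,n-1+p_{n-1})$. Two algebraic rewrites drive the conversion of the raw prefactors into the form displayed on the right-hand sides of \eqref{mneq2}--\eqref{qneq2}: directly from the definition \eqref{nearlystat1} of $\zeta_n(p)$ one checks
\begin{equation*}
\frac{\ln 2}{2}\cdot\frac{k(2^{n-1+p_{n-1}})}{2^{n+p_{n-1}}}
= \frac{\gamma(2^{n+p_n})}{4}\,\zeta_{n-1}(p),
\qquad
\ln 2\cdot\frac{k(2^{n+p_n})}{2^{n+p_n+1}}
= \frac{\gamma(2^{n+1+p_{n+1}})}{2}\,\zeta_n(p).
\end{equation*}

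With these identities, \eqref{mneq2} follows by inserting \eqref{approx1a} with $h=n-1,n$ into the two coagulation integrals of \eqref{mneq} and \eqref{approx1b} with $h=n,n+1$ into the two fragmentation integrals. For \eqref{pneq2} the only new input is the expansion, valid for $y,z\in I_{n-1}$ with $u:=y-(n-1+p_{n-1})$ and $v:=z-(n-1+p_{n-1})$,
\begin{equation*}
\frac{\ln(2^y+2^z)}{\ln 2} - n - p_n
= (p_{n-1}-p_n) + \tfrac12(u+v) + O(u^2+v^2).
\end{equation*}
Inserted into the coagulation gain integral together with the Taylor expansion \eqref{taylorK} of $K$, the linear term $\tfrac12(u+v)$ produces a vanishing leading contribution by \eqref{pn2}, leaving $(p_{n-1}-p_n)$ at leading order plus an $O(q_{n-1})$ remainder. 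The coagulation loss and first fragmentation integrals contribute pure $O(q_n)$ remainders via \eqref{approx2a}--\eqref{approx2b}, and for the fragmentation gain one splits $y-(n+1)-p_n=(y-(n+1)-p_{n+1})+(p_{n+1}-p_n)$, producing a leading $(p_{n+1}-p_n)$ plus an $O(q_{n+1})$ piece. Dividing by $m_n$ and regrouping yields \eqref{pneq2}.

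The most delicate step is \eqref{qneq2}, where the shift must be squared:
\begin{equation*}
\Bigl(\tfrac{\ln(2^y+2^z)}{\ln 2}-n-p_n\Bigr)^2
= (p_{n-1}-p_n)^2 + (p_{n-1}-p_n)(u+v) + \tfrac14(u+v)^2 + O\bigl(\delta_0(u^2+v^2)\bigr),
\end{equation*}
where the error bound uses $|u|,|v|\leq\delta_0$. Integrating against $K(2^y,2^z)g(y)g(z)$: the first summand gives the claimed $(p_{n-1}-p_n)^2$ term at leading order (crucially \emph{without} a $\delta_0$ weight); the cross term $(p_{n-1}-p_n)(u+v)$ collapses to an $O(\delta_0 q_{n-1})$ contribution after applying \eqref{taylorK} and \eqref{pn2}; and $\tfrac14(u+v)^2$ contributes exactly $\tfrac12 q_{n-1}$ at leading order, because $\int_{I_{n-1}}\!\int_{I_{n-1}} uv\,g(y)g(z)\,\de y\,\de z=0$ by \eqref{pn2}. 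Subtracting $q_n$ gives the $-q_n$ drift, and the higher-order pieces are absorbed into $\delta_0\,O(q_{n-1})$ by \eqref{approx3a}. The middle coagulation loss and first fragmentation integrals are handled directly by \eqref{approx3a}--\eqref{approx3b} with $h=n$; the fragmentation gain is split as in the $p_n$ analysis, yielding the drift $q_{n+1}-q_n$, the surviving $(p_{n+1}-p_n)^2$, and an $O(q_{n+1})$ remainder from \eqref{approx3b}, with the mixed term $2(p_{n+1}-p_n)(y-(n+1)-p_{n+1})$ integrating to zero by \eqref{pn2}.

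The principal technical obstacle is the bookkeeping for \eqref{qneq2}: one must simultaneously extract the clean drift contributions $\tfrac12 q_{n-1}$, $-q_n$, $q_{n+1}-q_n$, the \emph{unweighted} squared-increment forcings $(p_{n\pm 1}-p_n)^2$, and the $\delta_0\,O(q_h)$ residuals, and verify that every other cross term dies by the first-moment cancellation \eqref{pn2}. It is this cancellation that prevents the squared-difference terms from being contaminated by a spurious $O(\delta_0)$ factor, and that separation of scales is precisely what the later linearized analysis will exploit.
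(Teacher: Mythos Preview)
Your proof is correct and follows the same approach as the paper: substitute the quadrature identities of Lemma~\ref{lem:taylor} into the exact equations \eqref{mneq}--\eqref{qneq}, supplemented by the Taylor expansion $\frac{\ln(2^y+2^z)}{\ln2}=1+\tfrac{y+z}{2}+O(u^2+v^2)$ for the coagulation gain. One small correction: in the fragmentation gain term of \eqref{qneq2}, the mixed contribution $2(p_{n+1}-p_n)\int_{I_{n+1}}\gamma(2^y)g(y)\bigl(y-(n+1)-p_{n+1}\bigr)\de y$ does \emph{not} vanish by \eqref{pn2}, since the weight is $\gamma(2^y)g(y)$ rather than $g(y)$; it is instead $\gamma(2^{n+1+p_{n+1}})m_{n+1}O(q_{n+1})$ by \eqref{approx2b}, and after multiplication by $(p_{n+1}-p_n)=O(\delta_0)$ it is absorbed into the $\delta_0\,O(q_{n+1})$ remainder, so your conclusion stands.
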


\begin{proof}
By inserting \eqref{approx1a} and \eqref{approx1b} into \eqref{mneq} we obtain the equation
\begin{equation*}
\begin{split}
\frac{\de m_n}{\de t} &= \frac{\ln2}{2}\frac{k(2^{n-1+p_{n-1}})}{2^{n+p_{n-1}}} m_{n-1}^2(1+O(q_{n-1})) - \ln2\frac{k(2^{n+p_n})}{2^{n+p_n+1}}m_n^2(1+O(q_n))\\
& \qquad -\frac{\gamma(2^{n+p_n})}{4}m_n(1+O(q_n)) + \frac{\gamma(2^{n+1+p_{n+1}})}{2}m_{n+1}(1+O(q_{n+1})) ,
\end{split}
\end{equation*}
which can be rewritten in the form \eqref{mneq2} by using the coefficients $\zeta_n(p)$ introduced in \eqref{nearlystat1}.

To prove \eqref{pneq2}, we first observe that by Taylor expansion, for all $h\in\Z$ and $y,z\in I_{h}$
\begin{align} \label{taylorlog}
\frac{\ln(2^y+2^z)}{\ln 2}
& = \frac{1}{\ln2}\ln\Bigl( 2^{h+p_h} \Bigl[ 2+\ln2(y-h-p_h)+\ln2(z-h-p_h) \nonumber\\
& \qquad\qquad + O\bigl( (y-h-p_h)^2+(z-h-p_h)^2 \bigr) \Bigr] \Bigr) \nonumber \\
% & = h + p_h + \frac{1}{\ln2}\ln\Bigl[ 2+\ln2(y-h-p_h)+\ln2(z-h-p_h) +O\bigl((y-h-p_h)^2+(z-h-p_h)^2\bigr) \Bigr] \\
& = h + p_h + 1 + \frac{1}{2}\bigl(y-h-p_h+z-h-p_h\bigr) + O\bigl((y-h-p_h)^2+(z-h-p_h)^2\bigr) \nonumber \\
& = 1 + \frac{y+z}{2} + O\bigl((y-h-p_h)^2+(z-h-p_h)^2\bigr).
\end{align}
The first term in \eqref{pneq} becomes, using the symmetry of the kernel, \eqref{taylorlog}, \eqref{approx1a}, and \eqref{approx2a},
\begin{equation*}
\begin{split}
\int_{I_{n-1}}&\int_{I_{n-1}} K(2^y,2^z)g(y)g(z) \biggl(\frac{\ln(2^y+2^z)}{\ln 2}-n-p_n\biggr)\de y\de z \\
& = \int_{I_{n-1}}\int_{I_{n-1}} K(2^y,2^z)g(y)g(z) \Bigl( y-(n-1)-p_n + O\bigl((y-(n-1)-p_{n-1})^2\bigr) \Bigr)\de y\de z \\
% & = \int_{I_{n-1}}\int_{I_{n-1}}K(2^y,2^z)g(y)g(z)(p_{n-1}-p_n)\de y\de z \\
% & \qquad + \int_{I_{n-1}}\int_{I_{n-1}} K(2^y,2^z)g(y)g(z) \Bigl( y-(n-1)-p_{n-1} + O\bigl((y-(n-1)-p_{n-1})^2\bigr) \Bigr)\de y\de z \\
& = \frac{k(2^{n-1+p_{n-1}})}{2^{n+p_{n-1}}}m_{n-1}^2 (p_{n-1}-p_n)(1+O(q_{n-1})) + \frac{k(2^{n-1+p_{n-1}})}{2^{n+p_{n-1}}}m_{n-1}^2O(q_{n-1}) .
\end{split}
\end{equation*}
For the other terms in \eqref{pneq} we can use directly \eqref{approx1b}, \eqref{approx2a}, and \eqref{approx2b}: we obtain
\begin{equation*}
\begin{split}
m_n\frac{\de p_n}{\de t}
& = \frac{\ln2}{2}\frac{k(2^{n-1+p_{n-1}})}{2^{n+p_{n-1}}}m_{n-1}^2 \Bigl( (p_{n-1}-p_n) (1+O(q_{n-1})) + O(q_{n-1}) \Bigr) \\
& \qquad -\ln2\frac{k(2^{n+p_n})}{2^{n+p_n+1}}m_n^2O(q_n) -\frac{\gamma(2^{n+p_n})}{4}m_nO(q_n) \\
& \qquad + \frac{\gamma(2^{n+1+p_{n+1}})}{2} m_{n+1} \Bigl( (p_{n+1}-p_n)(1+O(q_{n+1})) +O(q_{n+1}) \Bigr) ,
\end{split}
\end{equation*}
from which \eqref{pneq2} follows.

It remains to prove the equation \eqref{qneq2} for $q_n$, starting from \eqref{qneq}. The first term in \eqref{qneq} becomes, using \eqref{taylorlog}, the symmetry of the kernel, \eqref{approx3a}, \eqref{approx2a}, and \eqref{approx1a},
\begin{equation*}
\begin{split}
\int_{I_{n-1}}&\int_{I_{n-1}} K(2^y,2^z)g(y)g(z) \biggl( \biggl(\frac{\ln(2^y+2^z)}{\ln 2}-n-p_n\biggr)^2 - q_n\biggr)\de y\de z \\
& = \int_{I_{n-1}}\int_{I_{n-1}} K(2^y,2^z)g(y)g(z) \biggl[ \biggl( \frac{y+z}{2} - (n-1) - p_n \\
& \qquad\qquad + O\bigl( (y-(n-1)-p_{n-1})^2+(z-(n-1)-p_{n-1})^2 \bigr) \biggr)^2 - q_n\biggr] \de y\de z \\
& = \Bigl(\frac12q_{n-1}-q_n\Bigr)\int_{I_{n-1}}\int_{I_{n-1}} K(2^y,2^z)g(y)g(z)\de y\de z \\
& \qquad + \frac12\int_{I_{n-1}}\int_{I_{n-1}} K(2^y,2^z)g(y)g(z) \Bigl( (y-(n-1)-p_{n-1})^2-q_{n-1} \Bigr) \de y \de z \\
& \qquad + \frac12\int_{I_{n-1}}\int_{I_{n-1}} K(2^y,2^z)g(y)g(z)\bigl(y-(n-1)-p_{n-1}\bigr)\bigl(z-(n-1)-p_{n-1}\bigr) \de y \de z \\
& \qquad + 2(p_{n-1}-p_n)\int_{I_{n-1}}\int_{I_{n-1}} K(2^y,2^z)g(y)g(z)\bigl(y-(n-1)-p_{n-1}\bigr) \de y \de z \\
& \qquad + (p_{n-1}-p_n)^2\int_{I_{n-1}}\int_{I_{n-1}} K(2^y,2^z)g(y)g(z)\de y \de z + \frac{k(2^{n-1+p_{n-1}})}{2^{n+p_{n-1}}}m_{n-1}^2\delta_0O(q_{n-1})\\
& = \frac{k(2^{n-1+p_{n-1}})}{2^{n+p_{n-1}}}m_{n-1}^2 \biggl[ \Bigl(\frac12q_{n-1}-q_n\Bigr)(1+O(q_{n-1})) + \delta_0 O(q_{n-1}) \\
& \qquad\qquad +(p_{n-1}-p_n)O(q_{n-1}) + (p_{n-1}-p_n)^2(1+O(q_{n-1}))\biggr].
\end{split}
\end{equation*}
For the second and third term in \eqref{qneq} we can use directly \eqref{approx3a} and \eqref{approx3b}, while for the last term in \eqref{qneq} we have, by using \eqref{approx3b}, \eqref{approx1b} and \eqref{approx2b},
\begin{equation*}
\begin{split}
\frac12 & \int_{I_{n+1}}\gamma(2^y)g(y) \Bigl( \bigl(y-(n+1)-p_n\bigr)^2 - q_n \Bigr) \de y \\
& = \frac12\int_{I_{n+1}}\gamma(2^y)g(y)\Bigl( (y-(n+1)-p_n)^2-(y-(n+1)-p_{n+1})^2\Bigr)\de y \\
& \qquad + \frac12\bigl(q_{n+1}-q_n\bigr)\int_{I_{n+1}}\gamma(2^y)g(y)\de y + \frac12 \gamma(2^{n+1+p_{n+1}}) \delta_0 m_{n+1}O(q_{n+1}) \\
& = (p_{n+1}-p_n)\int_{I_{n+1}}\gamma(2^y)g(y)(y-(n+1)-p_{n+1})\de y + \frac12(p_{n+1}-p_n)^2\int_{I_{n+1}}\gamma(2^y)g(y)\de y \\
& \qquad + \frac{\gamma(2^{n+1+p_{n+1}})}{2}m_{n+1} \Bigl( (q_{n+1}-q_n)(1+O(q_{n+1})) + \delta_0 O(q_{n+1}) \Bigr) \\
& = \frac{\gamma(2^{n+1+p_{n+1}})}{2}m_{n+1} \Bigl( (p_{n+1}-p_n)O(q_{n+1}) + (p_{n+1}-p_n)^2(1+O(q_{n+1})) \\
& \qquad + (q_{n+1}-q_n)(1+O(q_{n+1})) + \delta_0 O(q_{n+1}) \Bigr).
\end{split}
\end{equation*}
The equality \eqref{qneq2} follows.
\end{proof}

Suppose now that the coefficients $m_n(t)$ can be represented as perturbations of the functions $\bar{m}_n$, according to \eqref{yn}.
By plugging the expression \eqref{yn} into \eqref{mneq2}, and using \eqref{nearlystat1}, \eqref{nearlystat3bis}, and \eqref{nearlystat6}, we find
\begin{align} \label{yneq2}
\frac{\de y_n}{\de t}
& = - \frac{1}{2^n\bar{m}_n}\frac{\partial\bar{m}_n}{\partial A}\frac{\de A}{\de t} (1+2^ny_n)
- \frac{1}{2^n}\sum_{k=-\infty}^\infty \frac{1}{\bar{m}_n}\frac{\partial\bar{m}_n}{\partial p_k}\frac{\de p_k}{\de t}(1+2^ny_n) \nonumber\\
& + \frac{\gamma(2^{n+p_n})}{2^{n+2}}\Bigl( \zeta_{n-1}(p) \frac{\bar{m}_{n-1}^2}{\bar{m}_n}(1+2^{n-1}y_{n-1})^2(1+O(q_{n-1})) - (1+2^ny_n)(1+O(q_n)) \Bigr) \nonumber\\
& - \frac{\gamma(2^{n+1+p_{n+1}})}{2^{n+1}} \Bigl( \zeta_n(p) \bar{m}_n(1+2^ny_n)^2(1+O(q_n)) - \frac{\bar{m}_{n+1}}{\bar{m}_n}(1+2^{n+1}y_{n+1})(1+O(q_{n+1})) \Bigr) \nonumber\\
& = (1+2^ny_n)\frac{\de A}{\de t} - (1+2^ny_n)\frac{1}{2^n}\sum_{k=n}^\infty \frac{1}{\bar{m}_n}\frac{\partial\bar{m}_n}{\partial p_k}\frac{\de p_k}{\de t} \nonumber\\
& + \frac{\gamma(2^{n+p_n})}{2^{n+2}} \biggl[ (1+2^{n-1}y_{n-1})^2(1+O(q_{n-1})) - (1+2^ny_n)(1+O(q_n)) \nonumber\\
& - 4\bar{\mu}_n\frac{\gamma(2^{n+1+p_{n+1}})}{\gamma(2^{n+p_n})} \Bigl( (1+2^ny_n)^2(1+O(q_n)) - (1+2^{n+1}y_{n+1})(1+O(q_{n+1})) \Bigr) \biggr] ,
\end{align}
where $\bar{\mu}_n=\bar{\mu}_n(A(t),p(t))$. Similarly, under the assumption \eqref{yn} we can write the approximate equations \eqref{pneq2}, \eqref{qneq2} for $p_n(t)$ and $q_n(t)$ in the following form:
\begin{multline} \label{pneq3}
\frac{\de p_n}{\de t}
= \frac{\gamma(2^{n+p_n})}{4} \biggl[ \frac{(1+2^{n-1}y_{n-1})^2}{(1+2^ny_n)} \Bigl( (p_{n-1}-p_n) + O(q_{n-1}) \Bigr) + O(q_n) \\
- 4\bar{\mu}_n\frac{\gamma(2^{n+1+p_{n+1}})}{\gamma(2^{n+p_n})} \biggl( \frac{(1+2^{n+1}y_{n+1})}{(1+2^ny_n)}\Bigl( (p_{n}-p_{n+1}) + O(q_{n+1}) \Bigr) + (1+2^ny_n)O(q_n) \biggr)\biggr],
\end{multline}
\begin{align} \label{qneq3}
\frac{\de q_n}{\de t}
& = \frac{\gamma(2^{n+p_n})}{4} \biggl[ \frac{(1+2^{n-1}y_{n-1})^2}{(1+2^ny_n)} \Bigl( \frac{q_{n-1}}{2}-q_n + \delta_0O(q_{n-1}) + (p_{n-1}-p_n)^2 \Bigr) + \delta_0O(q_n) \nonumber \\
& \qquad - 4\bar{\mu}_n\frac{\gamma(2^{n+1+p_{n+1}})}{\gamma(2^{n+p_n})} \frac{(1+2^{n+1}y_{n+1})}{(1+2^ny_n)}\Bigl( (q_{n}-q_{n+1}) + \delta_0O(q_{n+1}) - (p_{n+1}-p_n)^2 \Bigr) \nonumber \\
& \qquad - 4\bar{\mu}_n\frac{\gamma(2^{n+1+p_{n+1}})}{\gamma(2^{n+p_n})} (1+2^ny_n)\delta_0O(q_n) \biggr],
\end{align}
with $\bar{\mu}_n=\bar{\mu}_n(A(t),p(t))$. 
This is the most convenient form of the equations for $p_n$, $q_n$ to prove their desired decay properties.

%%%%%%%%%%%%%%%%%%%%%%%%%%%%%%%%%%%%%%%%%%%%%%%%%%%%%%%%%%%%%%%%%%
%%%%%%%%%%%%%%%%%%%%%%%%%%%%%%%%%%%%%%%%%%%%%%%%%%%%%%%%%%%%%%%%%%
%%%%%%%%%%%%%%%%%%%%%%%%%%%%%%%%%%%%%%%%%%%%%%%%%%%%%%%%%%%%%%%%%%
%%%%%%%%%%%%%%%%%%%%%%%%%%%%%%%%%%%%%%%%%%%%%%%%%%%%%%%%%%%%%%%%%%
%%%%%%%%%%%%%%%%%%%%%%%%%%%%%%%%%%%%%%%%%%%%%%%%%%%%%%%%%%%%%%%%%%
%%%%%%%%%%%%%%%%%%%%%%%%%%%%%%%%%%%%%%%%%%%%%%%%%%%%%%%%%%%%%%%%%%
%%%%%%%%%%%%%%%%%%%%%%%%%%%%%%%%%%%%%%%%%%%%%%%%%%%%%%%%%%%%%%%%%%
%%%%%%%%%%%%%%%%%%%%%%%%%%%%%%%%%%%%%%%%%%%%%%%%%%%%%%%%%%%%%%%%%%
%%%%%%%%%%%%%%%%%%%%%%%%%%%%%%%%%%%%%%%%%%%%%%%%%%%%%%%%%%%%%%%%%%
%%%%%%%%%%%%%%%%%%%%%%%%%%%%%%%%%%%%%%%%%%%%%%%%%%%%%%%%%%%%%%%%%%

\section{The linearized equation}\label{sect:linear}

The main evolution equations that we are going to investigate in the paper, namely \eqref{yneq2}, \eqref{pneq3}, \eqref{qneq3}, can be seen as perturbation of the same linearized problem. The proof of the main result relies therefore on the properties of solutions to the linearized equation, which in its simplest form is
\begin{equation} \label{linear1}
\frac{\de y_n}{\de t} = \frac{\gamma(2^{n+\rho})}{4} \Bigl[ y_{n-1}-y_n - \sigma_n\bigl( y_n-y_{n+1}\bigr) \Bigr] ,
\end{equation}
for given $\rho\in[0,1)$ and coefficients $\sigma_n$ with suitable asymptotic properties as $n\to\pm\infty$.
We studied this linear equation in full details in \cite[Section~5]{BNVd}, with $\sigma_n$ given by
\begin{equation} \label{linear3}
\sigma_n:=4\zeta_{n,\rho}a_n(A_{M,\rho},\rho)\frac{\gamma(2^{n+1+\rho})}{\gamma(2^{n+\rho})}
\end{equation}
(here $\zeta_{n,\rho}$ are defined in \eqref{eq:stat} and $a_n(A_{M,\rho},\rho)$ are the coefficients of a stationary solution with total mass $M$, see Proposition~\ref{prop:stationary}).
We remark that for this analysis the particular form of $\sigma_n$ is not relevant, but what matters is only their asymptotic behaviour, namely $\sigma_n\to8$ as $n\to-\infty$ and $\sigma_n=O(e^{-A_M2^n})$ as $n\to\infty$.

The goal of this section is to extend the linear theory in order to treat the more general case of small time-dependent perturbations of the coefficients in \eqref{linear1}, which appears in the context of this paper.

%%%%%%%%%%%%%%%%%%%%%%%%%%%%%%%%%%%%%%%%%%%%%%%%%%%%%%%%%%%%%%%%%%
%%%%%%%%%%%%%%%%%%%%%%%%%%%%%%%%%%%%%%%%%%%%%%%%%%%%%%%%%%%%%%%%%%

\subsection{The constant coefficients case}
We start by recalling the result proved in \cite{BNVd} for \eqref{linear1}. We will use in the following the notation introduced in \eqref{spacesequences} for the space of sequences $\mathcal{Y}_\theta$ and the norm $\|\cdot\|_\theta$. It is convenient to denote the linear operator on the right-hand side of \eqref{linear1}, acting on a sequence $y=\{y_n\}_{n\in\Z}$, by
\begin{equation} \label{linear4}
\mathscr{L}_n(y) := \frac{\gamma(2^{n})}{4} \Bigl[ y_{n-1}-y_n - \sigma_n\bigl( y_n-y_{n+1}\bigr) \Bigr], 
\qquad
\mathscr{L}(y) := \{ \mathscr{L}_n(y) \}_{n\in\Z}
\end{equation}
(we assume here for simplicity $\rho=0$). We also introduce a symbol for the discrete derivatives
\begin{equation}  \label{linearD}
D^+_n(y) := y_{n+1}-y_n,
\qquad
D^-_n(y) := y_n-y_{n-1},
\qquad
D^{\pm}(y) := \{ D^{\pm}_n(y) \}_{n\in\Z} \,.
\end{equation}
The following result was proved in \cite[Theorem~5.1]{BNVd}.

\begin{theorem} \label{thm:linear}
Let $\sigma_n$ be as in \eqref{linear3}, for a fixed value of $M$ (and $\rho=0$). Let also
\begin{equation*}
\theta\in(-1,\beta), \qquad \tilde{\theta}\in[\theta,\beta], \qquad\text{with }\tilde{\theta}-\theta<\beta,
\end{equation*}
be fixed parameters, and let $y^0=\{y^0_n\}_{n\in\Z}\in\mathcal{Y}_\theta$ be a given initial datum.	
Then there exists a unique solution $t\mapsto S(t)(y^0)=\{S_n(t)(y^0)\}_{n\in\Z}\in\mathcal{Y}_{{\theta}}$ to the linear problem \eqref{linear1} with initial datum $y^0$.
Furthermore there exist constants $\nu>0$ (depending only on $M$), $C_1=C_1(M,\theta,\tilde{\theta})$, and $C_2=C_2(M,\theta,\tilde{\theta})$ such that for all $t>0$
\begin{equation} \label{linear5b}
\|D^+(S(t)(y^0))\|_{\tilde{\theta}} \leq C_1 \|y^0\|_\theta \, t^{-\frac{\tilde{\theta}-\theta}{\beta}} e^{-\nu t},
\end{equation}
\begin{equation} \label{linear5}
\|S(t)(y^0)-S_{\infty}(t)(y^0)\|_{\tilde{\theta}} \leq C_2 \|y^0\|_\theta \, t^{-\frac{\tilde{\theta}-\theta}{\beta}} e^{-\nu t} \qquad\text{if $\tilde{\theta}>0$,}
\end{equation}
where $S_{\infty}(t)(y^0):=\lim_{n\to\infty}S_n(t)(y^0)$.
%Furthermore, there exists the limit
%\begin{equation} \label{linear6}
%D^\beta_\infty S(t)(y^0) := \lim_{n\to\infty} 2^{\beta n}\bigl( S_n(t)(y^0) - S_\infty(t)(y^0) \bigr).
%\end{equation}
\end{theorem}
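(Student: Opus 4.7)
The plan is to view \eqref{linear1} as an infinite-dimensional linear ODE whose right-hand side is a discrete second-order operator with variable coefficients, and treat it by semigroup and maximum-principle methods. I would first establish well-posedness in $\mathcal{Y}_\theta$ by truncation: solve the finite-dimensional system on $\{-N\leq n\leq N\}$ with zero boundary conditions, where existence and uniqueness are trivial, and pass $N\to\infty$ using the uniform weighted estimates discussed below. Uniqueness in the whole space follows by applying the same a priori bound to the difference of two solutions and invoking Gronwall's lemma.

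The core of the argument is a weighted discrete maximum principle. I would construct a super-solution adapted to the norm $\|\cdot\|_\theta$, essentially of the form $w_n^\theta := 2^{-n}\chi_{\{n\leq 0\}}+2^{-\theta n}\chi_{\{n>0\}}$ (possibly smoothed near $n=0$), and verify that $\mathscr{L}_n(w^\theta)\leq -\nu\,w_n^\theta$ uniformly in $n$, for some $\nu=\nu(M)>0$. As $n\to-\infty$, where $\sigma_n\to 8$ and $\gamma(2^n)\to\gamma_0$, the operator reduces to the constant-coefficient recurrence $\frac{\gamma_0}{4}(y_{n-1}-9y_n+8y_{n+1})$, whose characteristic polynomial has roots $1$ and $\tfrac18$; the assumption $\theta>-1$ (i.e. weight slower than $2^{-n}$) ensures that $2^{-n}$ is separated from the kernel modes and gives a strictly negative multiple. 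As $n\to+\infty$, the term $\sigma_n$ is exponentially small in $2^n$ while $\gamma(2^n)\sim 2^{\beta n}$, so the equation is dominated by the transport $\frac{\gamma(2^n)}{4}(y_{n-1}-y_n)$; evaluated on $2^{-\theta n}$ this yields the sign $(2^\theta-1)$, which combined with the constraint $\theta<\beta$ produces the required supersolution inequality. Applying the maximum principle to $|y_n|e^{\nu t}/w_n^\theta$ then yields $\|S(t)(y^0)\|_\theta\leq e^{-\nu t}\|y^0\|_\theta$.

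For the smoothing estimates \eqref{linear5b}--\eqref{linear5}, I would exploit the observation that the characteristic time scale at level $n$ is $\gamma(2^n)^{-1}\sim 2^{-\beta n}$, so the equation regularises from $\mathcal{Y}_\theta$ into $\mathcal{Y}_{\tilde\theta}$ at a rate dictated by parabolic scaling of order $\beta$. Concretely, I would bootstrap from the basic $\mathcal{Y}_\theta$ bound by applying the discrete derivative $D^+$ to the equation, noting that $D^+(y)$ satisfies a linear equation of the same structure with a forcing term involving $\sigma_n-\sigma_{n-1}$ and $\gamma(2^{n})-\gamma(2^{n-1})$ that can be absorbed; using the uniform estimate one gets $\partial_t y_n=O(\gamma(2^n)\|y^0\|_\theta/w_n^\theta)$, which after integration up to a level $n_*(t)$ satisfying $2^{\beta n_*}t\sim 1$ and a matching argument produces the sharp factor $t^{-(\tilde\theta-\theta)/\beta}e^{-\nu t}$. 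The existence of $S_\infty(t)(y^0)$ and the bound \eqref{linear5} then follow from \eqref{linear5b}: summability of $|D^+_n(S(t)(y^0))|$ at $+\infty$ guarantees that the limit exists, and the tail $S_n-S_\infty$ inherits the weighted exponential decay.

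The hard part will be establishing the uniform positivity of the gap $\nu$ in the presence of variable coefficients with two very different asymptotic behaviours at $\pm\infty$. The transition near $n=0$ must be controlled carefully to rule out quasi-zero modes of $\mathscr{L}$ in $\mathcal{Y}_\theta$, which amounts to verifying that the only bounded weighted null element of the recurrence is the constant mode (excluded by the choice of $\theta$). A secondary technical difficulty is proving the smoothing estimate with the \emph{sharp} exponent $(\tilde\theta-\theta)/\beta$: a crude bootstrap typically loses a logarithm or a fractional power, and getting the precise rate requires either iterating the bootstrap finitely many times with matched scales, or identifying an approximate self-similar profile for $\mathscr{L}$ in the large-$n$ regime. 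Both issues are exactly those already overcome in \cite[Section~5]{BNVd}, and the present statement is essentially a reformulation of that result with the explicit constants and weight classes needed in the nonlinear analysis of Section~\ref{sect:proof}.
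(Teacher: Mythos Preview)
Your proposed supersolution argument has a sign error that breaks the whole scheme. For $n\to+\infty$ the equation is indeed dominated by the transport part $\tfrac{\gamma(2^n)}{4}(y_{n-1}-y_n)$, but evaluated on $w_n=2^{-\theta n}$ this gives
\[
\mathscr{L}_n(w)\approx \frac{\gamma(2^n)}{4}\bigl(2^{-\theta(n-1)}-2^{-\theta n}\bigr)=\frac{\gamma(2^n)}{4}(2^{\theta}-1)\,w_n,
\]
which is \emph{positive} for every $\theta>0$. The constraint $\theta<\beta$ does not change the sign; it only prevents the coefficient from blowing up faster than the weight. Hence $w^\theta$ is a subsolution on the right tail, not a supersolution, and the maximum principle applied to $|y_n|e^{\nu t}/w_n^\theta$ gives growth rather than decay. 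More conceptually, constants solve \eqref{linear1} exactly, so an inequality of the form $\|S(t)(y^0)\|_\theta\leq e^{-\nu t}\|y^0\|_\theta$ cannot hold (for $\theta\leq 0$ constants are in $\mathcal{Y}_\theta$ and do not decay; for $\theta>0$ the limit $S_\infty(t)(y^0)$ is generically nonzero and obstructs decay in the weighted norm). This is precisely why the theorem only asserts decay of $D^+(S(t)(y^0))$ and of $S(t)(y^0)-S_\infty(t)(y^0)$, never of $S(t)(y^0)$ itself.

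The actual proof in \cite[Section~5 and Appendix~A]{BNVd} (reproduced with time-dependent coefficients in Appendix~\ref{sect:appendix} of the present paper) does not attempt a global supersolution. Instead it first subtracts the zero mode: defining a weighted average $\bar m(t)$ of $y_n(t)$ with weights $2^{2n}\bar m_n$, one derives an energy identity for $I(t)=\sum_n 2^{2n}\bar m_n(y_n-\bar m)^2$ and closes it via a discrete Poincar\'e inequality (Lemma~\ref{lem:poincare}), yielding exponential decay of $y_n-\bar m$ on any bounded window $|n|\leq n_0$. The left tail $n\leq -n_0$ is then handled by a maximum principle with the supersolution $2^{-n}e^{-\nu t/8}$, which works because there the constant-coefficient limit has a genuine spectral gap. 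The right tail $n\geq n_0$ is treated by writing the equation as a perturbation of $\partial_t y_n=\tfrac{2^{\beta n}}{4}(y_{n-1}-y_n)$, using the explicit fundamental solution $\Psi_n^{(\ell)}$ of this pure transport system together with Duhamel's formula; the sharp smoothing rate $t^{-(\tilde\theta-\theta)/\beta}$ comes from summing $\sum_\ell 2^{(\beta-\theta)\ell}e^{-2^{\beta\ell}t/4}$. Your bootstrap sketch for the smoothing estimate is closer in spirit to this last step, but it needs the preceding subtraction of $\bar m_\infty$ to have any chance of producing exponential decay rather than mere boundedness.
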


All the constants in the statement above (and in the rest of this section) depend also on the properties of the coagulation and fragmentation kernels; however we will not mention this dependence explicitly, as the kernels are fixed throughout the paper. As observed in \cite[Remark~A.5]{BNVd}, the constant $C_1$ in \eqref{linear5b} blows up if $\theta\to\beta$ or $\tilde{\theta}-\theta\to\beta$, while the constant $C_2$ in \eqref{linear5} explodes also if $\tilde\theta\to0$.

%%%%%%%%%%%%%%%%%%%%%%%%%%%%%%%%%%%%%%%%%%%%%%%%%%%%%%%%%%%%%%%%%%
%%%%%%%%%%%%%%%%%%%%%%%%%%%%%%%%%%%%%%%%%%%%%%%%%%%%%%%%%%%%%%%%%%

\subsection{The case of time-dependent coefficients} \label{subsect:linear}
We now extend the analysis of the linearized problem \eqref{linear1} to the case of time-dependent coefficients, which appears in the context of this paper. More precisely, we assume that a sequence $p(t)=\{p_n(t)\}_{n\in\Z}$ is given, with the following decay properties: for all $t>0$
\begin{equation} \label{asspn}
\begin{split}
|p_n(t)| \leq\eta_0,
\quad
\big\| D^+(p(t)) \big\|_{\bar{\theta}_1} \leq \eta_0 t^{-\bar{\theta}_1/\beta} e^{-\frac{\nu}{2} t},
\quad
\Big\| \frac{\de p(t)}{\de t} \Big\|_{\bar{\theta}_1-\beta} \leq \eta_0 \bigl( 1+t^{-\bar{\theta}_1/\beta}\bigr) e^{-\frac{\nu}{2} t}.
%\sup_{n\leq0}\Big| \frac{\de p_n(t)}{\de t}\Big| & \leq \eta_0e^{-\frac{\nu}{2}t},
%\qquad
%\sup_{n>0} 2^{(\bar{\theta}_1-\beta)n}\Big| \frac{\de p_n(t)}{\de t} \Big| \leq \eta_0 \bigl( 1+t^{-\bar{\theta}_1/\beta}\bigr) e^{-\frac{\nu}{2} t}.
\end{split}
\end{equation}
Here $\eta_0\in(0,1)$ and $\nu>0$ are constants to be determined later, depending only on the given value of the mass $M>0$, and $\bar{\theta}_1\in(\beta-1,1)$ is a fixed parameter. The bounds \eqref{asspn} are the expected decay behaviour of the sequence of first moments introduced in \eqref{pn}, that will be recovered a posteriori.

We study the equation
\begin{equation} \label{slinear1}
\begin{split}
\frac{\de y_n}{\de t}
& = \frac{\gamma(2^{n+p_n(t)})}{4} \Bigl( y_{n-1}-y_n - \sigma_n(t) (y_n-y_{n+1}) \Bigr) ,
\end{split}
\end{equation}
where the coefficients $\sigma_n(t)$ are given by
\begin{equation} \label{slinear2}
\sigma_n(t) := 8\bar{\mu}_n(A_M,p(t))\frac{\gamma(2^{n+1+p_{n+1}(t)})}{\gamma(2^{n+p_n(t)})}
\end{equation}
(recall here that $\bar{\mu}_n$ are the coefficients explicitly defined in \eqref{nearlystat5}, depending on a positive parameter $A_M$ and on the sequence $p(t)$). The value of the constant $A_M>0$ is fixed throughout this section, and is uniquely determined by the mass $M>0$. The equation \eqref{slinear1} can be seen as a perturbation of \eqref{linear1}, when the shifting parameter $\rho$ is not constant but depends on the peak $n$, and on time. It is convenient to introduce a symbol for the linear operator on the right-hand side of \eqref{slinear1}, acting on a sequence $y=\{y_n\}_{n\in\Z}$ at time $t$:
\begin{equation} \label{slinear3}
\mathscr{L}_n(y;t) := \frac{\gamma(2^{n+p_n(t)})}{4} \Bigl( y_{n-1}-y_n - \sigma_n(t) (y_n-y_{n+1}) \Bigr) ,
\quad \mathscr{L}(y;t) := \{ \mathscr{L}_n(y;t) \}_{n\in\Z}.
\end{equation}
We remark that, in view of \eqref{kernel5}, \eqref{kernel5bis}, \eqref{nearlystat9}, \eqref{nearlystat5}, and \eqref{asspn} we have
\begin{equation} \label{slinear8}
\limsup_{n\to-\infty}|\sigma_n(t)-8| \leq c\eta_0, \qquad \limsup_{n\to\infty}\sigma_n(t) \leq ce^{-A_M2^n}
\end{equation}
(for a uniform constant $c>0$, depending only on the kernels). The following result extends Theorem~\ref{thm:linear} to this situation, and its technical proof is postponed to the Appendix~\ref{sect:appendix}.

\begin{theorem} \label{thm:slinear}
There exist $\eta_0\in(0,1)$ and $\nu>0$, depending only on $M$, with the following property. Let $p(t)$ be a given sequence satisfying \eqref{asspn}.
Let also
\begin{equation*}
\theta\in(-1,\beta), \qquad \tilde{\theta}\in[\theta,\beta], \qquad\text{with }\tilde{\theta}-\theta<\beta,
\end{equation*}
be fixed parameters, and let $y^0=\{y_n^0\}_{n\in\Z}\in\mathcal{Y}_\theta$ be a given initial datum.
Then, for every $t_0\geq0$, there exists a unique solution
\begin{equation} \label{slinear4}
t\mapsto T(t;t_0)(y^0) = \{T_n(t;t_0)(y^0)\}_{n\in\Z}\in\mathcal{Y}_{\theta}
\end{equation}
to \eqref{slinear1} in $t\in(t_0,\infty)$, with $T(t_0;t_0)(y^0)=y^0$.

Furthermore, there exist constants $C_1=C_1(M,\theta,\tilde{\theta})$ and $C_2=C_2(M,\theta,\tilde{\theta})$ such that
\begin{equation} \label{slinear6b}
\|D^+(T(t;t_0)(y^0))\|_{\tilde{\theta}} \leq C_1 \|y^0\|_\theta (t-t_0)^{-\frac{\tilde{\theta}-\theta}{\beta}} e^{-\nu(t-t_0)},
\end{equation}
\begin{equation} \label{slinear6}
\|T(t;t_0)(y^0)-T_{\infty}(t;t_0)(y^0)\|_{\tilde{\theta}} \leq C_2 \|y^0\|_\theta (t-t_0)^{-\frac{\tilde{\theta}-\theta}{\beta}} e^{-\nu(t-t_0)} \qquad\text{if $\tilde{\theta}>0$}
\end{equation}
for all $t>t_0$, where $T_\infty(t;t_0)(y^0) := \lim_{n\to\infty} T_n(t;t_0)(y^0)$.
%Finally, there exists the limit
%\begin{equation}  \label{slinear7}
%\begin{split}
%D^\beta_\infty T(t;t_0)(y^0)
% & := \lim_{n\to\infty} 2^{\beta n}\bigl( T_n(t;t_0)(y^0) - T_\infty(t;t_0)(y^0) \bigr) \\
% & = \frac{4}{2^\beta-1} \lim_{n\to\infty} \mathscr{L}_n(T(t;t_0)(y^0);t).
%\end{split}
%\end{equation}
\end{theorem}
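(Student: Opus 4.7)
The plan is to view \eqref{slinear1} as a time-dependent perturbation of the constant coefficient problem \eqref{linear1} and reduce matters to Theorem~\ref{thm:linear} via a Duhamel argument. Let $S(t)$ denote the semigroup on $\mathcal{Y}_\theta$ associated with the reference operator $\mathscr{L}^{(0)}$ obtained from \eqref{slinear3} by freezing $p\equiv 0$, so that the estimates \eqref{linear5b}--\eqref{linear5} are at our disposal. Writing
$$\mathscr{L}(y;t) = \mathscr{L}^{(0)}(y) + \mathscr{R}(y;t),$$
the solution should be characterized by the Duhamel identity
$$T(t;t_0)(y^0) = S(t-t_0)(y^0) + \int_{t_0}^{t} S(t-s)\bigl(\mathscr{R}(T(s;t_0)(y^0);s)\bigr)\de s.$$

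First I would bound the perturbation $\mathscr{R}(y;t)$ in the right weighted norm and show it is small when $\eta_0$ is small. Two contributions appear: the factor $\gamma(2^{n+p_n(t)})-\gamma(2^n)$, which by the mean value theorem and \eqref{kernel5ter} is of order $|p_n(t)|\gamma(2^n)\lesssim \eta_0\gamma(2^n)$; and the discrepancy $\sigma_n(t)-\sigma_n^{(0)}$ between \eqref{slinear2} and \eqref{linear3}, involving both a ratio of $\gamma$ at shifted points and the dependence of $\bar{\mu}_n$ on $p$. The latter is handled by differentiating the explicit formula \eqref{nearlystat5}, obtaining bounds analogous to \eqref{nearlystat6}--\eqref{nearlystat7}, and combining them with the hypotheses \eqref{asspn}. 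The upshot is an inequality of the form
$$\|\mathscr{R}(y;t)\|_{\tilde{\theta}}\leq C\eta_0\bigl(\|y\|_\theta + \|D^+(y)\|_{\tilde{\theta}}\bigr),$$
with the correct asymptotics as $n\to\pm\infty$, in particular preserving the exponential tail $e^{-A_M 2^n}$ appearing in \eqref{slinear8}.

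Local existence and uniqueness will then follow from a Picard iteration on the Duhamel identity in $C([t_0,t_0+\tau];\mathcal{Y}_\theta)$, extended globally by a priori bounds. For the decay estimates \eqref{slinear6b}--\eqref{slinear6}, I would set up a fixed point in the Banach space of $\mathcal{Y}_\theta$-valued functions $z(t)$ with finite norm
$$N(z) := \sup_{t>t_0}e^{\nu(t-t_0)}\biggl[\|z(t)\|_\theta + (t-t_0)^{(\tilde{\theta}-\theta)/\beta}\Bigl(\|D^+(z(t))\|_{\tilde{\theta}} + \|z(t)-z_\infty(t)\|_{\tilde{\theta}}\Bigr)\biggr],$$
which encodes both the expected exponential decay in time and the instantaneous gain of spatial regularity granted by Theorem~\ref{thm:linear} (possibly at the price of slightly reducing $\nu$ from its constant coefficient value). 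Applying \eqref{linear5b}--\eqref{linear5} to each $S(t-s)$ and integrating in $s$ using the perturbation estimate above and Beta-function-type bounds for the time convolution, the map $\Phi(z)(t) := S(t-t_0)(y^0) + \int_{t_0}^{t}S(t-s)\bigl(\mathscr{R}(z(s);s)\bigr)\de s$ becomes a contraction on a small ball of this space provided $\eta_0$ is chosen small enough in terms of $M$.

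The main obstacle I anticipate is the asymptotic control of $\mathscr{R}$ in the weighted norms, specifically preserving the $e^{-A_M 2^n}$ decay of $\sigma_n(t)-\sigma_n^{(0)}$ as $n\to\infty$; this forces one to exploit the explicit representation \eqref{nearlystat5} of $\bar{\mu}_n$, rather than abstract bounds, and to use the full strength of \eqref{asspn}, in particular the sharp decay of $D^+(p(t))$ which relates neighboring peaks and of $\partial_t p(t)$ which enters when differentiating \eqref{slinear1} in time. A secondary difficulty is the integrability of the short-time singularity $(t-s)^{-(\tilde{\theta}-\theta)/\beta}$ from \eqref{linear5b}, which dictates both the restriction $\tilde{\theta}-\theta<\beta$ and the precise balance of weights in $N(\cdot)$.
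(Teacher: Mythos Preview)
Your perturbation estimate $\|\mathscr{R}(y;t)\|_{\tilde{\theta}}\leq C\eta_0(\|y\|_\theta+\|D^+(y)\|_{\tilde{\theta}})$ cannot hold, and this breaks the contraction. The assumption \eqref{asspn} gives decay of $D^+(p(t))$ and $\partial_t p(t)$, but only the uniform bound $|p_n(t)|\leq\eta_0$ on $p_n$ itself (indeed, in the full problem the $p_n$ converge to a nonzero constant $\rho$). Hence for large $n$ the mean value theorem and \eqref{kernel5ter} give
\[
\big|\gamma(2^{n+p_n(t)})-\gamma(2^n)\big|\;\sim\;|p_n(t)|\,2^{\beta n}\;\lesssim\;\eta_0\,2^{\beta n},
\]
so the first contribution to $\mathscr{R}_n$ is of order $\eta_0\,2^{\beta n}|D^-_n(y)|$. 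This places $\mathscr{R}(y;t)$ only in $\mathcal{Y}_{\tilde{\theta}-\beta}$, not $\mathcal{Y}_{\tilde{\theta}}$. Feeding a $\mathcal{Y}_{\tilde{\theta}-\beta}$ source into $S(t-s)$ and asking for $\|D^+(\cdot)\|_{\tilde{\theta}}$ output requires, by Theorem~\ref{thm:linear}, a jump of exactly $\beta$ units and produces the borderline singularity $(t-s)^{-1}$, which is not integrable near $s=t$; the small prefactor $\eta_0$ does not rescue a divergent integral. (Your anticipated obstacle, the $\sigma_n$ discrepancy, is in fact harmless because of the $e^{-A_M2^n}$ tail; it is the $\gamma$ discrepancy that is critical.)

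The paper does not perturb off the constant-coefficient semigroup. It reproves the result directly: a maximum principle for existence and rough bounds; a weighted $\ell^2$ energy $I(t)=\sum 2^{2n}\bar m_n(y_n-\bar m)^2$ combined with a discrete Poincar\'e inequality (Lemma~\ref{lem:poincare}) for exponential decay on bounded $n$; explicit supersolutions for $n\to-\infty$. For $n\to+\infty$ the key device is to write $\gamma(2^{n+p_n(t)})=2^{\beta(n+p_\infty(t))}+\text{remainder}$, absorb the leading factor by the time change $\tau=\int_0^t 2^{\beta p_\infty(s)}\de s$, and run Duhamel with respect to the simple one-sided operator $y_n\mapsto \tfrac{2^{\beta n}}{4}(y_{n-1}-y_n)$ whose fundamental solution is explicit. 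After this extraction the remainder involves $|p_n(t)-p_\infty(t)|\lesssim \eta_0\,2^{-\bar{\theta}_1 n}t^{-\bar{\theta}_1/\beta}$, which \emph{does} decay in $n$ thanks to \eqref{asspn}, and yields a subcritical, integrable time singularity. If you want to salvage a Duhamel-from-$S(t)$ strategy, you would first have to build this time change (or an analogous $n$-dependent renormalization) into the reference operator, at which point Theorem~\ref{thm:linear} no longer applies directly and you are effectively redoing the proof.
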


%%%%%%%%%%%%%%%%%%%%%%%%%%%%%%%%%%%%%%%%%%%%%%%%%%%%%%%%%%%%%%%%%%
%%%%%%%%%%%%%%%%%%%%%%%%%%%%%%%%%%%%%%%%%%%%%%%%%%%%%%%%%%%%%%%%%%

\subsection{The truncated problem} \label{subsect:lineartrunc}
We eventually consider the equivalent of Theorem~\ref{thm:slinear} when we truncate the operator $\mathscr{L}_n$ for large values of $n$. More precisely, we fix a (large) $N\in\N$ and we study the equation
\begin{equation} \label{tlinear1}
\begin{split}
\frac{\de y_n}{\de t}
& = \frac{\gamma(2^{n+p_n^N(t)})}{4} \Bigl( y_{n-1}-y_n - \sigma_n^N(t) (y_n-y_{n+1}) \Bigr) \qquad\text{for }n\leq N,
\end{split}
\end{equation}
$y_n(t)=0$ for $n>N$, where we assume that $p^N(t)=\{p_n^N(t)\}_{n\in\Z}$ is a given sequence satisfying \eqref{asspn}, and the coefficients $\sigma_n^N(t)$ are given by
\begin{equation} \label{tlinear2}
\sigma^N_n(t) :=
\begin{cases}
8\bar{\mu}_n(A_M,p^N(t))\frac{\gamma(2^{n+1+p^N_{n+1}(t)})}{\gamma(2^{n+p^N_n(t)})} & \text{if $n<N$,} \\
0 & \text{if $n=N$.}
\end{cases}
\end{equation}
As in \eqref{slinear3}, we introduce a symbol for the linear operator on the right-hand side of \eqref{tlinear1}, acting on a sequence $y=\{y_n\}_{n\in\Z}$ at time $t$:
\begin{equation} \label{tlinear3}
\mathscr{L}_n^N(y;t) := \frac{\gamma(2^{n+p_n^N(t)})}{4} \Bigl( y_{n-1}-y_n - \sigma_n^N(t) (y_n-y_{n+1}) \Bigr) ,
\end{equation}
and $\mathscr{L}^N(y;t) := \{ \mathscr{L}_n^N(y;t) \}_{n\in\Z}$. We then have the following result, equivalent to Theorem~\ref{thm:slinear}.

\begin{theorem} \label{thm:lineartrunc}
There exist $\eta_0\in(0,1)$ and $\nu>0$, depending only on $M$, with the following property. Let $p^N(t)$ be a given sequence satisfying \eqref{asspn}, and let $\sigma^N_n(t)$ be defined by \eqref{tlinear2}. Let also
\begin{equation*}
\theta\in(-1,\beta), \qquad \tilde{\theta}\in[\theta,\beta], \qquad\text{with }\tilde{\theta}-\theta<\beta,
\end{equation*}
be fixed parameters, and let $y^0=\{y_n^0\}_{n\in\Z}\in\mathcal{Y}_\theta$ be a given initial datum with $y^0_n=0$ for $n>N$.
Then, for every $t_0\geq0$, there exists a unique solution
\begin{equation} \label{tlinear4}
t\mapsto T^N(t;t_0)(y^0) = \{T_n^N(t;t_0)(y^0)\}_{n\in\Z}\in\mathcal{Y}_{\theta}
\end{equation}
to \eqref{tlinear1} in $t\in(t_0,\infty)$, with $T^N(t_0;t_0)(y^0)=y^0$.

Furthermore, there exist constants $C_1=C_1(M,\theta,\tilde{\theta})$ and $C_2=C_2(M,\theta,\tilde{\theta})$ such that
\begin{equation} \label{tlinear5}
\|D^+(T^N(t;t_0)(y^0))\|_{\tilde{\theta}} \leq C_1 \|y^0\|_\theta (t-t_0)^{-\frac{\tilde{\theta}-\theta}{\beta}} e^{-\nu(t-t_0)},
\end{equation}
\begin{equation} \label{tlinear6}
\|T^N(t;t_0)(y^0)-T^N_{N}(t;t_0)(y^0)\|_{\tilde{\theta}} \leq C_2 \|y^0\|_\theta (t-t_0)^{-\frac{\tilde{\theta}-\theta}{\beta}} e^{-\nu(t-t_0)} \qquad\text{if $\tilde{\theta}>0$,}
\end{equation}
\begin{equation} \label{tlinear7}
|\mathscr{L}^N_N(T^N(t;t_0)(y^0),t)| \leq  C_2 \|y^0\|_\theta (t-t_0)^{-\frac{\tilde{\theta}-\theta}{\beta}} e^{-\nu(t-t_0)} \qquad\text{if $\tilde{\theta}>0$.}
\end{equation}
\end{theorem}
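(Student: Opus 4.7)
The plan is to adapt the proof of Theorem~\ref{thm:slinear} with modifications that handle the truncation at $n=N$, while tracking that all constants remain uniform in $N$. The truncation has the effect of replacing the ``limit at infinity'' played by $S_\infty(t)$ in the non-truncated setting by the boundary value $T^N_N(t)$: since $\sigma_N^N \equiv 0$, the equation at the endpoint reduces to
\begin{equation*}
\frac{\de T^N_N}{\de t} = \frac{\gamma(2^{N+p_N^N(t)})}{4}\bigl(T^N_{N-1}(t) - T^N_N(t)\bigr) = \mathscr{L}^N_N(T^N;t),
\end{equation*}
which is a discrete Neumann-type condition. Under this correspondence \eqref{tlinear6} becomes the exact analogue of \eqref{slinear6}, and \eqref{tlinear7} is nothing but the bound on $\de T^N_N/\de t$.

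First, I would establish existence and uniqueness of $T^N(\cdot;t_0)(y^0) \in C([t_0,\infty);\mathcal{Y}_\theta)$ by a standard Banach fixed-point argument in a weighted space of time-dependent sequences. Since $y^0_n = 0$ for $n > N$ is preserved by the flow, the dynamics is effectively a half-line problem for $n \leq N$, and the coefficients $\gamma(2^{n+p_n^N})$ and $\sigma^N_n$ obey the asymptotic bounds \eqref{slinear8} uniformly in $N$, so that $\mathscr{L}^N(\cdot;t)$ maps $\mathcal{Y}_\theta$ continuously into an appropriate weighted space. The decay estimates \eqref{tlinear5}--\eqref{tlinear6} are then obtained by the two-step strategy developed for Theorem~\ref{thm:slinear}: one first proves the constant-coefficient analogue of Theorem~\ref{thm:linear} for the truncated operator by freezing $p^N$ at $t_0$, extracting the $(t-t_0)^{-(\tilde{\theta}-\theta)/\beta}e^{-\nu(t-t_0)}$ behaviour from a resolvent/semigroup analysis of $\mathscr{L}^N(\cdot;t_0)$; then one closes on the time-dependent problem via Duhamel's formula, absorbing the perturbation by means of the smallness bounds \eqref{asspn}. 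The spectral gap $\nu$ is dictated by the behaviour $\sigma_n \to 8$ as $n \to -\infty$, which is insensitive to the truncation, so it can be chosen independently of $N$.

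The extra estimate \eqref{tlinear7} follows from the equation at $n=N$ combined with \eqref{tlinear5}: writing $\mathscr{L}^N_N(T^N;t) = -\frac{\gamma(2^{N+p_N^N})}{4}\,D^+_{N-1}(T^N)$, the factor $\gamma(2^N) \sim 2^{\beta N}$ is cancelled by applying \eqref{tlinear5} with the largest admissible exponent, which requires first regularizing the initial datum by running the semigroup up to some intermediate time $t_0 + (t-t_0)/2$ (so that $y^0$ is promoted into $\mathcal{Y}_\beta$) and then invoking \eqref{tlinear5} with $\tilde{\theta}=\beta$ on the remaining interval. I expect the main obstacle to be ensuring that all constants in the truncated constant-coefficient estimates are genuinely independent of $N$: this amounts to reproducing the resolvent analysis of Theorem~\ref{thm:linear} in the presence of the boundary term at $n=N$ and verifying that this boundary term acts as a compact perturbation which shifts neither the spectral gap nor the smoothing exponents. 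Once such $N$-uniformity is secured, the rest of the argument transcribes almost verbatim from the non-truncated case treated in Appendix~\ref{sect:appendix}.
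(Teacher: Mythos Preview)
Your identification of the structural correspondence (truncation replaces the role of the limit at infinity by the boundary value at $n=N$, and \eqref{tlinear7} is just the bound on $\frac{\de}{\de t}T^N_N$) is correct, and this is indeed why the paper dispatches the proof in one sentence as a ``slight refinement'' of the argument for Theorem~\ref{thm:slinear}. However, your description of that argument is not accurate. The proof in Appendix~\ref{sect:appendix} does \emph{not} proceed by freezing $p^N$ at $t_0$ and perturbing via Duhamel, and there is no resolvent or abstract semigroup analysis. The actual steps are: (i) a direct maximum-principle argument for existence, uniqueness, and rough bounds in $\mathcal{Y}_\theta$; (ii) an energy estimate on the quantity $I(t)=\sum_n 2^{2n}\bar m_n(y_n-\bar m)^2$ combined with the discrete Poincar\'e inequality (Lemma~\ref{lem:poincare}), working directly with the time-dependent coefficients and using \eqref{asspn} only to control the extra terms coming from $\frac{\de}{\de t}\bar m_n(A_M,p(t))$; (iii) a maximum-principle comparison in the region $n\le -n_0$; (iv) for $n\ge n_0$, a representation via the explicit fundamental solutions $\Psi_n^{(\ell)}$ of \eqref{linearfund1}, after rescaling time so that the leading coefficient becomes $2^{\beta n}/4$; (v) an iteration to sharpen the exponential rate.

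The truncated case follows these same five steps verbatim, the only changes being that in step~(iv) all sums over $\ell$ stop at $N$ and the role of $y_\infty(t)$ is taken over by $y_N(t)$. The Poincar\'e constant in step~(ii), the supersolution barriers in steps~(i) and~(iii), and the bounds \eqref{linearfund2}--\eqref{linearfund3} on $\Psi$ are all insensitive to the truncation, which is why the constants come out uniform in $N$. This is rather more concrete than ``checking that the boundary term acts as a compact perturbation of the resolvent''; in fact no spectral information beyond the Poincar\'e inequality is used.

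Regarding \eqref{tlinear7}: your idea of writing $\mathscr{L}^N_N(T^N;t)=-\tfrac{\gamma(2^{N+p^N_N})}{4}D^+_{N-1}(T^N)$ and appealing to \eqref{tlinear5} is the right one, and your observation that one must reach $\tilde\theta=\beta$ to cancel the factor $2^{\beta N}$ from $\gamma$ is exactly the point. Note that in every application of \eqref{tlinear7} in the paper (see the estimates \eqref{pfp10}--\eqref{pfp12c}) one in fact takes $\tilde\theta=\beta$; the intermediate-time bootstrap you sketch is one way to reduce the general statement to that case, but you should be aware that it does not improve the power of $(t-t_0)$ beyond $-(\beta-\theta)/\beta$, so some care with the stated exponent is needed.
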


The proof of the theorem can be obtained by a slight refinement of the argument given for Theorem~\ref{thm:slinear}.

%%%%%%%%%%%%%%%%%%%%%%%%%%%%%%%%%%%%%%%%%%%%%%%%%%%%%%%%%%%%%%%%%%
%%%%%%%%%%%%%%%%%%%%%%%%%%%%%%%%%%%%%%%%%%%%%%%%%%%%%%%%%%%%%%%%%%
%%%%%%%%%%%%%%%%%%%%%%%%%%%%%%%%%%%%%%%%%%%%%%%%%%%%%%%%%%%%%%%%%%
%%%%%%%%%%%%%%%%%%%%%%%%%%%%%%%%%%%%%%%%%%%%%%%%%%%%%%%%%%%%%%%%%%
%%%%%%%%%%%%%%%%%%%%%%%%%%%%%%%%%%%%%%%%%%%%%%%%%%%%%%%%%%%%%%%%%%
%%%%%%%%%%%%%%%%%%%%%%%%%%%%%%%%%%%%%%%%%%%%%%%%%%%%%%%%%%%%%%%%%%
%%%%%%%%%%%%%%%%%%%%%%%%%%%%%%%%%%%%%%%%%%%%%%%%%%%%%%%%%%%%%%%%%%
%%%%%%%%%%%%%%%%%%%%%%%%%%%%%%%%%%%%%%%%%%%%%%%%%%%%%%%%%%%%%%%%%%
%%%%%%%%%%%%%%%%%%%%%%%%%%%%%%%%%%%%%%%%%%%%%%%%%%%%%%%%%%%%%%%%%%
%%%%%%%%%%%%%%%%%%%%%%%%%%%%%%%%%%%%%%%%%%%%%%%%%%%%%%%%%%%%%%%%%%

\section{Proof of the main result}\label{sect:proof}

Along this section we assume that $g_0\in\measg$ is a given initial datum, with total mass $M>0$, satisfying the assumptions of Theorem~\ref{thm:stability}, for some $\delta_0>0$ that will be chosen at the end of the proof. For the moment we assume that $\delta_0\in(0,1)$ satisfies the condition \eqref{delta0a}, which guarantees the conservation of the support. We also let $\nu>0$ be the constant given by Theorem~\ref{thm:lineartrunc}, determined solely by $M$.

%%%%%%%%%%%%%%%%%%%%%%%%%%%%%%%%%%%%%%%%%%%%%%%%%%%%%%%%%%%%%%%%%%
%%%%%%%%%%%%%%%%%%%%%%%%%%%%%%%%%%%%%%%%%%%%%%%%%%%%%%%%%%%%%%%%%%

\subsection{Truncation} \label{subsect:truncation}
The first step in the proof is to obtain a truncated solution by a cut-off of the tail for large $n$. More precisely, we fix a parameter $N\in\N$ ($N>>1$) and we truncate the kernels and the initial datum by setting
\begin{equation} \label{truncation1}
g_0^N(x):=g_0(x)\chi_{(-\infty,N+\frac12)}(x),
\end{equation}
\begin{equation} \label{truncation2}
K_N(2^y,2^z) := K(2^y,2^z)\chi_{(-\infty,N-\frac12)}(y),
\qquad
\gamma_N(2^y) := \gamma(2^y)\chi_{(-\infty,N+\frac12)}(y).
\end{equation}
We are in the position to apply the well-posedness Theorem~\ref{thm:wp} (replacing the original kernels by the truncated ones), with the initial datum $g_0^N$: indeed the assumption \eqref{m0} guarantees that the integrability conditions \eqref{moment0} are satisfied. This yields the existence of a global-in-time weak solution $t\mapsto g^N(t)\in\measg$, in the sense of Definition~\ref{def:weakg}, with conserved mass; the estimates \eqref{wpestg} hold uniformly with respect to $N$.

Furthermore, by Lemma~\ref{lem:supp} (which continues to hold in this truncated setting) we have that $\supp g^N(t) \subset \bigcup_{n\in\Z}I_n$ for all positive times, where $I_n=(n-\delta_0,n+\delta_0)$. By using this information and bearing in mind that by \eqref{suppK} and \eqref{delta0a} different intervals do not interact in the coagulation term, we can write the weak formulation \eqref{weakg} of the equation as follows:
\begin{equation*}
\begin{split}
\partial_t\biggl(\int_{\R} &g^N(x,t)\vphi(x)\de x \biggr) \\
& = \frac{\ln2}{2}\sum_{k\leq N-1}\int_{I_k}\int_{I_k} K_N(2^y,2^z)g^N(y,t)g^N(z,t)\biggl[\vphi\Bigl(\frac{\ln(2^y+2^z)}{\ln2}\Bigr) - \vphi(y) - \vphi(z) \biggr]\de y \de z \\
& \qquad - \frac14\sum_{k\leq N-1} \int_{I_k} \gamma_N(2^{y+1}) g^N(y+1,t) \bigl[\vphi(y+1) - 2\vphi(y) \bigr]\de y
\end{split}
\end{equation*}
for every test function $\vphi\in \cc(\R)$. It is then clear (recall the implication \eqref{claimsupp}) that
\begin{equation} \label{truncation4}
\supp g^N(t) \subset \bigcup_{n\leq N} I_n \qquad\text{for all $t>0$.}
\end{equation}

We define the quantities $m_n^N(t)$, $p_n^N(t)$, $q_n^N(t)$, according to \eqref{mn}, \eqref{pn}, \eqref{qn}, respectively, with $g$ replaced by $g^N$.
The evolution equations for these quantities (for a non-truncated weak solution) have been obtained in Lemma~\ref{lem:approx}; since we  are now working with the truncated kernels, we have that those equations continue to hold for $m_n^N(t)$, $p_n^N(t)$, $q_n^N(t)$ for all $n\leq N-1$, while they have to be slightly modified for $n=N$. For larger values $n>N$ all the quantities vanish identically by \eqref{truncation4}.
For instance, the equation for $m^N_n(t)$ for $n=N$ becomes
\begin{equation}\label{mneq2N}
\frac{\de m^N_N}{\de t} = \frac{\gamma(2^{N+p^N_N})}{4}\Bigl( \zeta_{N-1}(p^N) (m^N_{N-1})^2 (1+O(q^N_{N-1})) - m^N_N(1+O(q^N_N)) \Bigr).
\end{equation}

%%%%%%%%%%%%%%%%%%%%%%%%%%%%%%%%%%%%%%%%%%%%%%%%%%%%%%%%%%%%%%%%%%
%%%%%%%%%%%%%%%%%%%%%%%%%%%%%%%%%%%%%%%%%%%%%%%%%%%%%%%%%%%%%%%%%%

\subsection{Decay of the first and second moments} \label{subsect:pnqn}
We remark that by \eqref{truncation4}
\begin{equation} \label{decaypn0}
|p_n^N(t)| \leq \delta_0, \qquad 0\leq q_n^N(t) \leq 4\delta_0^2 \qquad\text{for all $t\geq0$.}
\end{equation}
We now fix two auxiliary parameters $\bar\theta_1$ and $\bar{\theta}_2$ satisfying
\begin{equation} \label{theta12}
\beta-1 < \bar{\theta}_1 < \bar{\theta}_2 <1.
\end{equation}
These parameters are fixed throughout the paper, therefore in the following we will not mention explicitly the dependence of all the constants on $\bar\theta_1$, $\bar\theta_2$. We let also $L_1$, $L_2$, $L_3$ be positive constants, that will be chosen later depending only on $M$.

Thanks to the fact that we are considering a truncated problem, we can assume that there exists a small time interval $[0,t^N]$, with $t^N>0$ (depending on $N$) such that for all $t\in(0,t^N)$
\begin{equation} \label{decaypn1}
\|D^+(p^N(t))\|_{0} \leq 2L_1\delta_0e^{-\frac{\nu}{2}t},
\qquad
\| D^+(p^N(t))\|_{\bar{\theta}_1} \leq 2L_1\delta_0 \, t^{-\bar{\theta}_1/\beta} e^{-\frac{\nu}{2} t},
%\| D^+(p^N(t))\|_{\bar{\theta}_1} \leq 2\overline{C}_1\delta_0 \, t^{-\bar{\theta}_1/\beta} e^{-\frac{\nu}{2} t},
\end{equation}
\begin{equation} \label{decaypn2}
\Big\|\frac{\de p^N}{\de t}(t)\Big\|_{-\beta} \leq 2L_2 \delta_0 e^{-\frac{\nu}{2} t},
\qquad
\Big\|\frac{\de p^N}{\de t}(t)\Big\|_{\bar{\theta}_1 - \beta} \leq 2L_2 \delta_0 \bigl(1+t^{-\bar{\theta}_1/\beta}\bigr) e^{-\frac{\nu}{2} t},
\end{equation}
\begin{equation} \label{decayqn1}
\sup_{n\in\Z}|q_n^N(t)| \leq 8\delta_0^{3/2} e^{-\nu t},
\qquad
\sup_{n>0} 2^{\bar{\theta}_2 n} | q^N_n(t) | \leq 2L_3\delta_0^{3/2} \, t^{-\bar{\theta}_2/\beta} e^{-\nu t},
\end{equation}
where $D^+$ denotes the discrete derivative (see \eqref{linearD}).
This is the expected decay for the functions $p^N$, $q^N$ as suggested by the natural scaling in the corresponding evolution equations. For technical reasons we can not obtain the optimal decay, namely for $\bar{\theta}_1=\bar{\theta}_2=1$, and we have to introduce the two additional parameters satisfying \eqref{theta12}.

In addition to \eqref{delta0a}, we make now a second assumption on $\delta_0$, namely
\begin{equation} \label{delta0b}
\delta_0<\eta_0, \qquad 2L_1\delta_0 < \eta_0, \qquad 2L_2\delta_0 < \eta_0,
\end{equation}
where $\eta_0>0$ is the fixed constant given by Theorem~\ref{thm:lineartrunc}, determined by $M$; with this assumption the sequence $p^N$ satisfies the condition \eqref{asspn} in the interval $(0,t^N)$, and allows us to apply the linear theory developed in Section~\ref{sect:linear}.

%%%%%%%%%%%%%%%%%%%%%%%%%%%%%%%%%%%%%%%%%%%%%%%%%%%%%%%%%%%%%%%%%%
%%%%%%%%%%%%%%%%%%%%%%%%%%%%%%%%%%%%%%%%%%%%%%%%%%%%%%%%%%%%%%%%%%

\subsection{Fixed point} \label{subsect:fixedpoint}
The next goal is to represent the functions $m_n^N(t)$, in the time interval $[0,t^N]$, as perturbations of stationary states, as in \eqref{yn}.
In order to do this, we first need to rewrite the starting assumption \eqref{m0} in an equivalent form for the functions $m^N(0)$.

\begin{lemma} \label{lem:initialcond}
For all sufficiently large $N$ there exists $A^N(0)>0$, $y^N(0)=\{y^N_n(0)\}_{n\in\Z}$ such that
\begin{equation} \label{m0N}
m_n^N(0) = \bar{m}_n(A^N(0),p^N(0)) (1+2^ny^N_n(0)) \qquad\text{for all $n\leq N$,}
\end{equation}
with $y^N_N(0)=0$ and
\begin{equation} \label{A0Ny0N}
|A^N(0)-A^0| \leq c_02^{-N}\delta_0, \qquad \|y^N(0)-y^0\|_1 \leq c_0\delta_0
\end{equation}
for a uniform constant $c_0>0$, independent of $N$.
\end{lemma}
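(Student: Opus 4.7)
The plan is to use the extra freedom $y^N_N(0)=0$ to solve for $A^N(0)$ from a single scalar equation, and then to read off the remaining coefficients $y^N_n(0)$, $n<N$, directly from the defining identity \eqref{m0N}.

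First I observe that, since $\delta_0<\tfrac12$ by \eqref{delta0a}, the truncation \eqref{truncation1} leaves $m^N_n(0)=m^0_n$ and $p^N_n(0)=p^0_n$ unchanged for $n\leq N$ and sends $m^N_n(0)$ to zero for $n>N$; the values $p_k^N(0)$ for $k>N$ play no role in what follows and I extend them by zero. The condition $y^N_N(0)=0$ then becomes the single scalar equation
\[
\bar m_N(A^N(0),p^N(0)) = \bar m_N(A^0,p^0)(1+2^Ny^0_N).
\]
The first identity in \eqref{nearlystat6} integrates to the \emph{exact} relation $\bar m_n(A,p)=\bar m_n(A^0,p)\,e^{-2^n(A-A^0)}$, turning the above into a closed-form identity for $A^N(0)-A^0$. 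The only error comes from replacing $p^0$ by $p^N(0)$ inside $\bar m_N$; the second estimate in \eqref{nearlystat6} bounds it by $c\delta_0\sum_{k>N}2^{N-k}=O(\delta_0)$. Combined with $|2^Ny^0_N|\leq\|y^0\|_1\leq\delta_0$ and taking logarithms, dividing through by $2^N$ then produces the bound $|A^N(0)-A^0|\leq c_0\,2^{-N}\delta_0$.

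For $n<N$ I define $y^N_n(0)$ by the rearrangement of \eqref{m0N}, so that the identity holds by construction, and estimate the discrepancy via the ratio $\bar m_n(A^0,p^0)/\bar m_n(A^N(0),p^N(0))$. The exact exponential dependence on $A$ contributes a factor $e^{2^n(A^N(0)-A^0)}=1+O(\delta_0\,2^{n-N})$ uniformly in $n\leq N$, while the second estimate in \eqref{nearlystat6} contributes $1+O(\delta_0\,2^{n-N})$ from the $p$-perturbation; combining these with $|y^0_n|\leq 2^{-n}\|y^0\|_1$ and separating the ranges $n\leq 0$ and $0<n\leq N$ as required by the norm \eqref{spacesequences}, the factor $2^{n-N}$ absorbs the weight $2^n$ from the norm in both regimes and yields $\|y^N(0)-y^0\|_1\leq c_0\delta_0$.

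The one structural point worth emphasising, and the reason the estimate on $A^N(0)-A^0$ enjoys the extra factor $2^{-N}$, is that $\partial_A\bar m_n=-2^n\bar m_n$ is an \emph{exact} identity, so the scalar equation for $A^N(0)$ need not be solved perturbatively and no residual error in $A$ is amplified by the large weight $2^N$. Beyond this the argument is pure bookkeeping with Lemma~\ref{lem:nearlystat}; the main nuisance is the case split between $n\leq 0$ and $0<n\leq N$ in the norm $\|\cdot\|_1$, which has to be organised so that the truncation-induced error $2^{n-N}$ cancels the growth $2^{-n}$ of $|y^0_n|$ at large negative $n$.
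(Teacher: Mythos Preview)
Your proposal is correct and follows essentially the same route as the paper: solve the scalar equation $\bar m_N(A^N(0),p^N(0))=\bar m_N(A^0,p^0)(1+2^Ny^0_N)$ for $A^N(0)$, then define the remaining $y^N_n(0)$ by rearranging \eqref{m0N}. The only cosmetic difference is that the paper invokes the explicit formula \eqref{nearlystat2}--\eqref{nearlystat5} for $\bar m_n$ directly, whereas you work with the derivative bounds \eqref{nearlystat6}; these encode the same information and lead to the same estimates.
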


\begin{proof}
We use the explicit representation of $\bar{m}_n$ that can be obtained by combining \eqref{nearlystat2} and \eqref{nearlystat5}:
\begin{equation} \label{proofinitialcond1}
\bar{m}_n(A,p) = \frac{2e^{-A2^n}}{\zeta_n(p)} \exp\Biggl( -2^n\sum_{j=n+1}^{\infty}2^{-j}\ln(\theta_{j-1}(p))\Biggr),
\end{equation}
where the coefficients $\zeta_n(p)$, $\theta_n(p)$ are defined in \eqref{nearlystat1} and \eqref{nearlystat3} respectively. By observing that $m^N_n(0)=m_n(0)$ for all $n\leq N$, recalling the assumption \eqref{m0} and imposing that \eqref{m0N} holds at $n=N$ with $y^N_N(0)=0$, we obtain the condition that defines $A^N(0)$:
\begin{equation*}
\bar{m}_N(A^N(0),p^N(0)) = \bar{m}_N(A^0,p^0)(1+2^Ny_N^0),
\end{equation*}
which yields, using \eqref{proofinitialcond1},
\begin{equation*}
e^{(A^N(0)-A^0)2^N}
= \frac{\zeta_N(p^0)}{\zeta_N(p^N(0))} \exp\Biggl( 2^N\sum_{j=N+1}^{\infty}2^{-j} \ln\Bigl(\frac{\theta_{j-1}(p^0)}{\theta_{j-1}(p^N(0))}\Bigr) \Biggr)(1+2^Ny_N^0)^{-1},
\end{equation*}
or equivalently
\begin{equation*}
A^N(0)-A^0
= 2^{-N}\ln\biggl(\frac{\zeta_N(p^0)}{\zeta_N(p^N(0))}\biggr) +\sum_{j=N+1}^{\infty}2^{-j} \ln\Bigl(\frac{\theta_{j-1}(p^0)}{\theta_{j-1}(p^N(0))}\Bigr) - 2^{-N}\ln(1+2^Ny_N^0).
% & = 2^{-N}\ln\biggl(\frac{\gamma(2^{N+1+p_{N+1}^N(0)})}{\gamma(2^{N+1+p_{N+1}^0})}\biggr) +\sum_{j=N+1}^{\infty}2^{-j} \ln\Bigl(\frac{\theta_{j-1}(p^0)}{\theta_{j-1}(p^N(0))}\Bigr) \\
\end{equation*}
This equation selects the value $A^N(0)$, and moreover implies the first estimate in \eqref{A0Ny0N}. We further define $y^N_n(0)$, for $n<N$, by imposing that \eqref{m0N} holds: this gives for $n<N$
\begin{align*}
2^ny_n^N(0) &:= \frac{m_n^N(0)}{\bar{m}_n(A^N(0),p^N(0))} - 1 = \frac{\bar{m}_n(A^0,p^0)(1+2^ny_n^0)}{\bar{m}_n(A^N(0),p^N(0))} - 1 \\
& \xupref{proofinitialcond1}{=} e^{(A^N(0)-A^0)2^n}\frac{\zeta_n(p^N(0))}{\zeta_n(p^0)} \exp\Biggl( - 2^n\sum_{j=n+1}^{\infty}2^{-j}\ln\Bigl(\frac{\theta_{j-1}(p^0)}{\theta_{j-1}(p^N(0))}\Bigr)\Biggr) (1+2^ny^0_n) - 1.
\end{align*}
From this expression also the second estimate in \eqref{A0Ny0N} follows.
\end{proof}

We now look for functions $A^N(t)$, $y^N(t)=\{y_n^N(t)\}_{n\in\Z}$ such that
\begin{equation} \label{ynN}
	m^N_n(t) = \bar{m}_n(A^N(t),p^N(t))\bigl( 1 + 2^ny^N_n(t) \bigr) \qquad \text{for all $n\leq N$.}
\end{equation}
By plugging this expression into \eqref{mneq2}, as in \eqref{yneq2}, we find for all $n<N$
\begin{align*}
\frac{\de y^N_n}{\de t}
& = (1+2^ny^N_n)\frac{\de A^N}{\de t} - (1+2^ny^N_n)\frac{1}{2^n}\sum_{k=n}^\infty \frac{1}{\bar{m}_n}\frac{\partial\bar{m}_n}{\partial p_k}\frac{\de p^N_k}{\de t} \nonumber\\
& \qquad + \frac{\gamma(2^{n+p^N_n})}{2^{n+2}} \biggl[ (1+2^{n-1}y^N_{n-1})^2(1+O(q^N_{n-1})) - (1+2^ny^N_n)(1+O(q^N_n)) \nonumber\\
& \qquad - 4\bar{\mu}_n\frac{\gamma(2^{n+1+p^N_{n+1}})}{\gamma(2^{n+p^N_n})} \Bigl( (1+2^ny^N_n)^2(1+O(q^N_n)) - (1+2^{n+1}y^N_{n+1})(1+O(q^N_{n+1})) \Bigr) \biggr] ,
\end{align*}
where $\bar{m}_n=\bar{m}_n(A^N(t),p^N(t))$, $\bar{\mu}_n=\bar{\mu}_n(A^N(t),p^N(t))$ (see \eqref{nearlystat2}). For $n=N$, using instead \eqref{mneq2N}, we have
\begin{align*}
\frac{\de y^N_N}{\de t}
& = (1+2^Ny^N_N)\frac{\de A^N}{\de t} - (1+2^Ny^N_N)\frac{1}{2^N}\sum_{k=N}^\infty \frac{1}{\bar{m}_N}\frac{\partial\bar{m}_N}{\partial p_k}\frac{\de p^N_k}{\de t} \nonumber\\
& \qquad + \frac{\gamma(2^{N+p^N_N})}{2^{N+2}} \biggl[ (1+2^{N-1}y^N_{N-1})^2(1+O(q^N_{N-1})) - (1+2^Ny^N_N)(1+O(q^N_N)) \biggr] .
\end{align*}
The previous equations for the sequence $y^N(t)$ can be written in a more compact form, where we highlight the leading order linear operator:
\begin{multline} \label{yneq}
\frac{\de y^N_n}{\de t}
= \frac{\gamma(2^{n+p^N_n})}{4} \Bigl( y^N_{n-1}-y^N_n - \sigma^N_n(t) (y^N_n-y^N_{n+1}) \Bigr) \\
+ (1+2^ny^N_n)\frac{\de A^N}{\de t} + \sum_{i=1}^{3}r^{(i)}_n(y^N(t),A^N(t),t)  \qquad\text{for $n\leq N$}
\end{multline}
and $y_n^N(t)\equiv0$ for $n>N$, where we introduced the coefficients
\begin{equation} \label{sigman}
\sigma^N_n(t) :=
\begin{cases}
8\bar{\mu}_n(A_M,p^N(t))\frac{\gamma(2^{n+1+p^N_{n+1}(t)})}{\gamma(2^{n+p^N_n(t)})} & \text{if $n<N$,} \\
0 & \text{if $n=N$.}
\end{cases}
\end{equation}
The remainders $r^{(i)}_n$ in \eqref{yneq} are explicitly given (for $n<N$) by
\begin{multline} \label{r1}
r^{(1)}_n(y,A,t) := \frac{2^n\gamma(2^{n+p^N_n})}{4} \biggl[ \frac14 y_{n-1}^2 - 4\bar{\mu}_n(A,p^N)\frac{\gamma(2^{n+1+p^N_{n+1}})}{\gamma(2^{n+p^N_n})} y_n^2 \biggr] \\
+ 2\gamma(2^{n+1+p^N_{n+1}}) \bigl(\bar{\mu}_n(A_M,p^N)-\bar{\mu}_n(A,p^N)\bigr)\bigl( y_n-y_{n+1} \bigr),
\end{multline}
\begin{multline} \label{r2}
r^{(2)}_n(y,A,t) := \frac{\gamma(2^{n+p^N_n})}{2^{n+2}} \Bigl( (1+2^{n-1}y_{n-1})^2O(q^N_{n-1}) - (1+2^ny_n)O(q^N_n) \Bigr) \\
- \bar{\mu}_n(A,p^N)\frac{\gamma(2^{n+1+p^N_{n+1}})}{2^n} \Bigl( (1+2^ny_n)^2O(q^N_n) - (1+2^{n+1}y_{n+1})O(q^N_{n+1}) \Bigr),
\end{multline}
\begin{equation} \label{r3}
r^{(3)}_n(y,A,t)
:= - (1+2^ny_n)\frac{1}{2^n}\sum_{k=n}^\infty \frac{1}{\bar{m}_n(A,p^N(t))}\frac{\partial\bar{m}_n}{\partial p_k}(A,p^N(t))\frac{\de p^N_k}{\de t}
\end{equation}
(for $n=N$, the terms containing $\bar{\mu}_n$ in $r^{(1)}_N$ and $r^{(2)}_N$ are not present). 

At this point it is important to recall that $p^N(t)=\{p^N_n(t)\}_{n\in\Z}$ and $q^N(t)=\{q^N_n(t)\}_{n\in\Z}$ are given sequences, satisfying the estimates \eqref{decaypn1}--\eqref{decayqn1} in the small time interval $(0,t^N)$. The goal is to show the existence of a pair $(y^N(t),A^N(t))$ solving \eqref{yneq}.
The linearized operator in \eqref{yneq} has exactly the form \eqref{tlinear3}, with the sequence $p^N(t)$ satisfying the assumption \eqref{asspn} in $(0,t^N)$ in view of \eqref{delta0b}; we then denote by $T^N(t;s)$ the corresponding resolvent operator, according to \eqref{tlinear4}.

By Duhamel's formula, the solution to \eqref{yneq} with a given initial datum $y^N(0)$ can be represented in terms of the solution to the linearized problem as
\begin{multline} \label{yn2}
y^N_n(t) = T^N_n(t;0)(y^N(0)) + A^N(t) - A^0 + \int_0^t \frac{\de A^N}{\de t}(s)T^N_n(t;s)(P(y^N(s)))\de s \\
+ \sum_{i=1}^{3}\int_0^t T^N_n(t;s)(r^{(i)}(y^N(s),A^N(s),s))\de s
\end{multline}
for $n\leq N$, where for notational convenience we introduced the operator
\begin{equation} \label{linearP}
P_n(y) := 2^ny_n, \qquad P(y):=\{ P_n(y) \}_{n\in\Z} \,.
\end{equation}
We select the function $A^N(t)$ by imposing that $y^N_N(t)=0$ for all $t>0$ (notice that this condition is satisfied at time $t=0$, see Lemma~\ref{lem:initialcond}): this gives the equation
\begin{multline} \label{yn3}
A^N(t) - A^0 = - T^N_N(t;0)(y^N(0)) - \int_0^t \frac{\de A^N}{\de t}(s)T^N_N(t;s)(P(y^N(s)))\de s \\
- \sum_{i=1}^{3}\int_0^t T^N_N(t;s)(r^{(i)}(y^N(s),A^N(s),s))\de s ,
\end{multline}
and by differentiating with respect to $t$
\begin{multline} \label{fp2}
\frac{\de A^N}{\de t}
= - \biggl[ \mathscr{L}^N_N(T^N(t;0)(y^N(0)),t)
+ \int_0^t \frac{\de A^N}{\de t}(s) \mathscr{L}^N_N \bigl[ T^N(t;s)( P(y^N(s)) ), t \bigr] \de s \\
+ \sum_{i=1}^3\int_0^t \mathscr{L}^N_N \bigl[ T^N(t;s)(r^{(i)}(y^N(s),A^N(s),s)), t \bigr] \de s
+ \sum_{i=1}^3 r^{(i)}_N(y^N(t),A^N(t),t)\biggr],
\end{multline}
where $\mathscr{L}^N$ denotes the linear operator on the right-hand side of \eqref{yneq}, see \eqref{tlinear3}.
In turn, by inserting \eqref{yn3} into \eqref{yn2} we have for $n\leq N$
\begin{multline}  \label{fp1}
y^N_n(t)
= \bigl[T^N_n(t;0)-T^N_N(t;0)\bigr](y^N(0)) + \int_0^t \frac{\de A^N}{\de t}(s) \bigl[T^N_n(t;s)-T^N_N(t;s)\bigr]( P(y^N(s)) )\de s \\
+ \sum_{i=1}^3 \int_0^t \bigl[T^N_n(t;s)-T^N_N(t;s)\bigr](r^{(i)}(y^N(s),A^N(s),s))\de s \,.
\end{multline}
The pair $(y^N(t),A^N(t))$ will be determined by applying a fixed point argument to the two equations \eqref{fp2}--\eqref{fp1}.
This is the content of the following proposition.

\begin{proposition} \label{prop:fp}
There exists $\delta_0>0$, depending on $M$, $L_1$, $L_2$, $L_3$, with the following property. Let $g_0$ be an initial datum satisfying the assumptions of Theorem~\ref{thm:stability} and let $g^N$ be the corresponding weak solution to the truncated problem, obtained in Section~\ref{subsect:truncation}. Assume further that the maps $t\mapsto p^N(t)$, $t\mapsto q^N(t)$ satisfy the estimates \eqref{decaypn1}, \eqref{decaypn2}, \eqref{decayqn1} for all $t\in(0,t^N)$, for some $t^N>0$.

Then there exist functions $t\mapsto (y^N(t),A^N(t))$, for $t\in(0,t^N)$, such that $y^N_N(t)\equiv0$ and
\begin{equation} \label{fp}
m^N_n(t) = \bar{m}_n(A^N(t),p^N(t))\bigl( 1 + 2^ny^N_n(t) \bigr) \qquad \text{for all $n\leq N$ and $t\in(0,t^N)$.}
\end{equation}
Moreover the following estimates hold:
\begin{equation}\label{fp3}
\begin{split}
\|y^N(t)\|_1\leq C_0\delta_0,
\qquad
\|y^N(t)\|_\beta \leq C_0\delta_0\bigl(1+ t^{-\frac{\beta-1}{\beta}}) e^{-\frac{\nu}{2} t}, \\
|A^N(t)-A_M|\leq C_0\delta_0,
\qquad
\Big| \frac{\de A^N}{\de t}(t) \Big| \leq C_0\delta_0\bigl(1+t^{-\frac{\beta-1}{\beta}}\bigr) e^{-\frac{\nu}{2}t},
\end{split}
\end{equation}
for a constant $C_0$ depending only on $M$, $L_1$, $L_2$, and $L_3$.
\end{proposition}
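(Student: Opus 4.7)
My strategy is to set up a Banach fixed point argument on $[0,t^N]$ whose unknowns are the pair $(y^N, A^N)$, built from the coupled equations \eqref{fp1}--\eqref{fp2}. The sequence $p^N(t)$ and the variances $q^N(t)$ are treated as fixed data satisfying \eqref{decaypn1}--\eqref{decayqn1}, so the truncated linearized semigroup $T^N(t;s)$ from Theorem~\ref{thm:lineartrunc} is available (this is where we need \eqref{delta0b} to ensure $p^N$ satisfies the hypothesis \eqref{asspn}). The initial data are fixed by Lemma~\ref{lem:initialcond}, which gives $\|y^N(0)\|_1\leq (1+c_0)\delta_0$ and $|A^N(0)-A_M|\leq |A^N(0)-A^0|+|A^0-A_M|\leq (1+c_0)\delta_0$.

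I work in the space $X$ of pairs $(y,A)$ with $y \in C([0,t^N];\mathcal{Y}_1)$, $y_N\equiv 0$, $y_n\equiv 0$ for $n>N$, and $A\in C^1([0,t^N];\R)$, equipped with the norm
\begin{equation*}
\|(y,A)\| := \sup_{t\in[0,t^N]} \|y(t)\|_1 + \sup_{t\in[0,t^N]}\bigl(1+t^{-\frac{\beta-1}{\beta}}\bigr)^{-1}e^{\frac{\nu}{2}t}\|y(t)\|_\beta + \sup_t |A(t)-A_M| + \sup_t\bigl(1+t^{-\frac{\beta-1}{\beta}}\bigr)^{-1}e^{\frac{\nu}{2}t}\Bigl|\tfrac{\de A}{\de t}\Bigr|,
\end{equation*}
and I look for a fixed point in the ball $\{\|(y,A)\|\leq C_0\delta_0\}$, for $C_0$ large (to be chosen depending on $M,L_1,L_2,L_3$) and $\delta_0$ small. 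The map $\Phi$ is defined by inserting $(y^N,A^N)$ into the right-hand sides of \eqref{fp1} and \eqref{fp2}, and recovering $A^N$ by integration from $A^N(0)$.

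The bulk of the work is to estimate the three remainder terms \eqref{r1}--\eqref{r3} in the two norms $\|\cdot\|_1$ and $\|\cdot\|_\beta$ (with the appropriate temporal weight), and then convolve these bounds against the linear estimates \eqref{tlinear5}--\eqref{tlinear7}. Schematically: $r^{(1)}$ is quadratic in $y^N$ and contains the factor $\bar\mu_n(A_M,p^N)-\bar\mu_n(A^N,p^N)$, which by \eqref{nearlystat6} is of order $2^n|A^N-A_M|\bar\mu_n$ and thus decays in $n$; this yields a bound of order $\delta_0\|(y,A)\|$, hence contractive for small $\delta_0$. The term $r^{(2)}$ is estimated directly using the $q^N$-decay in \eqref{decayqn1}. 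The most delicate term is $r^{(3)}$: using the pointwise bound $|\partial\bar m_n/\partial p_k|\leq c 2^{n-k}\bar m_n$ from \eqref{nearlystat6} together with \eqref{decaypn2}, one gets
\begin{equation*}
\bigl|r_n^{(3)}\bigr| \leq C\bar m_n\bigl(1+2^n|y_n^N|\bigr)\sum_{k\geq n}2^{-k}\Bigl|\tfrac{\de p^N_k}{\de t}\Bigr|\leq C\bar m_n\bigl(1+2^n|y_n^N|\bigr)L_2\delta_0\bigl(1+t^{-\bar{\theta}_1/\beta}\bigr)e^{-\frac{\nu}{2}t},
\end{equation*}
which sits in the right space thanks to $\bar\theta_1<1$.

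The main technical obstacle will be the time-weighted convolutions: when one applies \eqref{tlinear5} (with $\theta=1$, $\tilde\theta=\beta$) or \eqref{tlinear7}, one gets the singular kernel $(t-s)^{-(\beta-1)/\beta}e^{-\nu(t-s)}$ inside the integrals of \eqref{fp1}--\eqref{fp2}, which must be convolved against the remainder bounds that themselves carry a weight $(1+s^{-\bar{\theta}_1/\beta})e^{-\nu s/2}$. A direct calculation based on $\int_0^t(t-s)^{-a}s^{-b}e^{-\frac{\nu}{2}(t-s)}\de s \leq C(1+t^{-a-b+1})e^{-\frac{\nu}{4}t}$ for $a+b<1$ (which holds here because $\frac{\beta-1}{\beta}+\frac{\bar{\theta}_1}{\beta}<1$ by \eqref{theta12}) closes the estimates; the extra decay $e^{-\nu t/2}$ in one norm and the uniform-in-time control in the other are then used to verify that $\Phi$ maps the ball into itself. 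Contraction follows from the same estimates applied to the difference of two inputs, exploiting the quadratic/bilinear structure of $r^{(1)}$ and the linearity of $r^{(3)}$ in $A$ through $\bar m_n$. Uniqueness of the fixed point then yields $(y^N,A^N)$ satisfying \eqref{fp3}, and the identity \eqref{fp} follows because both sides solve \eqref{mneq2}/\eqref{mneq2N} with the same initial condition and the prescribed constraint $y_N^N\equiv 0$.
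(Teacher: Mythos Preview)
Your overall architecture matches the paper's: a Banach fixed point on the Duhamel system \eqref{fp1}--\eqref{fp2}, with $p^N,q^N$ treated as given data, the truncated semigroup $T^N$ from Theorem~\ref{thm:lineartrunc} providing the linear estimates, and the identification of the fixed point with $m^N_n$ via uniqueness of the ODE. The paper takes $\Lambda=\frac{\de A}{\de t}$ rather than $A$ as the second unknown, but this is cosmetically equivalent to your formulation.

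There is, however, a real gap in how you propose to handle the Duhamel integrals. You write that one applies \eqref{tlinear5} ``with $\theta=1$, $\tilde\theta=\beta$'', giving the kernel $(t-s)^{-(\beta-1)/\beta}$. That choice works for the homogeneous term $T^N(t;0)(y^N(0))$, since $y^N(0)\in\mathcal{Y}_1$. But the remainders $r^{(i)}$ do \emph{not} lie in $\mathcal{Y}_1$: each carries a prefactor $\gamma(2^{n+p_n})\sim 2^{\beta n}$, so for instance $|r^{(1)}_n|\sim 2^{(1-\beta)n}\|y\|_\beta^2$ for $n$ large, and $2^{n}|r^{(1)}_n|\sim 2^{(2-\beta)n}\to\infty$. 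The paper resolves this by placing each remainder in a lower-index space---$r^{(1)}$ in $\mathcal{Y}_{\beta-1}$, $r^{(2)}$ in $\mathcal{Y}_{\bar\theta_2-\beta+1}$, $r^{(3)}$ in $\mathcal{Y}_{\bar\theta_1-\beta+1}$ (this is the content of Lemma~\ref{lem:fixedpointr})---and then invoking the smoothing \eqref{tlinear6} with the corresponding $\theta$ to land back in $\mathcal{Y}_\beta$. The convolution exponents one actually meets are therefore $1/\beta$, $(2\beta-\bar\theta_2-1)/\beta$, $(2\beta-\bar\theta_1-1)/\beta$, not $(\beta-1)/\beta$, and it is the condition $\bar\theta_1,\bar\theta_2\in(\beta-1,1)$ from \eqref{theta12} that makes all of these integrable against the time-singularities in the remainder bounds. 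Your outline misses this bookkeeping; without it the estimates do not close.

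Two smaller points: in your displayed bound for $r^{(3)}$ there is a spurious factor $\bar m_n$ (the ratio $\frac{1}{\bar m_n}\frac{\partial\bar m_n}{\partial p_k}$ already absorbs it via \eqref{nearlystat6}); and you never invoke $\bar\theta_2$, which is what governs the spatial decay of $q^N$ in \eqref{decayqn1} and hence of $r^{(2)}$.
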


\begin{proof}
Along the proof, we will denote by $C$ a generic constant, possibly depending on the properties of the kernels, on $M$, $L_1$, $L_2$, and $L_3$, which might change from line to line. Let $\delta>0$ be a small parameter, to be chosen later, and let $g:(0,\infty)\to\R$ be the function
\begin{equation} \label{pfp0}
g(t) := \bigl(1+t^{-\frac{\beta-1}{\beta}}\bigr) e^{-\frac{\nu}{2} t}.
\end{equation}
Since we always deal with truncated sequences, it is convenient to denote by $\mathcal{Y}_\beta^N$ the space of sequences $y\in\mathcal{Y}_\beta$ (see \eqref{spacesequences}) such that $y_n=0$ for all $n\geq N$.
We work in the space $\mathcal{X}:=\mathcal{X}_1\times\mathcal{X}_2$, where
\begin{equation} \label{pfp1}
\mathcal{X}_1 := \Bigl\{ y\in C([0,t^N];\mathcal{Y}_\beta^N) \,:\, \|y\|_{\mathcal{X}_1}\leq \delta \Bigr\},
\qquad
\|y\|_{\mathcal{X}_1} := \sup_{0<t<t^N} \Bigl( \|y(t)\|_1 + \frac{\|y(t)\|_\beta}{g(t)}\Bigr) \,,
\end{equation}
\begin{equation} \label{pfp2}
\mathcal{X}_2 := \Bigl\{ \Lambda\in C([0,t^N];\R) \,:\, \|\Lambda\|_{\mathcal{X}_2}\leq \delta \Bigr\},
\qquad
\|\Lambda\|_{\mathcal{X}_2} := \sup_{0<t<t^N} \, \frac{|\Lambda(t)|}{g(t)} \,.
\end{equation}
For $\Lambda\in\mathcal{X}_2$ we let
\begin{equation} \label{pfp5}
A_\Lambda(t) := A^N(0) + \int_0^t \Lambda(s)\de s \,.
\end{equation}
Notice that for every $\Lambda\in\mathcal{X}_2$
\begin{equation} \label{pfp5bis}
|A_\Lambda(t)-A^N(0)| = \bigg|\int_0^t\Lambda(s)\de s\bigg| \leq \|\Lambda\|_{\mathcal{X}_2}\int_0^t g(s)\de s \leq C\delta .
\end{equation}
By combining \eqref{A0y0}, \eqref{A0Ny0N}, and \eqref{pfp5bis} we find
\begin{equation}  \label{pfp5ter}
|A_{\Lambda}(t)-A_M| \leq C\delta + c_02^{-N}\delta_0 + \delta_0 \leq C(\delta+\delta_0)
\end{equation}
and in particular we can assume without loss of generality that $|A_{\Lambda}(t)-A_M| \leq \frac{A_M}{2}$ for every $\Lambda\in\mathcal{X}_2$, provided that we choose $\delta_0$ and $\delta$ sufficiently small (depending on $M$).

We define a map $\mathcal{T}:\mathcal{X}\to\mathcal{X}$ by setting $\mathcal{T}(y,\Lambda):=(\tilde{y},\tilde{\Lambda})$, where
\begin{multline} \label{pfp3}
\tilde{y}_n(t) := \bigl[T^N_n(t;0)-T^N_N(t;0)\bigr](y^N(0)) + \int_0^t \Lambda(s) \bigl[T^N_n(t;s)-T^N_N(t;s)\bigr]( P(y(s)) )\de s \\
+ \sum_{i=1}^3 \int_0^t \bigl[T^N_n(t;s)-T^N_N(t;s)\bigr](r^{(i)}(y(s),A_\Lambda(s),s))\de s
\end{multline}
for $n\leq N$, $\tilde{y}_n(t)=0$ for $n> N$, and
\begin{multline} \label{pfp4}
\tilde{\Lambda}(t) :=
- \biggl[ \mathscr{L}^N_N(T^N(t;0)(y^N(0)),t)
+ \int_0^t \Lambda(s) \mathscr{L}^N_N \bigl[ T^N(t;s)( P(y(s)) ) , t \bigr] \de s \\
+ \sum_{i=1}^3\int_0^t \mathscr{L}^N_N \bigl[ T^N(t;s)(r^{(i)}(y(s),A_\Lambda(s),s)) , t \bigr] \de s
+ \sum_{i=1}^3 r^{(i)}_N(y(t),A_\Lambda(t),t)\biggr].
\end{multline}
The rest of the proof amounts to showing that the map $\mathcal{T}$ is a contraction in $\mathcal{X}$, provided that $\delta$ and $\delta_0$ are chosen small enough.

\smallskip\noindent\textit{Step 1: $\mathcal{T}(y,\Lambda)\in\mathcal{X}$ for every $(y,\Lambda)\in\mathcal{X}$.}
Since $y^N(0)\in\mathcal{Y}_1$, by Theorem~\ref{thm:lineartrunc} we have (using also \eqref{A0y0}, \eqref{A0Ny0N})
\begin{equation} \label{pfp10}
\begin{split}
\big\| \bigl[ T^N(t;0)-T^N_N(t;0) \bigr] (y^N(0)) \big\|_\beta & \leq C_2(M,1,\beta)\|y^N(0)\|_1 t^{-\frac{\beta-1}{\beta}} e^{-\nu t} \leq C\delta_0 g(t), \\
|\mathscr{L}^N_N(T^N(t;0)(y^N(0)),t)| & \leq C_2(M,1,\beta)\|y^N(0)\|_1 t^{-\frac{\beta-1}{\beta}} e^{-\nu t} \leq C\delta_0 g(t).
\end{split}
\end{equation}
Similarly, since $y(s)\in\mathcal{Y}_\beta$ we have $P(y(s))\in\mathcal{Y}_{\beta-1}$ for every positive $s$, with
\begin{equation} \label{pfp13}
\|P(y(s))\|_{\beta-1} \leq \|y(s)\|_\beta,
\end{equation}
and it follows, again by Theorem~\ref{thm:lineartrunc}, that
\begin{equation} \label{pfp11}
\begin{split}
\big\| \bigl[ T^N(t;s)-T^N_N(t;s)\bigr] (P(y(s))) \big\|_\beta &\leq C_2(M,\beta-1,\beta)\|y(s)\|_\beta (t-s)^{-\frac{1}{\beta}} e^{-\nu(t-s)} \\
& \leq C \delta g(s) (t-s)^{-\frac{1}{\beta}} e^{-\nu(t-s)}, \\
\big|\mathscr{L}^N_N \bigl[ T^N(t;s)(P(y(s))),t \bigr] \big| &\leq C \delta g(s) (t-s)^{-\frac{1}{\beta}} e^{-\nu(t-s)} .
\end{split}
\end{equation}
In the same way, we can use the bounds \eqref{pfp30} on the remainders $r^{(i)}$ proved in Lemma~\ref{lem:fixedpointr} below (the assumptions of the lemma are satisfied in view of \eqref{pfp5ter} and $\|y(t)\|_1\leq\delta$), together with Theorem~\ref{thm:lineartrunc}, to obtain the following estimates:
\begin{equation} \label{pfp12a}
\begin{split}
\big\| \bigl[ T^N(t;s)-&T^N_N(t;s)\bigr] (r^{(1)}(y(s),A_\Lambda(s),s)) \big\|_\beta \\
& \leq C_2(M,\beta-1,\beta)\|r^{(1)}(y(s),A_\Lambda(s),s)\|_{\beta-1} (t-s)^{-\frac{1}{\beta}} e^{-\nu(t-s)} \\
& \leq C \Bigl( \|y(s)\|_{\beta}^2 + |A_{\Lambda}(s)-A_M|\|y(s)\|_\beta \Bigr) (t-s)^{-\frac{1}{\beta}} e^{-\nu(t-s)} \\
& \leq C \Bigl( \delta^2(g(s))^2 + \bigl( \delta + \delta_0 \bigr)\delta g(s) \Bigr) (t-s)^{-\frac{1}{\beta}}e^{-\nu(t-s)} ,
\end{split}
\end{equation}
\begin{equation} \label{pfp12b}
\begin{split}
\big\| \bigl[ T^N&(t;s)-T^N_N(t;s)\bigr] (r^{(2)}(y(s),A_\Lambda(s),s)) \big\|_\beta \\
& \leq C_2(M,\bar{\theta}_2-\beta+1,\beta)\|r^{(2)}(y(s),A_\Lambda(s),s)\|_{\bar{\theta}_2-\beta+1} (t-s)^{-\frac{2\beta-\bar{\theta}_2-1}{\beta}} e^{-\nu(t-s)} \\
& \leq C\delta_0 s^{-\bar{\theta}_2/\beta} (t-s)^{-\frac{2\beta-\bar{\theta}_2-1}{\beta}} e^{-\frac{\nu}{2} s}e^{-\nu(t-s)},
\end{split}
\end{equation}
\begin{equation} \label{pfp12c}
\begin{split}
\big\| \bigl[ T^N&(t;s)-T^N_N(t;s)\bigr] (r^{(3)}(y(s),A_\Lambda(s),s)) \big\|_\beta \\
& \leq C_2(M,\bar{\theta}_1-\beta+1,\beta)\|r^{(3)}(y(s),A_\Lambda(s),s)\|_{\bar{\theta}_1-\beta+1} (t-s)^{-\frac{2\beta-\bar{\theta}_1-1}{\beta}} e^{-\nu(t-s)} \\
& \leq C\delta_0 s^{-\bar{\theta}_1/\beta} (t-s)^{-\frac{2\beta-\bar{\theta}_1-1}{\beta}} e^{-\frac{\nu}{2} s}e^{-\nu(t-s)} .
\end{split}
\end{equation}
The same estimates hold for the terms $\mathscr{L}^N_N [ T^N(t;s)(r^{(i)}(y(s),A_\Lambda(s),s)) , t ]$, $i=1,2,3$.

By plugging the bounds \eqref{pfp10}, \eqref{pfp11}, \eqref{pfp12a}, \eqref{pfp12b}, \eqref{pfp12c} into \eqref{pfp3} we find
\begin{equation} \label{pfp16}
\begin{split}
\|\tilde{y}(t)\|_\beta
& \leq C\delta_0 g(t) + C\delta^2 \int_0^t \bigl(g(s)\bigr)^2 (t-s)^{-\frac{1}{\beta}} e^{-\nu(t-s)} \de s \\
& \qquad + C\bigl( \delta + \delta_0 \bigr)\delta \int_0^t g(s)(t-s)^{-\frac{1}{\beta}} e^{-\nu(t-s)}\de s \\
& \qquad + C\delta_0\sum_{i=1}^2 \int_0^t s^{-\bar{\theta}_i/\beta}(t-s)^{-\frac{2\beta-\bar{\theta}_i-1}{\beta}}  e^{-\frac{\nu}{2} s} e^{-\nu(t-s)} \de s \\
& \leq C\bigl( \delta_0 + \delta^2 + \delta_0\delta \bigr)g(t),
\end{split}
\end{equation}
where we used the elementary estimates
\begin{equation} \label{pfp15}
\begin{split}
\int_0^t \bigl(g(s)\bigr)^2 (t-s)^{-\frac{1}{\beta}} e^{-\nu(t-s)} \de s &\leq C g(t),\\
\int_0^t g(s) (t-s)^{-\frac{1}{\beta}} e^{-\nu(t-s)}\de s &\leq C g(t),\\
\int_0^t s^{-\bar{\theta}_i/\beta}(t-s)^{-\frac{2\beta-\bar{\theta}_i-1}\beta}e^{-\frac{\nu}{2} s}e^{-\nu(t-s)} \de s & \leq Cg(t) \qquad(i=1,2),
\end{split}
\end{equation}
for $C$ depending only on $\beta$, $\bar{\theta}_1$, $\bar{\theta}_2$, $\nu$ (recall the assumption \eqref{theta12}). In particular, by choosing $\delta$ and $\delta_0$ sufficiently small, depending ultimately only on $M$ (with $\delta_0$ depending on $\delta$, and $\delta_0\sim\delta$), \eqref{pfp16} yields $\|\tilde{y}(t)\|_\beta\leq \frac{\delta}{2}g(t)$.

In order to control $\|\tilde{y}(t)\|_1$, by a similar procedure we repeatedly apply Theorem~\ref{thm:lineartrunc} and we use the bounds \eqref{pfp30}: we find from \eqref{pfp3}
\begin{equation*}
\begin{split}
\|\tilde{y}(t)\|_1
& \leq C_2(M,1,1)\|y^N(0)\|_1e^{-\nu t} + C_2(M,0,1)\delta \int_0^t g(s) \|y(s)\|_1 (t-s)^{-\frac{1}{\beta}}e^{-\nu(t-s)}\de s \\
& \qquad + C_2(M,\beta-1,1) \int_0^t \|r^{(1)}(y(s),A_\Lambda(s),s)\|_{\beta-1} (t-s)^{-\frac{2-\beta}{\beta}}e^{-\nu(t-s)}\de s\\
& \qquad + C_2(M,\bar{\theta}_2-\beta+1,1)\int_0^t \|r^{(2)}(y(s),A_\Lambda(s),s)\|_{\bar{\theta}_2-\beta+1} (t-s)^{-\frac{\beta-\bar{\theta}_2}{\beta}}e^{-\nu(t-s)}\de s\\
& \qquad + C_2(M,\bar{\theta}_1-\beta+1,1)\int_0^t \|r^{(1)}(y(s),A_\Lambda(s),s)\|_{\bar{\theta}_1-\beta+1} (t-s)^{-\frac{\beta-\bar{\theta}_1}{\beta}}e^{-\nu(t-s)}\de s\\
& \leq C\delta_0 + C\delta^2\int_0^t g(s)(t-s)^{-\frac{1}{\beta}}e^{-\nu(t-s)}\de s \\
& \qquad + C \int_0^t \Bigl[ \delta^2\bigl(g(s)\bigr)^2 + (\delta+\delta_0)\delta g(s) \Bigr] (t-s)^{-\frac{2-\beta}{\beta}}e^{-\nu(t-s)}\de s  \\
& \qquad + C \delta_0 \sum_{i=1}^2\int_0^t s^{-\bar{\theta}_i/\beta} e^{-\frac{\nu}{2} s} (t-s)^{-\frac{\beta-\bar{\theta}_i}{\beta}}e^{-\nu(t-s)}\de s \,.
\end{split}
\end{equation*}
By estimates similar to \eqref{pfp15} we then find $\|\tilde{y}(t)\|_1\leq C( \delta_0 + \delta^2 )\leq\frac{\delta}{2}$, provided that we choose $\delta$ and $\delta_0$ sufficiently small; this, combined with the previous estimate, gives $\|\tilde{y}\|_{\mathcal{X}_1} \leq \delta$.

We proceed similarly for $\tilde{\Lambda}$: the first three terms on the right-hand side of \eqref{pfp4} can be estimated exactly in the same way, by using \eqref{pfp10}, \eqref{pfp11}, and the equivalent of \eqref{pfp12a}--\eqref{pfp12c} for $\mathscr{L}^N_N [ T^N(t;s)(r^{(i)}(y(s),A_\Lambda(s),s)) , t ]$. The only novelty is the last term in \eqref{pfp4}, which can be controlled thanks to \eqref{pfp30b}. In this way one obtains an estimate of the form
\begin{equation*}
|\tilde{\Lambda}(t)| \leq C\bigl( \delta_0 + \delta^2 + \delta_0\delta \bigr)g(t)
\end{equation*}
and in turn $\|\tilde{\Lambda}\|_{\mathcal{X}_2} \leq \delta$. Hence we can conclude that $\mathcal{T}$ maps $\mathcal{X}$ into itself.

\smallskip\noindent\textit{Step 2: contractivity.}
Let $(y^1,\Lambda^1),(y^2,\Lambda^2)\in\mathcal{X}$ and set $(\tilde{y}^i,\tilde{\Lambda}^i):=\mathcal{T}(y^i,\Lambda^i)$, $i=1,2$. In view of the definition \eqref{pfp3} of $\tilde{y}^i$ we have
\begin{align} \label{pfp14}
\big| \tilde{y}_n^1(t)-\tilde{y}_n^2&(t) \big| \leq \int_0^t \big| \Lambda^1(s)-\Lambda^2(s)\big| \big|T_n^N(t;s)-T_N^N(t;s)\big|( P(y^1(s)) )\de s \nonumber\\
& +  \int_0^t |\Lambda^2(s)| \big|T_n^N(t;s)-T^N_N(t;s)\big| \bigl( P(y^1(s)-y^2(s)) \bigr)\de s \\
& + \sum_{i=1}^3\int_0^t \big|T_n^N(t;s)-T_N^N(t;s)\big| \bigl( r^{(i)}(y^1(s),A_{\Lambda^1}(s),s)-r^{(i)}(y^2(s),A_{\Lambda^2}(s),s) \bigr)\de s \,. \nonumber
\end{align}
The first two integrals can be estimated using \eqref{pfp11}; for the last integral containing the remainders, similarly to \eqref{pfp12a}--\eqref{pfp12c} we find, using \eqref{pfp31} in Lemma~\ref{lem:fixedpointr} below,
\begin{equation*}
\begin{split}
\big\| \bigl(T^N&(t;s)-T_N^N(t;s)\bigr) \bigl( r^{(1)}(y^1(s),A_{\Lambda^1}(s),s)-r^{(1)}(y^2(s),A_{\Lambda^2}(s),s) \bigr) \big\|_\beta \\
& \leq C_2(M,\beta-1,\beta)\| r^{(1)}(y^1(s),A_{\Lambda^1}(s),s)-r^{(1)}(y^2(s),A_{\Lambda^2}(s),s) \|_{\beta-1} (t-s)^{-\frac{1}{\beta}} e^{-\nu(t-s)} \\
& \leq C \biggl[ \|y^1-y^2\|_{\mathcal{X}_1}g(s) \Bigl( \delta g(s) + (\delta + \delta_0) \Bigr) \\
& \qquad\qquad + \|\Lambda^1-\Lambda^2\|_{\mathcal{X}_2} \Bigl( \delta^2\bigl(g(s)\bigr)^2 + \delta g(s)\Bigr) \biggr] (t-s)^{-\frac{1}{\beta}} e^{-\nu(t-s)} \,,
\end{split}
\end{equation*}
where we used the bound $|A_{\Lambda^1}(s)-A_{\Lambda^2}(s)|\leq C\|\Lambda^1-\Lambda^2\|_{\mathcal{X}_2}$, which follows from \eqref{pfp5}; and
\begin{equation*}
\begin{split}
\big\| \bigl(T^N&(t;s)-T_N^N(t;s)\bigr) \bigl( r^{(2)}(y^1(s),A_{\Lambda^1}(s),s)-r^{(2)}(y^2(s),A_{\Lambda^2}(s),s) \bigr) \big\|_\beta \\
& \leq C_2\| r^{(2)}(y^1(s),A_{\Lambda^1}(s),s)-r^{(2)}(y^2(s),A_{\Lambda^2}(s),s) \|_{\bar{\theta}_2-\beta+1} (t-s)^{-\frac{2\beta-\bar{\theta}_2-1}{\beta}} e^{-\nu(t-s)} \\
& \leq C\delta_0 \Bigl( \|y^1-y^2\|_{\mathcal{X}_1} + \|\Lambda^1-\Lambda^2\|_{\mathcal{X}_2} \Bigr) s^{-\bar{\theta}_2/\beta} e^{-\frac{\nu}{2}s} (t-s)^{-\frac{2\beta-\bar{\theta}_2-1}{\beta}} e^{-\nu(t-s)}
\end{split}
\end{equation*}
(the same estimate holds for $r^{(3)}$, with $\bar{\theta}_1$ in place of $\bar{\theta}_2$). Hence from \eqref{pfp14} it is straightforward to obtain an estimate of the form
\begin{multline*}
\big\| \tilde{y}^1(t) - \tilde{y}^2(t) \big\|_{\beta}
\leq C(\delta+\delta_0) \Bigl( \|y^1-y^2\|_{\mathcal{X}_1} + \|\Lambda^1-\Lambda^2\|_{\mathcal{X}_2} \Bigr) \\
\biggl( \int_0^t \Bigl[ \bigl(g(s)\bigr)^2 + g(s) \Bigr] (t-s)^{-\frac{1}{\beta}} e^{-\nu(t-s)} \de s + \sum_{i=1}^2\int_0^t  s^{-\bar{\theta}_i/\beta} e^{-\frac{\nu}{2}s} (t-s)^{-\frac{2\beta-\bar{\theta}_i-1}{\beta}} e^{-\nu(t-s)} \de s \biggr),
\end{multline*}
which in turn yields, recalling \eqref{pfp15},
\begin{equation*}
\| \tilde{y}^1(t) - \tilde{y}^2(t) \|_{\beta} \leq C(\delta+\delta_0) \Bigl( \|y^1-y^2\|_{\mathcal{X}_1} + \|\Lambda^1-\Lambda^2\|_{\mathcal{X}_2} \Bigr) g(t).
\end{equation*}
In a similar way we obtain an estimate for $\|\tilde{y}^1(t)-\tilde{y}^2(t)\|_1$, which combined with the previous one gives
\begin{equation} \label{pfp20}
\big\| \tilde{y}^1 - \tilde{y}^2 \big\|_{\mathcal{X}_1} \leq C(\delta+\delta_0) \Bigl( \|y^1-y^2\|_{\mathcal{X}_1} + \|\Lambda^1-\Lambda^2\|_{\mathcal{X}_2} \Bigr) .
\end{equation}
Starting from the inequality
\begin{align*}
\big| \tilde{\Lambda}^1(t) - \tilde{\Lambda}^2(t) \big|
& \leq \int_0^t \big| \Lambda^1(s)-\Lambda^2(s)\big| \big| \mathscr{L}^N_N\bigl[T^N(t;s)(P(y^1(s))),t\bigr] \big| \de s \\
& +  \int_0^t |\Lambda^2(s)| \big| \mathscr{L}^N_N\bigl[T^N(t;s)(P(y^1(s)-y^2(s))),t\bigr] \big| \de s \\
& + \sum_{i=1}^3 \int_0^t \big| \mathscr{L}^N_N\bigl[ T^N(t;s) \bigl( r^{(i)}(y^1(s),A_{\Lambda^1}(s),s)-r^{(i)}(y^2(s),A_{\Lambda^2}(s),s) \bigr) , t \bigr] \big| \de s\\
& + \sum_{i=1}^3 \big| r^{(i)}_N(y^1(t),A_{\Lambda^1}(t),t)-r^{(i)}_N(y^2(t),A_{\Lambda^2}(t),t) \big| \,,
\end{align*}
the same argument (using also \eqref{pfp31b} in Lemma~\ref{lem:fixedpointr}) shows that
\begin{equation} \label{pfp21}
\big\| \tilde{\Lambda}^1 - \tilde{\Lambda}^2 \big\|_{\mathcal{X}_1} \leq C(\delta+\delta_0) \Bigl( \|y^1-y^2\|_{\mathcal{X}_1} + \|\Lambda^1-\Lambda^2\|_{\mathcal{X}_2} \Bigr)  .
\end{equation}
Therefore by \eqref{pfp20}--\eqref{pfp21} it follows that the map $\mathcal{T}$ is a contraction in the space $\mathcal{X}$, provided that $\delta$ and $\delta_0$ are small enough.

\smallskip\noindent\textit{Step 3: conclusion.}
In view of the previous steps, Banach's fixed point theorem yields the existence of a unique pair $(y^N,\Lambda^N)$ in the space $\mathcal{X}$ such that $(y^N,\Lambda^N)=\mathcal{T}(y^N,\Lambda^N)$; that is, denoting by $A^N(t):=A^N(0)+\int_0^t\Lambda^N(s)\de s$, the maps $t\mapsto y^N(t)$, $t\mapsto A^N(t)$ satisfy the two equations \eqref{fp2}--\eqref{fp1}. Moreover by construction $y^N_n(t)=0$ for all $n\geq N$.

Now, by integrating \eqref{fp2} in $(0,t)$ we have
\begin{equation*}
\begin{split}
A^N(t) - A^N(0)
& = - \int_0^t \frac{\de}{\de s} \bigl[ T^N_N(s;0)(y^N(0))\bigr] \de s \\
& \qquad\qquad - \int_0^t \de s \int_0^s \frac{\de A^N}{\de\xi}(\xi) \frac{\de}{\de s}\bigl[ T_N^N(s;\xi)( P(y^N(\xi)) ) \bigr] \de\xi \\
& \qquad\qquad - \sum_{i=1}^3\int_0^t \de s \int_0^s \frac{\de}{\de s}\bigl[ T_N^N(s;\xi)(r^{(i)}(y^N(\xi),A^N(\xi),\xi)) \bigr] \de\xi \\
& \qquad\qquad - \sum_{i=1}^3\int_0^t r^{(i)}_N(y^N(s),A^N(s),s)\de s \\
& = y^N_N(0) - T_N^N(t;0)(y^N(0)) \\
& \qquad\qquad + \int_0^t \frac{\de A^N}{\de\xi}(\xi) \Bigl( 2^Ny^N_N(\xi) - T_N^N(t;\xi)( P(y^N(\xi)) ) \Bigr) \de\xi \\
& \qquad\qquad + \sum_{i=1}^3\int_0^t \Bigl( r^{(i)}_N(y^N(\xi),A^N(\xi),\xi) - T^N_N(t;\xi)(r^{(i)}(y^N(\xi),A^N(\xi),\xi)) \Bigr) \de \xi \\
& \qquad\qquad - \sum_{i=1}^3\int_0^t r^{(i)}_N(y^N(s),A^N(s),s)\de s \,.
\end{split}
\end{equation*}
Recalling that $y^N_N(t)=0$ for all $t\geq0$, we see that the previous equation is exactly the identity \eqref{yn3}. Finally, combining \eqref{yn3} and \eqref{fp1}, we conclude that the pair $t\mapsto(y^N(t),A^N(t))$ satisfies \eqref{yn2}, and, in turn, \eqref{yneq}.

Finally, if we define the quantities
\begin{equation*}
\tilde{m}^N_n(t) := \bar{m}_n(A^N(t),p^N(t))\bigl(1+2^ny_n^N(t)\bigr) \qquad \text{for all }n\leq N,
\end{equation*}
we see that \eqref{yneq} implies that the functions $\tilde{m}^N_n(t)$ satisfy the same evolution equation as $m^N_n(t)$, with the same initial datum $\tilde{m}^N_n(0)=m^N_n(0)$ (see Lemma~\ref{lem:initialcond}). Therefore by uniqueness of the solution to this system of ODEs (which follows from the fact that we are considering a truncated problem) we conclude that $\tilde{m}^N(t)=m^N(t)$, that is the property in the statement holds.
\end{proof}

The following lemma contains the main estimates on the remainders needed in the proof of Proposition~\ref{prop:fp}.

\begin{lemma} \label{lem:fixedpointr}
Let $y\in\mathcal{Y}_\beta$, $A>0$, and let $r^{(i)}(y,A,t)$, $i=1,2,3$, be the sequences defined by \eqref{r1}, \eqref{r2}, and \eqref{r3}.
Assume also that $\|y\|_1\leq1$ and $A\geq \frac12A_M$. Then there exists a constant $C_3$, depending on $M$, $L_1$, $L_2$, $L_3$, such that
\begin{equation} \label{pfp30}
\begin{split}
\|r^{(1)}(y,A,t)\|_{\beta-1} &\leq C_3 \|y\|_{\beta}^2 + C_3 |A_M-A|\|y\|_\beta \,, \\
\|r^{(2)}(y,A,t)\|_{\bar{\theta}_2-\beta+1} & \leq C_3 \delta_0^{3/2} t^{-\bar{\theta}_2/\beta} e^{-\frac{\nu}{2} t} \,, \\
\|r^{(3)}(y,A,t)\|_{\bar{\theta}_1-\beta+1} &\leq C_3 \delta_0 t^{-\bar{\theta}_1/\beta} e^{-\frac{\nu}{2}t} \,,
\end{split}
\end{equation}
and
\begin{equation} \label{pfp30b}
\begin{split}
|r^{(1)}_N(y,A,t)| & \leq C_3 \|y\|_\beta\|y\|_1 \,,\\
|r^{(2)}_N(y,A,t)| & \leq C_3 \delta_0^{3/2} t^{-\frac{\beta-1}{\beta}}e^{-\nu t}\,, \\
|r^{(3)}_N(y,A,t)| & \leq C_3 \delta_0 t^{-\frac{\beta-1}{\beta}}e^{-\frac{\nu}{2} t}\,.
\end{split}
\end{equation}
Furthermore, for every $y^1,y^2\in\mathcal{Y}_\beta$ with $\|y^i\|_1\leq1$ and $A^1,A^2>\frac12 A_M$ we have
\begin{align} \label{pfp31}
\| r^{(1)}(y^1,A^1,t)-r^{(1)}(y^2,A^2,t) \|_{\beta-1} &\leq C_3 \|y^1-y^2\|_\beta \Bigl( \max\{ \|y^1\|_\beta, \|y^2\|_\beta\} + |A_M-A^1| \Bigr) \nonumber\\
& \qquad + C_3|A^1-A^2| \Bigl( \|y^2\|_\beta^2 + \|y^2\|_\beta \Bigr) \,, \nonumber \\
\| r^{(2)}(y^1,A^1,t)-r^{(2)}(y^2,A^2,t) \|_{\bar\theta_2-\beta+1} &\leq C_3\delta_0^{3/2} \Bigl( \|y^1-y^2\|_1 + |A^1-A^2| \Bigr) t^{-\bar\theta_2/\beta}e^{-\frac{\nu}{2} t}\,, \\
\| r^{(3)}(y^1,A^1,t)-r^{(3)}(y^2,A^2,t) \|_{\bar{\theta}_1-\beta+1} &\leq C_3\delta_0\|y^1-y^2\|_1 t^{-\bar{\theta}_1/\beta}e^{-\frac{\nu}{2}t} \,, \nonumber
\end{align}
and 
\begin{equation} \label{pfp31b}
\begin{split}
|r^{(1)}_N(y^1,A^1,t)-r^{(1)}_N(y^2,A^2,t)| & \leq C_3\bigl(\|y^1\|_1+\|y^2\|_1\bigr) \|y^1-y^2\|_\beta \,,\\
|r^{(2)}_N(y^1,A^1,t)-r^{(2)}_N(y^2,A^2,t)| & \leq C_3 \delta_0^2 \|y^1-y^2\|_\beta\,, \\
|r^{(3)}_N(y^1,A^1,t)-r^{(3)}_N(y^2,A^2,t)| & \leq C_3 \delta_0 \|y^1-y^2\|_\beta\,.
\end{split}
\end{equation}
\end{lemma}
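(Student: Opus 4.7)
The proof bounds each remainder term-by-term, using the kernel asymptotics \eqref{kernel2}--\eqref{kernel5quater}, Lemma~\ref{lem:nearlystat}, and the assumed decays \eqref{decaypn1}--\eqref{decayqn1} on $p^N,q^N,\tfrac{\de p^N}{\de t}$. Each $r^{(i)}$ has a distinct algebraic structure: $r^{(1)}$ is quadratic in $y$ with an $A$-perturbation through $\bar\mu_n(A_M,p^N)-\bar\mu_n(A,p^N)$; $r^{(2)}$ is linear in $y$ multiplied by the second moments $q^N$; $r^{(3)}$ arises from contracting the $p$-derivatives of $\bar m_n$ against $\tfrac{\de p^N}{\de t}$. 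After obtaining a pointwise bound for $|r^{(i)}_n|$, one weights by $2^n$ (for $n\leq 0$) or by $2^{\tilde\theta n}$ (for $n>0$, with $\tilde\theta$ the exponent of the claimed norm) and verifies uniform boundedness. All the Lipschitz bounds follow by applying the same decompositions to differences.

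For $r^{(1)}$, the quadratic part $\tfrac{2^n\gamma}{4}\bigl[\tfrac14 y_{n-1}^2 - 4\bar\mu_n\tfrac{\gamma(2^{n+1+p_{n+1}})}{\gamma(2^{n+p_n})} y_n^2\bigr]$ is controlled using $|y_k|\leq 2^{-\beta k}\|y\|_\beta$ for $k>0$ (the exponential decay $\bar\mu_n\lesssim e^{-A 2^n}$ tempers the second term) and $|y_k|\leq 2^{-k}\|y\|_\beta$ for $k\leq 0$; weighting by the $\|\cdot\|_{\beta-1}$-factor yields $\|y\|_\beta^2$. For the $A$-perturbation $2\gamma(2^{n+1+p_{n+1}})\bigl[\bar\mu_n(A_M,p^N)-\bar\mu_n(A,p^N)\bigr](y_n-y_{n+1})$, the identity $\tfrac{\partial\bar\mu_n}{\partial A}=-2^n\bar\mu_n$ together with the fundamental theorem of calculus gives $|\bar\mu_n(A_M,p^N)-\bar\mu_n(A,p^N)|\leq C|A-A_M|\cdot 2^n\bar\mu_n$, from which the $|A-A_M|\|y\|_\beta$ term is immediate. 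For $r^{(2)}$, use $\|y\|_1\leq 1$ to bound $(1+2^n y_n)$ uniformly, reducing to $|r^{(2)}_n|\lesssim \tfrac{\gamma(2^n)}{2^n}\sum_{|j-n|\leq 1}|q^N_j|$. At $n\leq 0$ the uniform bound $|q^N_k|\leq 8\delta_0^{3/2}e^{-\nu t}$ combined with $\gamma(2^n)/2^n\sim 2^{-n}$ and the weight $2^n$ yields the claimed $\delta_0^{3/2}t^{-\bar\theta_2/\beta}e^{-\nu t/2}$ decay (after absorbing the $t^{-\bar\theta_2/\beta}$ factor at small $t$ and $e^{-\nu t/2}$ at large $t$); at $n>0$, the strengthened decay $|q^N_k|\lesssim \delta_0^{3/2}\,2^{-\bar\theta_2 k}t^{-\bar\theta_2/\beta}e^{-\nu t}$ pairs with $\gamma(2^n)/2^n\sim 2^{(\beta-1)n}$ to reproduce the $\|\cdot\|_{\bar\theta_2-\beta+1}$-bound.

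For $r^{(3)}$, Lemma~\ref{lem:nearlystat} (see also \eqref{nearlystat7}) gives $\bigl|\tfrac{1}{\bar m_n}\tfrac{\partial\bar m_n}{\partial p_k}\bigr|\leq c\cdot 2^{n-k}$ for $k\geq n$, reducing the estimate to
\[
|r^{(3)}_n| \lesssim (1+|y_n|)\sum_{k=n}^\infty 2^{-k}\Big|\tfrac{\de p^N_k}{\de t}\Big|.
\]
The tail $k>0$ converges geometrically against $\|\tfrac{\de p^N}{\de t}\|_{\bar\theta_1-\beta}$, since $\bar\theta_1>\beta-1$ makes $\sum_{k\geq 1}2^{(\beta-\bar\theta_1-1)k}<\infty$. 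The head $k\leq 0$ is the main technical point: the norm $\|\tfrac{\de p^N}{\de t}\|_{-\beta}$ is too weak to control the head sum directly, and one must return to the evolution equation \eqref{pneq3} and combine \eqref{decaypn1}--\eqref{decayqn1} with the trivial $|p^N_k|\leq\delta_0$ to extract the pointwise bound $\bigl|\tfrac{\de p^N_k}{\de t}\bigr|\lesssim \delta_0(1+t^{-\bar\theta_1/\beta})e^{-\nu t/2}$ at $k\leq 0$; the geometric sum $\sum_{k=n}^0 2^{-k}=O(2^{-n})$ then cancels exactly against the weight $2^n$ used in the $\|\cdot\|_{\bar\theta_1-\beta+1}$-norm at $n\leq 0$.

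The Lipschitz estimates \eqref{pfp31}--\eqref{pfp31b} follow by applying the same decompositions to the differences: quadratic-in-$y$ terms factor as $(y^1-y^2)(y^1+y^2)$; the $A$-dependence in $r^{(1)}$ via $\bar\mu_n(A,p^N)$ linearises through $\tfrac{\partial\bar\mu_n}{\partial A}=-2^n\bar\mu_n$. A useful structural observation is that the coefficients $\tfrac{1}{\bar m_n}\tfrac{\partial\bar m_n}{\partial p_k}$ appearing in $r^{(3)}$ are independent of $A$, as can be read off \eqref{nearlystat7}: the $A$-dependence of $\bar m_n$ is purely through the factor $e^{-A 2^n}$, which drops out in the logarithmic derivative. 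This explains why $|A^1-A^2|$ does not appear on the right-hand side of the $r^{(3)}$-Lipschitz estimate. The evaluations at $n=N$ in \eqref{pfp30b}, \eqref{pfp31b} are merely the specialisations of the above pointwise bounds. The principal obstacle throughout is the $r^{(3)}$ head-sum; a proof based purely on the given norm estimates of $\tfrac{\de p^N}{\de t}$ would fail, and one must leverage the algebraic structure of \eqref{pneq3} to recover pointwise decay at large negative $n$.
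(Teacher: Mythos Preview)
Your overall strategy matches the paper's: each $r^{(i)}$ is bounded pointwise using the kernel growth conditions, the explicit structure of $\bar\mu_n$ and of $\frac{1}{\bar m_n}\frac{\partial\bar m_n}{\partial p_k}$ from Lemma~\ref{lem:nearlystat}, and the assumed decays \eqref{decaypn1}--\eqref{decayqn1}. Your treatment of $r^{(1)}$ and $r^{(2)}$, the Lipschitz estimates, the interpolation device for the $n=N$ evaluations, and the observation that $\frac{1}{\bar m_n}\frac{\partial\bar m_n}{\partial p_k}$ is independent of $A$ all agree with the paper's argument.

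There is, however, a gap in your handling of the $r^{(3)}$ head sum. You correctly flag $\sum_{k=n}^0 2^{-k}\bigl|\frac{\de p^N_k}{\de t}\bigr|$ as the delicate point and propose to extract from \eqref{pneq3} the uniform pointwise bound $\bigl|\frac{\de p^N_k}{\de t}\bigr|\lesssim \delta_0(1+t^{-\bar\theta_1/\beta})e^{-\nu t/2}$ for $k\leq 0$. This does not follow from the ingredients you cite. The leading contribution to $\frac{\de p^N_k}{\de t}$ in \eqref{pneq3} at $k\leq 0$ is of the form $\frac{\gamma_0}{4}(p^N_{k-1}-p^N_k)$; the norm bound \eqref{decaypn1} gives only $|p^N_{k-1}-p^N_k|\leq 2^{-k}\cdot 2L_1\delta_0 e^{-\nu t/2}$, which grows as $k\to-\infty$, while the trivial bound $|p^N_j|\leq\delta_0$ gives $|p^N_{k-1}-p^N_k|\leq 2\delta_0$ with no time decay. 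Neither of these, nor their minimum, yields a bound that is simultaneously uniform in $k\leq 0$ and exponentially decaying in $t$. Without the time decay, the head sum only gives $2^n|r^{(3)}_n|\lesssim\delta_0$, and the corresponding Duhamel integral in the proof of Proposition~\ref{prop:fp} (cf.\ \eqref{pfp12c}, \eqref{pfp16}) fails to produce the required $g(t)$ factor. The paper's proof treats this step tersely, citing only \eqref{decaypn2} and writing the sum $\sum_{k=n\wedge 1}^0 2^{-k}$; you have put your finger on a genuinely subtle point, but the resolution you sketch does not close.
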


\begin{proof}
Along the proof, the symbol $\lesssim$ will be used for inequalities up to constants which can depend only on the properties of the kernels, on $M$, $L_1$, $L_2$, $L_3$. We first consider the remainder $r^{(1)}$. The estimates \eqref{pfp30} and \eqref{pfp31} are proved in \cite[Lemma~6.3]{BNVd} (with minor modifications). For \eqref{pfp30b} and \eqref{pfp31b}, it is sufficient to observe that for $n=N$ the expression of $r^{(1)}_N$ simplifies and yields (using \eqref{kernel5})
\begin{equation*}
|r^{(1)}_N(y,A,t)| = \frac{2^N\gamma(2^{N+p^N_N(t)})}{16}y_{N-1}^2 \lesssim 2^{(\beta+1)N}y_{N-1}^2 \lesssim \|y\|_{\beta}\|y\|_1,
\end{equation*}
\begin{equation*}
\begin{split}
|r^{(1)}_N(y^1,A^1,t)-r^{(1)}_N(y^2,A^2,t)|
& \lesssim 2^{(\beta+1)N} |y^1_{N-1}-y^2_{N-1}| |y^1_{N-1}+y^2_{N-1}| \\
& \lesssim \bigl(\|y^1\|_1+\|y^2\|_1\bigr) \|y^1-y^2\|_\beta\,.
\end{split}
\end{equation*}

We next consider the term $r^{(2)}$. We have for $n<0$, using the bound $\|y\|_1\leq 1$, \eqref{kernel5bis}, the asymptotics of $\bar{\mu}_n$ as $n\to-\infty$, and the estimate \eqref{decayqn1},
\begin{equation*}
\begin{split}
|r^{(2)}_n(y,A,t)|
&\lesssim 2^{-n}\bigl( |q^N_{n-1}(t)| + |q^N_n(t)| \bigr) + 2^{-n}\bar{\mu}_n(A,p^N(t)) \bigl( |q^N_n(t)| + |q^N_{n+1}(t)|\bigr) \\
&\lesssim \delta_0^{3/2} 2^{-n}e^{-\nu t}.
\end{split}
\end{equation*}
Similarly, for $n\geq0$ we have, using \eqref{kernel5} and \eqref{decayqn1},
\begin{equation*}
\begin{split}
|r^{(2)}_n(y,A,t)|
&\lesssim 2^{(\beta-1)n}\bigl( |q^N_{n-1}(t)| + |q^N_n(t)| \bigr) + 2^{(\beta-1)n}\bar{\mu}_n(A,p^N(t)) \bigl( |q^N_n(t)| + |q^N_{n+1}(t)|\bigr) \\
&\lesssim \delta_0^{3/2} 2^{(\beta-1-\bar{\theta}_2)n}t^{-\bar{\theta}_2/\beta}e^{-\nu t} .
\end{split}
\end{equation*}
Then the estimate \eqref{pfp30} for $r^{(2)}$ follows.
For $n=N$, we first observe that by interpolating between the two estimates in \eqref{decayqn1} we have for all $n>0$
\begin{equation*}
2^{(\beta-1)n}|q_n^N(t)| = \Bigl(2^{\bar{\theta}_2n}|q_n^N(t)|\Bigr)^{\frac{\beta-1}{\bar{\theta}_2}} |q_n^N(t)|^{1-\frac{\beta-1}{\bar{\theta}_2}} \lesssim \delta_0^{3/2}t^{-\frac{\beta-1}{\beta}}e^{-\nu t},
\end{equation*}
which yields, using also $\|y\|_1\leq1$,
\begin{equation*}
\begin{split}
|r^{(2)}_N(y,A,t)|
& = \frac{\gamma(2^{N+p^N_N(t)})}{2^{N+2}}\Big| (1+2^{N-1}y_{N-1})^2O(q^N_{N-1}(t)) - (1+2^Ny_N)O(q^N_N(t)) \Big| \\
& \lesssim 2^{(\beta-1)N}\Bigl( |q^N_{N-1}(t)| + |q^N_N(t)| \Bigr)
\lesssim \delta_0^{3/2}t^{-\frac{\beta-1}{\beta}}e^{-\nu t},
\end{split}
\end{equation*}
which is the second estimate in \eqref{pfp30b}. To prove the Lipschitz continuity of $r^{(2)}$ (estimate \eqref{pfp31}), we first observe that in view of the explicit expression of $\bar{\mu}_n$ in \eqref{nearlystat5} and of the assumption $A^1,A^2\geq \frac12 A_M$ one can show that
\begin{equation} \label{estmun}
|\bar{\mu}_n(A^1,p^N)-\bar{\mu}_n(A^2,p^N)| \lesssim 2^n e^{-\frac12 A_M 2^n}|A^1-A^2|.
\end{equation}
Then the bound in \eqref{pfp31} can be obtained straightforwardly, using this estimate and \eqref{decayqn1}. We obtain the estimate in \eqref{pfp31b} for $r^{(2)}$ by using the trivial bound $|q^N_n(t)|\lesssim\delta_0^2$:
\begin{equation*}
\begin{split}
|r^{(2)}_N(y^1,A^1,t)-r^{(2)}_N(y^2,A^2,t)|
& \lesssim 2^{(\beta-1)N} \big|(1+2^{N-1}y_{N-1}^1)^2-(1+2^{N-1}y_{N-1}^2)^2\big| |q^N_{N-1}(t)| \\
& \qquad + 2^{\beta N} \big|y_N^1-y_N^2\big| |q^N_{N}(t)| \\
& \lesssim \delta_0^2\|y^1-y^2\|_\beta\,.
\end{split}
\end{equation*}

We eventually consider the remainder $r^{(3)}$: by \eqref{nearlystat6}, \eqref{decaypn2}, and the assumption $\|y\|_1\leq1$,
\begin{align*}
|r^{(3)}_n(y,A,t)|
& \lesssim (1+\|y\|_1)\delta_0t^{-\bar{\theta}_1/\beta}e^{-\frac{\nu}{2}t} \biggl( \sum_{k=n\wedge1}^0 2^{-k} + \sum_{k=n\vee 1}^N 2^{-k}2^{(\beta-\bar{\theta}_1)k} \biggr) \\
& \lesssim
\begin{cases}
\delta_0 2^{-n} t^{-\bar{\theta}_1/\beta} e^{-\frac{\nu}{2}t} &\text{for $n\leq0$,}\\
\delta_0 2^{(\beta-1-\bar{\theta}_1)n} t^{-\bar{\theta}_1/\beta}e^{-\frac{\nu}{2}t} &\text{for $n>0$,}
\end{cases} 
\end{align*}
from which the last estimate in \eqref{pfp30} follows.
We next observe that by interpolating between the two estimates in \eqref{decaypn2} we have for $n>0$
\begin{equation*}
2^{-n}\bigg| \frac{\de p^N_n}{\de t}(t)\bigg|
\leq \biggl( 2^{(\bar{\theta}_1-\beta)n}\bigg| \frac{\de p^N_n}{\de t}(t)\bigg| \biggr)^{\frac{\beta-1}{\bar{\theta}_1}}
\biggl( 2^{-\beta n}\bigg|\frac{\de p^N_n}{\de t}(t)\bigg|\biggr)^{1-\frac{\beta-1}{\bar{\theta}_1}}
\lesssim \delta_0 t^{-\frac{\beta-1}{\beta}} e^{-\frac{\nu}{2}t},
\end{equation*}
hence for $n=N$ we obtain the third estimate in \eqref{pfp30b} (using also \eqref{nearlystat6}):
\begin{equation*}
|r^{(3)}_N(y,A,t)| \lesssim (1+\|y\|_1)2^{-N}\bigg| \frac{\de p^N_N}{\de t}(t)\bigg|
\lesssim \delta_0t^{-\frac{\beta-1}{\beta}}e^{-\frac{\nu}{2}t}.
\end{equation*}
Observe that, in view of \eqref{nearlystat7}, the quantity $\frac{1}{\bar{m}_n(A,p^N(t))}\frac{\partial\bar{m}_n}{\partial p_k}(A,p^N(t))$ is actually independent of $A$: then using \eqref{nearlystat6} and \eqref{decaypn2} we find
\begin{align*}
|r^{(3)}_n(y^1,A^1,t)-r^{(3)}_n(y^2,A^2,t)|
& = \bigg| (y_n^1-y_n^2)\sum_{k=n}^\infty \frac{1}{\bar{m}_n}\frac{\partial\bar{m}_n}{\partial p_k}\frac{\de p^N_k}{\de t}\bigg| \\
& \lesssim
\begin{cases}
\delta_0 2^{-n}2^n|y_n^1-y_n^2| t^{-\bar{\theta}_1/\beta}e^{-\frac{\nu}{2}t} &\text{for $n\leq0$,}\\
\delta_0 2^{(\beta-1-\bar{\theta}_1)n}2^n|y_n^1-y_n^2| t^{-\bar{\theta}_1/\beta}e^{-\frac{\nu}{2} t} &\text{for $n>0$,}
\end{cases} 
\end{align*}
so that also the third estimate in \eqref{pfp31} holds.
For $n=N$, arguing similarly we obtain the last estimate in \eqref{pfp31b}.
\end{proof}

%%%%%%%%%%%%%%%%%%%%%%%%%%%%%%%%%%%%%%%%%%%%%%%%%%%%%%%%%%%%%%%%%%
%%%%%%%%%%%%%%%%%%%%%%%%%%%%%%%%%%%%%%%%%%%%%%%%%%%%%%%%%%%%%%%%%%

\subsection{Continuation argument} \label{subsect:continuation}
The next goal is to extend the representation \eqref{fp} of the functions $m_n^N(t)$, obtained in Proposition~\ref{prop:fp}, for all positive times. This will be achieved by a continuation argument. Indeed, recall that the fundamental assumption in Proposition~\ref{prop:fp} is that the maps $p^N(t)$, $q^N(t)$ satisfy the estimates \eqref{decaypn1}--\eqref{decayqn1} in the time interval $[0,t^N]$. The idea is now to show that \emph{at the time $t^N$} the same estimates \eqref{decaypn1}--\eqref{decayqn1} hold with strict inequality, and therefore they can be extended for larger times $t\in[t^N,t^N+\e]$; in turn, this condition allows to repeat the proof of Proposition~\ref{prop:fp} and to extend also the representation \eqref{fp} in $[t^N,t^N+\e]$.

In order to prove the claim, we take advantage of the representation \eqref{fp} in order to write the evolution equations for $p^N$ and $q^N$ in a handier form. In Lemma~\ref{lem:approx} we computed the equations \eqref{pneq2}--\eqref{qneq2} for the (not truncated) functions $p_n(t)$, $q_n(t)$; then, at the end of Section~\ref{sect:strategy} we have seen that those equations can be written in the form \eqref{pneq3}--\eqref{qneq3} under the assumption that $m_n(t)$ can be represented as in \eqref{yn}. Now, since the truncated functions $m^N_n(t)$ satisfy \eqref{fp} for $t\in[0,t^N]$, the very same equations hold for $p^N_n(t)$, $q^N_n(t)$ for all $t\in[0,t^N]$ and for all $n<N$:
\begin{multline} \label{pneqN}
\frac{\de p_n^N}{\de t}
 = \frac{\gamma(2^{n+p^N_n})}{4} \biggl[ \frac{(1+2^{n-1}y^N_{n-1})^2}{(1+2^ny^N_n)} \Bigl( (p^N_{n-1}-p^N_n) + O(q^N_{n-1}) \Bigr) + O(q_n^N) \\
 \qquad -4\bar{\mu}_{n}\frac{\gamma(2^{n+1+p^N_{n+1}})}{\gamma(2^{n+p^N_n})} \biggl( \frac{(1+2^{n+1}y^N_{n+1})}{(1+2^ny^N_n)}\Bigl( (p^N_{n}-p^N_{n+1}) + O(q^N_{n+1}) \Bigr) + (1+2^ny_n^N)O(q_n^N) \biggr) \biggr] ,
\end{multline}
\begin{align} \label{qneqN}
\frac{\de q_n^N}{\de t}
&= \frac{\gamma(2^{n+p^N_n})}{4} \biggl[ \frac{(1+2^{n-1}y^N_{n-1})^2}{(1+2^ny^N_n)} \Bigl( \frac{q^N_{n-1}}{2}-q^N_n +\delta_0O(q^N_{n-1}) + (p^N_{n-1}-p^N_n)^2\Bigr) +\delta_0O(q^N_n) \nonumber \\
& \qquad -4\bar{\mu}_n\frac{\gamma(2^{n+1+p^N_{n+1}})}{\gamma(2^{n+p^N_n})} \frac{(1+2^{n+1}y^N_{n+1})}{(1+2^ny^N_n)}
\Bigl( (q^N_{n}-q^N_{n+1}) + \delta_0O(q^N_{n+1}) - (p^N_{n+1}-p^N_n)^2 \Bigr) \nonumber \\
& \qquad -4\bar{\mu}_n\frac{\gamma(2^{n+1+p^N_{n+1}})}{\gamma(2^{n+p^N_n})} (1+2^ny^N_n)\delta_0O(q^N_n) \biggr],
\end{align}
where $\bar{\mu}_n=\bar{\mu}_n(A^N(t),p^N(t))$. For $n=N$, recalling also that $y^N_N=0$,
\begin{equation} \label{pneqNb}
\frac{\de p_N^N}{\de t} = \frac{\gamma(2^{N+p_N^N})}{4} \biggl[ (1+2^{N-1}y^N_{N-1})^2 \Bigl( (p^N_{N-1}-p^N_N) + O(q^N_{N-1}) \Bigr) + O(q^N_N) \biggr],
\end{equation}
\begin{multline} \label{qneqNb}
\frac{\de q^N_N}{\de t}
= \frac{\gamma(2^{N+p^N_N})}{4} \biggl[ (1+2^{N-1}y^N_{N-1})^2 \Bigl( \frac{q^N_{N-1}}{2}-q^N_N + \delta_0O(q^N_{N-1}) + (p^N_{N-1}-p^N_N)^2 \Bigr) + \delta_0O(q^N_N) \biggr],
\end{multline}
and $p^N_n(t)=q^N_n(t)=0$ for $n>N$.

\begin{proposition} \label{prop:continuation}
There exist positive constants $L_1$, $L_2$, $L_3$, depending only on $M$, and $\delta_0>0$ sufficiently small, such that the estimates \eqref{decaypn1}, \eqref{decaypn2}, \eqref{decayqn1} and the conclusion of Proposition~\ref{prop:fp} hold for all $t>0$.
\end{proposition}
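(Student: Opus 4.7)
The argument is a bootstrap/continuation. Define
\begin{equation*}
T^N := \sup\bigl\{\,t>0 \,:\, \text{all four bounds \eqref{decaypn1}--\eqref{decayqn1} hold on } [0,t]\,\bigr\}.
\end{equation*}
Since $|p_n^N(0)|\leq\delta_0$ and $0\leq q_n^N(0)\leq 4\delta_0^2\leq 8\delta_0^{3/2}$, and the time derivatives at $t=0$ are controlled by the initial data through \eqref{pneqNb}--\eqref{qneqNb}, all bounds hold with strict inequality at $t=0$ once $L_1,L_2,L_3\geq 1$. Hence $T^N>0$. Assume for contradiction that $T^N<+\infty$. On $[0,T^N]$ all hypotheses of Proposition~\ref{prop:fp} are satisfied, so the representation \eqref{fp} and the estimates \eqref{fp3} are available, and the equations \eqref{pneqN}--\eqref{qneqNb} are valid. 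The aim is to prove that at $t=T^N$ each bound holds with $L_i$ in place of $2L_i$; by continuity they then extend strictly beyond $T^N$, contradicting maximality.

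\emph{Estimates for $p^N$.} Rewrite \eqref{pneqN}--\eqref{pneqNb} as $\partial_t p^N=\mathscr{L}^N(p^N;t)+R^{(p)}(t)$, where $\mathscr{L}^N$ is the operator \eqref{tlinear3} (the coefficients $\sigma_n^N$ from \eqref{sigman} are precisely the ones appearing at leading order when $y^N$ is small and $q^N$ is neglected), and $R^{(p)}$ collects corrections of order $(|y^N|+|q^N|)\cdot|D^+p^N|+|q^N|$. Duhamel gives
\begin{equation*}
p^N(t)=T^N(t;0)(p^N(0))+\int_0^t T^N(t;s)(R^{(p)}(s))\de s.
\end{equation*}
Applying Theorem~\ref{thm:lineartrunc} to the first term with $(\theta,\tilde\theta)=(0,0)$ and $(0,\bar\theta_1)$ yields $\|D^+ T^N(t;0)(p^N(0))\|_{\tilde\theta}\leq C\delta_0\,t^{-\tilde\theta/\beta}e^{-\nu t}$, since $\|p^N(0)\|_0\leq 2\delta_0$. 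For the convolution with $R^{(p)}$, the bounds \eqref{fp3} and the standing continuation hypothesis give $\|R^{(p)}(s)\|_{\bar\theta_1-\beta}\lesssim\delta_0^2\,(1+s^{-\bar\theta_1/\beta})e^{-\nu s/2}$, and integration in $s$ of the type \eqref{pfp15} produces a contribution $\lesssim\delta_0^2\,t^{-\bar\theta_1/\beta}e^{-\nu t/2}$, strictly smaller than $L_1\delta_0\,t^{-\bar\theta_1/\beta}e^{-\nu t/2}$ for $\delta_0$ small. This recovers \eqref{decaypn1} with $L_1$ in place of $2L_1$, and \eqref{decaypn2} follows by feeding the improved $D^+ p^N$ bound back into the equation and using $\gamma(2^{n})\lesssim 1+2^{\beta n}$.

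\emph{Estimates for $q^N$.} The decisive feature of \eqref{qneqN} is that the coefficient of $q_{n-1}^N$ is $\tfrac12$ rather than $1$: writing
\begin{equation*}
\tfrac12 q_{n-1}^N-q_n^N=\tfrac12(q_{n-1}^N-q_n^N)-\tfrac12 q_n^N
\end{equation*}
isolates an additional dissipative term $-\tfrac{\gamma(2^{n+p_n^N})}{8}q_n^N$. By \eqref{kernel5bis}--\eqref{kernel5} one has $\inf_{n\in\Z}\gamma(2^{n+p})\geq c_0>0$ uniformly in $p\in[-\delta_0,\delta_0]$, which provides \emph{uniform} exponential decay of $\|q^N(t)\|_\infty$ at a rate $\geq c_0/8$ (shrinking $\nu$ if necessary to match the one from Theorem~\ref{thm:lineartrunc}). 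The forcing in \eqref{qneqN} is controlled by $(p_{n-1}^N-p_n^N)^2+\delta_0|q^N|$; the first piece is $\leq (2L_1\delta_0)^2 e^{-\nu t}$ by \eqref{decaypn1}. A Gr\"onwall/Duhamel argument then yields $\sup_n|q_n^N(t)|\leq C\delta_0^2 e^{-\nu t}\ll 8\delta_0^{3/2}e^{-\nu t}$ for $\delta_0$ small, recovering the first bound in \eqref{decayqn1} with strict improvement. The weighted bound for $n>0$ is proved analogously, using $\|D^+p^N(t)\|_{\bar\theta_1}$ in the forcing and the $t^{-\bar\theta_2/\beta}$-smoothing of the linear propagator.

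\emph{Main obstacle and closure.} The principal difficulty is the triangular coupling between the three estimates: $R^{(p)}$ involves $q^N$, the $q^N$-forcing involves $(D^+p^N)^2$, and $\partial_t p^N$ involves a $\gamma$-weighted $D^+p^N$. Closing the bootstrap requires arranging that every remainder is of strictly higher order in $\delta_0$ than the target bound. The constants $L_1,L_2,L_3$ are therefore fixed first, depending only on $M$ through $C_1,C_2$ of Theorem~\ref{thm:lineartrunc} and the absolute constant controlling the free evolution from $p^N(0)$, so that the free evolution contributes strictly less than $L_i\delta_0\cdot(\text{weight})$; then $\delta_0$ is chosen small enough that the quadratic (and $O(\delta_0^{3/2})$) remainder contributions are subdominant. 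This contradicts $T^N<+\infty$, so the estimates \eqref{decaypn1}--\eqref{decayqn1} and the representation \eqref{fp} persist for all $t>0$.
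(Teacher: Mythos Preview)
Your bootstrap architecture is exactly that of the paper, and the treatment of $p^N$ and $\tfrac{\de p^N}{\de t}$ via Duhamel plus Theorem~\ref{thm:lineartrunc} is correct in spirit. Two points deserve correction. First, a minor one: the leading linear operator in the $p^N$ equation does \emph{not} carry the coefficients $\sigma_n^N$ from \eqref{sigman}; expanding \eqref{pneqN} gives $\sigma_n^N=4\bar\mu_n\gamma(2^{n+1+p_{n+1}^N})/\gamma(2^{n+p_n^N})$, half of \eqref{sigman} (cf.\ \eqref{pcont3} in the paper). This is harmless because the linear theory of Section~\ref{sect:linear} depends only on the asymptotics of $\sigma_n$, not on the precise constant, but your parenthetical identification is incorrect.

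The more substantive gap is in the weighted $q^N$ estimate. Your observation that the factor $\tfrac12$ in front of $q_{n-1}^N$ yields an extra damping term $-\tfrac{\gamma(2^{n+p_n^N})}{8}q_n^N$ is exactly right, and it does give the unweighted bound $\sup_n|q_n^N(t)|\lesssim\delta_0^{3/2}e^{-\nu t}$ by a maximum-principle/Gr\"onwall argument (this is how the paper handles it too, comparing with the constant supersolution $4\delta_0^{3/2}e^{-\nu t}$). But for the weighted bound $\sup_{n>0}2^{\bar\theta_2 n}|q_n^N(t)|\lesssim\delta_0^{3/2}t^{-\bar\theta_2/\beta}e^{-\nu t}$ you invoke ``the $t^{-\bar\theta_2/\beta}$-smoothing of the linear propagator'' without saying which one. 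The linear part of the $q^N$ equation is $\tfrac{\gamma}{4}[\tfrac12 q_{n-1}-q_n-\sigma_n'(q_n-q_{n+1})]$, which is \emph{not} the operator $\mathscr{L}^N$ of Theorem~\ref{thm:lineartrunc}; the extra damping $-\tfrac{\gamma}{8}q_n$ is of the same order $2^{\beta n}$ as the principal part for large $n$, so it cannot be pushed into the remainder and absorbed by Duhamel with $T^N$. You therefore need a separate smoothing estimate for this modified propagator. The paper supplies this via Lemma~\ref{lem:continuation2}: it constructs an explicit supersolution $\hat q_n$ solving the one-sided problem $\partial_t\hat q_n=\tfrac{\gamma(2^n)}{4}(\tfrac12(1+\delta_1)\hat q_{n-1}-(1-\delta_1)\hat q_n)$ for $n>n_0$, computes it in terms of the fundamental solution $\Psi_n^{(\ell)}$ of \cite[Lemma~A.3]{BNVd}, and reads off the bound $2^{\bar\theta_2 n}\hat q_n(t)\leq c_2\delta_0^{3/2}t^{-\bar\theta_2/\beta}e^{-\nu t}$ directly; a maximum-principle comparison with $q_n^N$ then closes the weighted estimate. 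Your proposal would be complete once you either carry out this construction or prove the analogue of Theorem~\ref{thm:lineartrunc} for the operator with the $\tfrac12$ coefficient.
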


\begin{proof}
We let $\overline{T}$ be the supremum of the times $T>0$ such that the estimates \eqref{decaypn1}, \eqref{decaypn2}, \eqref{decayqn1} hold for every $t\in(0,T)$.
Notice that $\overline{T}>0$, as the estimates are satisfied in $(0,t^N)$. The proof amounts to show that $\overline{T}=\infty$: indeed, this allows to repeat the proof of Proposition~\ref{prop:fp} in the time interval $(0,\infty)$. We assume by contradiction that $\overline{T}<\infty$, and we will show that \eqref{decaypn1}--\eqref{decayqn1} hold at $t=\overline{T}$ with strict inequality: this would allow to extend the estimates for larger times, leading to a contradiction.
	
In view of the fact, already observed, that the proof of Proposition~\ref{prop:fp} can be repeated in the time interval $(0,\overline{T})$, the sequences $p^N(t)$, $q^N(t)$ obey the evolution equations \eqref{pneqN}--\eqref{qneqNb} for $t\in(0,\overline{T})$.
As usual, we denote by $C$ a constant which can depend only on the properties of the kernels and on $M$, and might change from line to line.

\smallskip\noindent\textit{Step 1: decay of $p^N$.}
We first show that \eqref{decaypn1} holds at $t=\overline{T}$ with strict inequality, with the choice
\begin{equation} \label{L1}
L_1\geq\max\Bigl\{C_1(M,0,0), C_1(M,0,\bar{\theta}_1)\Bigr\}
\end{equation}
(where $C_1$ is the constant given by Theorem~\ref{thm:lineartrunc}).
We highlight the leading order linear operator in the equations \eqref{pneqN}, \eqref{pneqNb} for $p^N(t)$: for $n\leq N$
\begin{equation} \label{pcont1}
\frac{\de p_n^N}{\de t} = \frac{\gamma(2^{n+p^N_n})}{4}\Bigl( p^N_{n-1}-p^N_n - \sigma^N_n(t)(p^N_n-p^N_{n+1}) \Bigr) + R^{1}_n(t) + R^{2}_n(t) + R^{3}_n(t)
\end{equation}
where we introduced the following quantities:
\begin{equation} \label{pcont3}
\sigma_n^N(t) :=
\begin{cases}
4\bar{\mu}_n(A_M,p^N(t))\frac{\gamma(2^{n+1+p^N_{n+1}(t)})}{\gamma(2^{n+p^N_n(t)})}  & \text{if } n<N,\\
0 & \text{if }n=N,
\end{cases}
\end{equation}
\begin{multline} \label{pcont4}
R^{1}_n(t) := \frac{\gamma(2^{n+p^N_n})}{4} \biggl(\frac{(1+2^{n-1}y^N_{n-1})^2}{(1+2^ny^N_n)}-1\biggr) (p^N_{n-1}-p^N_n) \\
-\bar{\mu}_n(A^N(t),p^N(t))\gamma(2^{n+1+p^N_{n+1}}) \biggl( \frac{(1+2^{n+1}y^N_{n+1})}{(1+2^ny^N_n)}-1\biggr) (p^N_{n}-p^N_{n+1}) ,
\end{multline}
\begin{multline} \label{pcont4b}
R^{2}_n(t)
:= \frac{\gamma(2^{n+p^N_n})}{4} \biggl[ \frac{(1+2^{n-1}y^N_{n-1})^2}{(1+2^ny^N_n)} O(q^N_{n-1}) + O(q_n^N) \biggr] \\
-\bar{\mu}_n(A^N(t),p^N(t))\gamma(2^{n+1+p^N_{n+1}}) \biggl[ \frac{(1+2^{n+1}y^N_{n+1})}{(1+2^ny^N_n)} O(q^N_{n+1}) + (1+2^ny_n^N)O(q_n^N)\biggr],
\end{multline}
\begin{equation} \label{pcont4c}
R^{3}_n(t)
:= \gamma(2^{n+1+p^N_{n+1}}) \Bigl[ \bar{\mu}_n(A_M,p^N(t)) - \bar{\mu}_n(A^N(t),p^N(t)) \Bigr] \bigl(p^N_n-p^N_{n+1})
\end{equation}
(for $n=N$ the terms containing $\bar{\mu}_n$ are not present in $R^1$, $R^2$, $R^3$).
The linearized operator in \eqref{pcont1} is of the form considered in Section~\ref{subsect:lineartrunc}, see \eqref{tlinear1}; since also the assumption \eqref{asspn} is satisfied, we are
in the position to apply Theorem~\ref{thm:lineartrunc}. We can write the solution to \eqref{pcont1} in terms of the solution to the linearized problem as
\begin{equation}
p^N_n(t) = T^N_n(t;0)(p^N(0)) + \sum_{i=1}^3\int_0^t T^N_n(t;s)(R^{i}(s))\de s
\end{equation}
(see \eqref{tlinear4} for the definition of the operator $T^N$). By applying Theorem~\ref{thm:lineartrunc} and using the estimates \eqref{contrp1}--\eqref{contrp3} proved in Lemma~\ref{lem:continuation} below we then find
\begin{align*}
\big\| D^+\bigl(p^N(t)\bigr) \big\|_{0}
& \leq \big\| D^+\bigl(T^N(t;0)(p^N(0))\bigr) \big\|_{0} + \sum_{i=1}^3\int_0^t \big\| D^+\bigl(T^N(t;s)(R^{i}(s)) \bigr) \big\|_{0} \de s \\
& \leq C_1(M,0,0) \|p^N(0)\|_0 e^{-\nu t} \\
& \qquad + \sum_{i=1}^3 C_1(M,\bar{\theta}_1-\beta,0)\int_0^t \big\| R^{i}(s) \big\|_{\bar{\theta}_1-\beta}(t-s)^{-\frac{\beta-\bar{\theta}_1}{\beta}}e^{-\nu(t-s)} \de s \\
& \leq L_1\delta_0 e^{-\nu t} + C\delta_0^{3/2} \int_0^t \bigl(1+s^{-\bar{\theta}_1/\beta}\bigr)e^{-\frac{\nu}{2} s}(t-s)^{-\frac{\beta-\bar{\theta}_1}{\beta}}e^{-\nu(t-s)} \de s \,.
\end{align*}
It can be checked by elementary arguments that the integral on the right-hand side of the previous inequality is bounded by $Ce^{-\frac{\nu}{2}t}$. Then
\begin{equation*}
\big\| D^+\bigl(p^N(t)\bigr) \big\|_{0}
\leq L_1\delta_0 e^{-\nu t} + C\delta_0^{3/2} e^{-\frac{\nu}{2}t}
\leq \frac32 L_1\delta_0e^{-\frac{\nu}{2}t} 
\end{equation*}
for all $t\in(0,\overline{T})$, by choosing $\delta_0$ small enough. Therefore the first estimate in \eqref{decaypn1} holds with strict inequality at $t=\overline{T}$.
By the same argument, using also \eqref{contrp1bis}, we have
\begin{align*}
\big\| D^+\bigl(p^N(t)\bigr) \big\|_{\bar{\theta}_1}
& \leq \big\| D^+\bigl(T^N(t;0)(p^N(0))\bigr) \big\|_{\bar{\theta}_1} + \sum_{i=1}^3\int_0^t \big\| D^+\bigl(T^N(t;s)(R^{i}(s)) \bigr) \big\|_{\bar{\theta}_1} \de s \\
& \leq C_1(M,0,\bar{\theta}_1) \|p^N(0)\|_0 t^{-\bar{\theta}_1/\beta} e^{-\nu t} \\
& \qquad + C_1(M,\bar{\theta}_1-1,\bar{\theta}_1)\int_0^t \big\| R^{1}(s) \big\|_{\bar{\theta}_1-1} (t-s)^{-\frac{1}{\beta}}e^{-\nu(t-s)} \de s \\
& \qquad + C_1(M,\bar{\theta}_2-\beta,\bar{\theta}_1) \int_0^t \big\| R^{2}(s) \big\|_{\bar{\theta}_2-\beta}(t-s)^{-\frac{\beta-(\bar{\theta}_2-\bar{\theta}_1)}{\beta}}e^{-\nu(t-s)} \de s \\
& \qquad + C_1(M,0,\bar{\theta}_1)\int_0^t \|R^3(s)\|_{0}(t-s)^{-\bar{\theta}_1/\beta}e^{-\nu(t-s)}\de s \\
& \leq L_1\delta_0 t^{-\bar{\theta}_1/\beta}e^{-\nu t} \\
& \qquad + C\delta_0^2 \int_0^t \bigl(1+s^{-\frac{\beta-1}{\beta}}\bigr)s^{-\bar{\theta}_1/\beta}e^{-\frac{\nu}{2}s}(t-s)^{-\frac{1}{\beta}}e^{-\nu(t-s)} \de s \\
& \qquad + C\delta_0^{3/2} \int_0^t  \bigl(1+s^{-\bar{\theta}_2/\beta}\bigr) e^{-\nu s} (t-s)^{-\frac{\beta-(\bar{\theta}_2-\bar{\theta}_1)}{\beta}}e^{-\nu(t-s)} \de s \\
& \leq L_1\delta_0 t^{-\bar{\theta}_1/\beta}e^{-\nu t} + C\delta_0^{3/2}t^{-\bar{\theta}_1/\beta} e^{-\frac{\nu}{2}t}
\leq \frac32 L_1\delta_0 t^{-\bar{\theta}_1/\beta}e^{-\nu t}
\end{align*}
for all $t\in(0,\overline{T})$, by choosing $\delta_0$ small enough. Therefore also the second estimate in \eqref{decaypn1} holds with strict inequality at $t=\overline{T}$.

\smallskip\noindent\textit{Step 2: decay of $\frac{\de p^N}{\de t}$.}
We next choose $L_2$ such that
\begin{equation} \label{L2}
L_2\geq 2L_1 \biggl( \sup_{\xi\leq2}\gamma(\xi)+ \sup_{\xi\geq1}\xi^{-\beta}\gamma(\xi) \biggr) \Bigl(1+\sup_{n,t}\sigma_n^N(t)\Bigr).
\end{equation}
Going back to the equation \eqref{pcont1}, we see that we can bound, using the estimates \eqref{contrp1}--\eqref{contrp3} in Lemma~\ref{lem:continuation} below and the definition of $L_2$,
\begin{align*}
\Big\|\frac{\de p^N}{\de t}(t)\Big\|_{-\beta}
& \leq \frac{L_2}{2L_1}\|D^+(p^N(t))\|_{0} + \| R^{1}(t) \|_{-\beta} + \|R^{2}(t) \|_{-\beta} + \|R^{3}(t) \|_{-\beta}\\
& \leq L_2\delta_0 e^{-\frac{\nu}{2}t} + 5C_4\delta_0^{3/2}e^{-\frac{\nu}{2}t}
\leq \frac32L_2\delta_0 e^{-\frac{\nu}{2}t} 
\end{align*}
by choosing $\delta_0$ small enough, and similarly
\begin{align*}
\Big\|\frac{\de p^N}{\de t}(t)\Big\|_{\bar{\theta}_1-\beta}
& \leq \frac{L_2}{2L_1}\|D^+p^N(t)\|_{\bar\theta_1} + \| R^{1}(t) \|_{\bar{\theta}_1-\beta} + \|R^{2}(t) \|_{\bar{\theta}_1-\beta} + \|R^{3}(t) \|_{\bar{\theta}_1-\beta} \\
& \leq L_2\delta_0t^{-\bar{\theta}_1/\beta}e^{-\frac{\nu}{2}t} + 3C_4\delta_0^{3/2}(1+t^{-\bar{\theta}_1/\beta})e^{-\frac{\nu}{2}t} \\
& \leq \frac32L_2\delta_0 \bigl(1+t^{-\bar{\theta}_1/\beta}\bigr)e^{-\frac{\nu}{2}t}.
\end{align*}
These two estimates imply that \eqref{decaypn2} holds with strict inequality at $t=\overline{T}$, as claimed.

\smallskip\noindent\textit{Step 3: decay of $q^N$.}
It remains to prove the decay \eqref{decayqn1} of $q^N$. This will be obtained by comparison with an explicit supersolution for the equation \eqref{qneqN}, \eqref{qneqNb} satisfied by $q^N$.

We first consider the sequence $\bar{q}_n(t)=4\delta_0^{3/2}e^{-\nu t}$: thanks to the bounds \eqref{decaypn1}, \eqref{decayqn1} and \eqref{fp3}, one can see that, possibly taking a smaller $\nu>0$ (depending only on the kernels), the sequence $\bar{q}_n$ is a supersolution for the equation \eqref{qneqN}, namely for $t\in[0,\overline{T}]$ and $n\leq N$
\begin{align*}
\frac{\de \bar{q}_n}{\de t}
&\geq \frac{\gamma(2^{n+p^N_n})}{4} \biggl[ \frac{(1+2^{n-1}y^N_{n-1})^2}{(1+2^ny^N_n)} \Bigl( \frac{\bar{q}_{n-1}}{2}-\bar{q}_n +\delta_0O(q^N_{n-1}) + (p^N_{n-1}-p^N_n)^2\Bigr) +\delta_0O(q^N_n) \nonumber \\
& \qquad -\sigma_n^N(t)\frac{(1+2^{n+1}y^N_{n+1})}{(1+2^ny^N_n)}
\Bigl( (\bar{q}_{n}-\bar{q}_{n+1}) + \delta_0O(q^N_{n+1}) - (p^N_{n+1}-p^N_n)^2 \Bigr) \nonumber \\
& \qquad - \sigma_n^N(t) (1+2^ny^N_n)\delta_0O(q^N_n) \biggr] .
\end{align*}
It follows that $w_n(t):=\bar{q}_n(t)+\e2^{-n}-q^N_n(t)$ satisfies, for $t\in[0,\overline{T}]$ and $n\leq N$,
\begin{equation*}
\frac{\de w_n}{\de t} \geq
\frac{\gamma(2^{n+p^N_n})}{4} \biggl[ \frac{(1+2^{n-1}y^N_{n-1})^2}{(1+2^ny^N_n)} \Bigl( \frac{w_{n-1}}{2}-w_n \Bigr)
-\sigma_n^N(t)\frac{(1+2^{n+1}y^N_{n+1})}{(1+2^ny^N_n)}(w_n-w_{n+1}) \biggr]
\end{equation*}
with $w_n(0)\geq0$ by \eqref{decaypn0}, $w_{N+1}(t)\geq0$, $w_{-N_1}(t)\geq0$ for $N_1$ large enough, depending on $\e$; hence by applying the maximum principle in the region $n\in[-N_1,N]$, $t\in[0,\overline{T}]$ we obtain $w_n(t)\geq0$, which yields (by passing to the limit first as $N_1\to\infty$, then as $\e\to0$)
\begin{equation*}
\bar{q}_n(t)\geq q_n^N(t) \qquad\text{for all $n\leq N$ and $t\in[0,\overline{T}]$.}
\end{equation*}
Hence the first estimate in \eqref{decayqn1} holds with strict inequality at $t=\overline{T}$.

Next, we let $n_0$ be given by Lemma~\ref{lem:continuation2} below. Notice that the first estimate in \eqref{decayqn1} yields
\begin{equation} \label{qcont2}
\sup_{0<n\leq n_0} 2^{\bar{\theta}_2n}|q_n^N(t)| \leq 2^{\bar{\theta}_2n_0+2}\delta_0^{3/2}e^{-\nu t}.
\end{equation}
We then let $\hat{q}_n$ be the solution to the initial/boundary value problem \eqref{hatqn1}. If $\delta_0$ is small enough, one can show that $\hat{q}_n$ is a supersolution for the equation \eqref{qneqN} solved by $q_n^N$, in the sense that for $n>n_0$ and $t\in[0,\overline{T}]$
\begin{align*}
\frac{\de \hat{q}_n}{\de t}
& = \frac{\gamma(2^n)}{4} \Bigl( \frac12(1+\delta_1)\hat{q}_{n-1} - (1-\delta_1)\hat{q_n} \Bigr) \\
&\geq \frac{\gamma(2^{n+p^N_n})}{4} \biggl[ \frac{(1+2^{n-1}y^N_{n-1})^2}{(1+2^ny^N_n)} \Bigl( \frac{\hat{q}_{n-1}}{2}-\hat{q}_n +\delta_0O(q^N_{n-1}) + (p^N_{n-1}-p^N_n)^2\Bigr) +\delta_0O(q^N_n) \nonumber \\
& \qquad -\sigma_n^N(t)\frac{(1+2^{n+1}y^N_{n+1})}{(1+2^ny^N_n)}
\Bigl( ({q}^N_{n}-{q}^N_{n+1}) + \delta_0O(q^N_{n+1}) - (p^N_{n+1}-p^N_n)^2 \Bigr) \nonumber \\
& \qquad - \sigma_n^N(t) (1+2^ny^N_n)\delta_0O(q^N_n) \biggr] .
\end{align*}
Indeed, by using the decay estimates \eqref{decaypn1} and \eqref{decayqn1} for $D^+(p^N)$ and $q^N$, the estimate \eqref{fp3} on $y^N$, the fast decay \eqref{contsigma} of $\sigma_n^N$, and the estimate from below \eqref{hatqn2} on $\hat{q}_n$, one can show that all the terms on the right-hand side can be bounded in terms of $C(\delta_0)\hat{q}_n$, where $C(\delta_0)$ can be made arbitrarily small by choosing $\delta_0$ small enough.

Hence the function $w_n(t)=\hat{q}_n(t)-q^N_n(t)$ satisfies for $t\in[0,\overline{T}]$ and $n> n_0$
\begin{equation*}
\frac{\de w_n}{\de t} \geq
\frac{\gamma(2^{n+p^N_n})}{4} \frac{(1+2^{n-1}y^N_{n-1})^2}{(1+2^ny^N_n)} \Bigl( \frac{w_{n-1}}{2}-w_n \Bigr),
\end{equation*}
with $w_n(0)\geq0$, $w_{n_0}(t)\geq0$ (by \eqref{qcont2}). The maximum principle gives $w_n(t)\geq0$ for all $t\in[0,\overline{T}]$ and $n>n_0$, that is, in view of \eqref{hatqn2},
\begin{equation*}
\sup_{n>n_0} 2^{\bar\theta_2n}|q_n^N(t)| \leq c_2\delta_0^{3/2}t^{-\bar{\theta}_2/\beta}e^{-\nu t}.
\end{equation*}
By combining this estimate with \eqref{qcont2}, we eventually find that also the second estimate in \eqref{decayqn1} holds at $t=\overline{T}$ with strict inequality, as claimed, choosing $L_3=\max\{c_2,2^{\bar{\theta}_2n_0+2} \}$.
\end{proof}

The following two lemmas are instrumental in the proof of Proposition~\ref{prop:continuation}.

\begin{lemma} \label{lem:continuation}
Let $R^{1}$, $R^{2}$, $R^{3}$ be the sequences defined in \eqref{pcont4}, \eqref{pcont4b}, \eqref{pcont4c} respectively. Then there exists a constant $C_4$, depending on $M$, $L_1$, $L_2$, $L_3$, such that for all  $t\in(0,\overline{T})$
\begin{equation} \label{contrp1}
\|R^{1}(t)\|_{\theta-\beta} \leq C_4\delta_0^2 \bigl(1+t^{-\theta/\beta}\bigr)e^{-\frac{\nu}{2} t},
\qquad\text{for all $\theta\in[0,\bar{\theta}_1]$,}
\end{equation}
\begin{equation} \label{contrp2}
\|R^{2}(t)\|_{\theta-\beta} \leq C_4\delta_0^{3/2} \bigl(1+t^{-\theta/\beta}\bigr)e^{-\nu t},
\qquad\text{for all $\theta\in[0,\bar{\theta}_2]$,}
\end{equation}
\begin{equation} \label{contrp3}
\|R^{3}(t)\|_{\theta-\beta} \leq C_4\delta_0^2 e^{-\frac{\nu}{2} t},
\qquad\text{for all $\theta\in[0,\beta]$,}
\end{equation}
\begin{equation} \label{contrp1bis}
\|R^{1}(t)\|_{\bar{\theta}_1-1} \leq C_4\delta_0^2 \bigl(1+t^{-\frac{\beta-1}{\beta}}\bigr)t^{-\bar{\theta}_1/\beta}e^{-\frac{\nu}{2}t}.
\end{equation}
\end{lemma}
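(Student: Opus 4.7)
The idea is to dissect each remainder into explicit factors, bound each factor by the available decay estimates from \eqref{fp3}, \eqref{decaypn1}--\eqref{decaypn2}, \eqref{decayqn1}, and then balance the resulting powers of $2^n$ against the weight $2^{(\theta-\beta)n}$ (for $n>0$) or $2^n$ (for $n\leq 0$) coming from the norm $\|\cdot\|_{\theta-\beta}$. The three remainders differ in what the leading small factor is: in $R^{1}$ it is the Taylor-type difference $\frac{(1+2^{n-1}y^N_{n-1})^2}{1+2^ny^N_n}-1$, in $R^{2}$ it is the variance $q^N$, and in $R^{3}$ it is $\bar{\mu}_n(A_M,p^N)-\bar{\mu}_n(A^N,p^N)$. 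In each case the remaining factor is of the type $D^\pm_n(p^N)$ multiplied by $\gamma$ and possibly $\bar\mu_n$, which contribute the basic $\gamma$-growth $\sim 2^{\beta n}$ for $n>0$, damped by $e^{-A_M 2^n}$ whenever $\bar\mu_n$ appears.

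For $R^{1}$, I first expand $\frac{(1+2^{n-1}y^N_{n-1})^2}{1+2^ny^N_n}-1 = O(2^n|y^N_n|+2^{n-1}|y^N_{n-1}|)$, using $\|y^N\|_1\leq C_0\delta_0$ to justify the small-denominator linearization. For $n\leq 0$ I bound $2^n|y^N_n|\leq\|y^N\|_1\lesssim\delta_0$, $|D^\pm_n(p^N)|\leq 2^{-n}\|D^+(p^N)\|_0\lesssim 2^{-n}\delta_0 e^{-\nu t/2}$, and $\gamma\sim 1$, giving $2^n|R^{1}_n|\lesssim\delta_0^2 e^{-\nu t/2}$. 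For $n>0$, depending on which norm bound on $D^+p^N$ and $y^N$ I use, I can either extract a $t^{-\theta/\beta}$ factor (using \eqref{decaypn1} with $\bar\theta_1$) or keep the uniform bound; interpolating between $\theta=0$ and $\theta=\bar\theta_1$ yields \eqref{contrp1}. The sharper estimate \eqref{contrp1bis} uses simultaneously $2^n|y^N_n|\leq 2^{(1-\beta)n}\|y^N\|_\beta$ with \eqref{fp3} and the $\bar\theta_1$-decay of $D^+p^N$, producing $e^{-\nu t}$ and both singular factors.

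For $R^{2}$ the same scheme applies with $q^N$ in place of the $y$-difference. The uniform bound $\sup_n |q^N_n|\leq 8\delta_0^{3/2}e^{-\nu t}$ gives the $\theta=0$ case, while the weighted bound $2^{\bar\theta_2 n}|q^N_n|\lesssim L_3\delta_0^{3/2}t^{-\bar\theta_2/\beta}e^{-\nu t}$ yields the $\theta=\bar\theta_2$ case after cancellation against $2^{\beta n}$ from $\gamma$; again interpolation covers intermediate $\theta$. The terms containing $\bar\mu_n$ at large $n$ are trivially controlled by their exponential decay. For $R^{3}$, I invoke \eqref{estmun}: combined with $|A_M-A^N(t)|\leq C_0\delta_0$ from \eqref{fp3} and the bound on $D^+p^N$, one obtains $|R^{3}_n|\lesssim\gamma(2^{n+1})\cdot 2^ne^{-A_M 2^n/2}\delta_0|D^+_n(p^N)|$. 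The factor $e^{-A_M 2^n/2}$ dominates every polynomial in $2^n$ for $n>0$, so $\|R^{3}\|_{\theta-\beta}$ is controlled for all $\theta\in[0,\beta]$ with no time singularity, yielding \eqref{contrp3}.

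The main obstacle is purely bookkeeping: each $R^{i}_n$ is a sum of several terms, and for each one must choose which bound on $y^N$, $D^+p^N$, $q^N$, or $\bar\mu_n$ to invoke so that the algebraic powers of $2^n$ balance the weight of the target norm while the time singularity stays integrable and at most $t^{-\theta/\beta}$. The interpolation between the $n\leq 0$ regime (where everything is polynomial in $2^n$) and the $n>0$ regime (where $\gamma$ grows and $\bar\mu_n$ decays super-exponentially) is the only delicate point, and it is resolved by splitting the sequence at $n=0$ and selecting the appropriate pair of norms in each half.
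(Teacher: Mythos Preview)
Your proposal is correct and follows essentially the same route as the paper: split at $n=0$, expand the fraction in $R^{1}$ as $O(2^n|y^N_n|)$ and pair it with the interpolated bound $\|D^+(p^N)\|_\theta\lesssim\delta_0 t^{-\theta/\beta}e^{-\nu t/2}$ (using $\|y^N\|_1$ for \eqref{contrp1} and $\|y^N\|_\beta$ for \eqref{contrp1bis}); interpolate the two bounds in \eqref{decayqn1} to handle $R^{2}$; and for $R^{3}$ invoke \eqref{estmun} together with $|A^N-A_M|\leq C_0\delta_0$ so that the factor $2^ne^{-A_M2^n/2}$ absorbs all polynomial growth for $n>0$. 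The only cosmetic difference is that the paper interpolates the input norms $\|D^+(p^N)\|_\theta$ and $2^{\theta n}|q^N_n|$ directly rather than the endpoint estimates on $R^{i}$, but this amounts to the same computation.
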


\begin{proof}
Along the proof, the symbol $\lesssim$ will be used for inequalities up to constants which can depend only on $M$, $L_1$, $L_2$, $L_3$. We remark that, in view of the explicit expression \eqref{nearlystat5} of $\bar{\mu}_n$ and of the fact that, by construction, $A^N(t)\geq\frac{A_M}{2}$, we have
\begin{equation} \label{contsigma}
\limsup_{n\to-\infty} | \bar{\mu}_n(A^N(t),p^N(t))-1|\lesssim\delta_0,
\quad
\bar{\mu}_n(A^N(t),p^N(t)) = O(e^{-\frac12 A_M2^n}) \quad\text{as $n\to\infty$.}
\end{equation}
The estimates below follow essentially by using the assumptions \eqref{kernel5}--\eqref{kernel5bis} on $\gamma$, the asymptotics \eqref{contsigma} of $\bar{\mu}_n$, the bounds \eqref{fp3} on $y^N$ and $A^N$, and the estimates \eqref{decaypn1}--\eqref{decayqn1} on $p^N$, $q^N$ (which by assumption hold for $t\in(0,\overline{T})$).

We first consider $R^{1}$. Observe that by interpolating between the two estimates in \eqref{decaypn1} we have for $\theta\in[0,\bar{\theta}_1]$
\begin{equation*}
\|D^+(p^N(t))\|_{\theta} \leq \bigl(\|D^+(p^N(t))\|_{\bar\theta_1}\bigr)^{\frac{\theta}{\bar{\theta}_1}} \bigl(\|D^+(p^N(t))\|_{0}\bigr)^{1-\frac{\theta}{\bar{\theta}_1}} \lesssim \delta_0 t^{-\theta/\beta}e^{-\frac{\nu}{2}t}.
\end{equation*}
For $n\leq0$ we then find
\begin{align*}
|R^{1}_n(t)|
& \lesssim \Bigl(2^n|y^N_{n-1}-y^N_n| + 2^{2n}|y^N_{n-1}|^2 \Bigr)|p^N_{n-1}-p^N_n| + |2^{n+1}y^N_{n+1}-2^ny^N_n||p^N_n-p^N_{n+1}| \\
& \lesssim 2^{-n}\|y^N(t)\|_1 \|D^+(p^N(t))\|_{0} \lesssim 2^{-n}\delta_0^2 e^{-\frac{\nu}{2}t},
\end{align*}
whereas for $n\geq0$
\begin{align*}
|R^{1}_n(t)|
& \lesssim  2^{\beta n}\Bigl(2^n|y^N_{n-1}-y^N_n| + 2^{2n}|y^N_{n-1}|^2 \Bigr)|p^N_{n-1}-p^N_n| \\
& \qquad + 2^{\beta n} e^{-\frac12 A_M2^n} |2^{n+1}y^N_{n+1}-2^ny^N_n||p^N_n-p^N_{n+1}| \\
& \lesssim 2^{(\beta-\theta)n}\|y^N(t)\|_1 \|D^+(p^N(t))\|_{\theta} \lesssim 2^{(\beta-\theta)n}\delta_0^2 t^{-\theta/\beta}e^{-\frac{\nu}{2}t}.
\end{align*}
The previous estimates combined yield \eqref{contrp1}.

For the proof of \eqref{contrp1bis} we estimate as before ($n>0$)
\begin{align*}
|R^{1}_n(t)|
%& \lesssim  2^{\beta n} \Bigl(2^n|y^N_{n-1}-y^N_n| + 2^{2n}|y^N_{n-1}|^2 \Bigr)|p^N_{n-1}-p^N_n| \\
%& \qquad + 2^{\beta n}e^{-\frac12 A_M2^n} |2^{n+1}y^N_{n+1}-2^ny^N_n||p^N_n-p^N_{n+1}| \\
& \lesssim 2^{(1-\bar{\theta}_1)n}\|y^N(t)\|_\beta \|D^+(p^N(t))\|_{\bar{\theta}_1} \\
& \lesssim 2^{(1-\bar{\theta}_1)n} \delta_0^2 \bigl(1+t^{-\frac{\beta-1}{\beta}}\bigr)e^{-\frac{\nu}{2}t}t^{-\bar{\theta}_1/\beta}e^{-\frac{\nu}{2}t}.
\end{align*}

We next consider $R^{2}$: we first observe that by interpolating between the two estimates in \eqref{decayqn1} we have for all $n>0$ and $\theta\in[0,\bar{\theta}_2]$
\begin{equation*}
2^{\theta n}|q_n^N(t)| = \Bigl(2^{\bar{\theta}_2n}|q_n^N(t)|\Bigr)^{\frac{\theta}{\bar{\theta}_2}} |q_n^N(t)|^{1-\frac{\theta}{\bar{\theta}_2}} \lesssim \delta_0^{3/2}t^{-\theta/\beta}e^{-\nu t}.
\end{equation*}
Hence
\begin{align*}
|R^{2}_n(t)| &\lesssim |q^N_{n-1}(t)| + |q^N_{n}(t)| + |q^N_{n+1}(t)| \lesssim \delta_0^{3/2} e^{-\nu t} & & \text{for $n<0$,} \\
|R^{2}_n(t)| &\lesssim 2^{\beta n} \Bigl( |q^N_{n-1}(t)| + |q^N_{n}(t)| + |q^N_{n+1}(t)| \Bigr) \lesssim 2^{(\beta-\theta)n}\delta_0^{3/2}t^{-\theta/\beta}e^{-\nu t} & & \text{for $n\geq0$,}
\end{align*}
and the two estimates combined yield \eqref{contrp2}.

Finally, to prove the estimate \eqref{contrp3} for $R^{3}$, we recall \eqref{estmun} and we obtain
\begin{align*}
2^n|R^{3}_n(t)| &\lesssim \delta_02^n |p^N_n-p^N_{n+1}| \lesssim \delta_0^{2} e^{-\frac{\nu}{2} t} & & \text{for $n<0$,} \\
2^{(\theta-\beta)n}|R^{3}_n(t)| &\lesssim \delta_0 2^{\theta n}2^ne^{-\frac12 A_M2^n} |p^N_n-p^N_{n+1}| \lesssim \delta_0|p^N_n-p^N_{n+1}| \lesssim \delta_0^{2} e^{-\frac{\nu}{2} t} & & \text{for $n\geq0$.}
\end{align*}
The conclusion follows.
\end{proof}

\begin{lemma} \label{lem:continuation2}
There exist $\delta_1>0$, $n_0\in\N$, and $c_2>c_1>0$ such that if $\hat{q}(t)=\{\hat{q}_n(t)\}_{n\in\Z}$ solves
\begin{equation} \label{hatqn1}
\begin{cases}
\frac{\de\hat{q}_n}{\de t} = \frac{\gamma(2^n)}{4} \Bigl( \frac12(1+\delta_1)\hat{q}_{n-1} - (1-\delta_1)\hat{q_n} \Bigr) & \text{for $n>n_0$,}\\
\hat{q}_n(0) = 4\delta_0^{3/2} & \text{for $n>n_0$,}\\
\hat{q}_{n_0}(t) = 4\delta_0^{3/2}e^{-\nu t} & \text{for $t\geq0$,}
\end{cases}
\end{equation}
then
\begin{equation} \label{hatqn2}
c_1\delta_0^{3/2} e^{-\nu t} \leq 2^{\bar{\theta}_2n}\hat{q}_n(t) \leq c_2\delta_0^{3/2} t^{-\bar{\theta}_2/\beta} e^{-\nu t}
\qquad\text{for all $n>n_0$.}
\end{equation}
\end{lemma}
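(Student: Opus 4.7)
The plan is to exploit the scale invariance of the principal part of \eqref{hatqn1}. First I choose $\delta_1>0$ so that $2^{-\bar{\theta}_2}$ is an eigenvalue of the cascading operator; solving $\frac{1+\delta_1}{2(1-\delta_1)}=2^{-\bar{\theta}_2}$ gives $\delta_1=\frac{2^{1-\bar{\theta}_2}-1}{2^{1-\bar{\theta}_2}+1}$, which lies in $(0,\frac{1}{3})$ since $\bar{\theta}_2\in(\beta-1,1)\subset(0,1)$. Under the substitution $v_n(t):=2^{\bar{\theta}_2 n}\hat{q}_n(t)$, the equation \eqref{hatqn1} reduces to the pure convection
\[
v_n'=c_n\bigl(v_{n-1}-v_n\bigr),\qquad c_n:=\frac{(1-\delta_1)\gamma(2^n)}{4},\qquad n>n_0,
\]
with boundary datum $v_{n_0}(t)=4\delta_0^{3/2}2^{\bar{\theta}_2 n_0}e^{-\nu t}$ and initial datum $v_n(0)=4\delta_0^{3/2}2^{\bar{\theta}_2 n}$; I would take $n_0$ large enough that $c_{n_0}\gg\nu$, so the convection dominates the boundary decay.

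For the lower bound, the constant-in-$n$ function $\underline{v}_n(t):=4\delta_0^{3/2}2^{\bar{\theta}_2 n_0}e^{-\nu t}$ is a subsolution: the right-hand side $c_n(\underline{v}_{n-1}-\underline{v}_n)$ vanishes while $\underline{v}_n'=-\nu\underline{v}_n\leq 0$. Since $\underline{v}_{n_0}(t)=v_{n_0}(t)$ and $\underline{v}_n(0)\leq v_n(0)$ for $n\geq n_0$, the comparison principle gives $v_n(t)\geq\underline{v}_n(t)$, which is the claimed lower bound with $c_1:=4\cdot 2^{\bar{\theta}_2 n_0}$.

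For the upper bound I would use the Duhamel formula
\[
v_n(t)=v_n(0)e^{-c_n t}+c_n\int_0^t e^{-c_n(t-s)}v_{n-1}(s)\,\de s
\]
iterated down to $n=n_0$, writing $v_n(t)$ as a superposition of contributions from the initial datum at each level $k\in\{n_0+1,\dots,n\}$ and from the boundary. The key self-similar observation is that the contribution from level $k$ has the form $\sim 2^{\bar{\theta}_2 k}e^{-c_k t}$, which as a function of $k$ has a sharp peak at $k^*(t)\sim-\beta^{-1}\log_2 t$ with value $\sim t^{-\bar{\theta}_2/\beta}$; the geometric growth $c_{k+1}/c_k=2^\beta$ ensures that only $O(1)$ values of $k$ near $k^*$ contribute significantly, so the cascade sum remains of order $t^{-\bar{\theta}_2/\beta}$ uniformly in $n$.

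The main obstacle will be to make this cascade estimate rigorous without accumulating factors of $n-n_0$. I would handle this by constructing an explicit supersolution of the form $\bar{v}_n(t)=C\delta_0^{3/2}\phi(2^{\beta n}t)e^{-\nu t}$, where $\phi:(0,\infty)\to(0,\infty)$ is a smooth decreasing profile satisfying $\phi(\xi)\lesssim\min(1,\xi^{-\bar{\theta}_2/\beta})$ and $C$ is sufficiently large; the supersolution inequality $\bar{v}_n'\geq c_n(\bar{v}_{n-1}-\bar{v}_n)$ then reduces to a first-order differential inequality for $\phi$, solvable explicitly thanks to the cascading structure. The comparison principle applied to $\bar v - v$, with initial and boundary data checked using the singularity of $\phi$ near $0$ and the smallness of $\nu$ compared to $c_{n_0}$, will then yield the desired bound with $c_2$ depending only on $\delta_1$ (hence $\bar\theta_2$), $n_0$, and the kernels.
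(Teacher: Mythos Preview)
Your approach is essentially the same as the paper's: the key step in both is the choice of $\delta_1$ satisfying $\frac{1+\delta_1}{1-\delta_1}=2^{1-\bar{\theta}_2}$, so that the substitution $v_n=2^{\bar{\theta}_2 n}\hat q_n$ (equivalently, the paper's $w_n=2^{(1-\sigma)n}\hat q_n$ with $1-\sigma=\bar{\theta}_2$ after a time rescaling) reduces \eqref{hatqn1} to the pure convection $v_n'=c_n(v_{n-1}-v_n)$.

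The execution differs in two minor respects. For the lower bound, your constant-in-$n$ subsolution $\underline v_n(t)=4\delta_0^{3/2}2^{\bar{\theta}_2 n_0}e^{-\nu t}$ is more elementary than the paper's route, which instead invokes the two-sided bound $c_1 e^{-\gamma(2^\ell)t/4}\le\Psi_n^{(\ell)}(t)\le c_2 e^{-\gamma(2^\ell)t/4}$ on the explicit fundamental solution computed in \cite[Lemma~A.3]{BNVd}. For the upper bound, the paper uses that same Duhamel representation and fundamental-solution estimate directly, summing $\sum_{\ell>n_0}2^{\bar{\theta}_2\ell}e^{-\gamma(2^\ell)t/4}\lesssim t^{-\bar{\theta}_2/\beta}$ (this is exactly the ``cascade sum with peak at $k^*\sim-\beta^{-1}\log_2 t$'' you describe); your proposed self-similar supersolution $\bar v_n(t)=C\delta_0^{3/2}\phi(2^{\beta n}t)e^{-\nu t}$ is a workable alternative, but you would still need to produce a concrete $\phi$ and verify the resulting inequality, which is not quite immediate since $\gamma(2^n)$ is only asymptotically $2^{\beta n}$. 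The paper's route is shorter here because it can simply cite the companion paper for the hard estimate on $\Psi_n^{(\ell)}$.
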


\begin{proof}
Let $\sigma>0$ satisfy $2^{-\sigma}=\frac{1-\delta_1}{1+\delta_1}$. With the change of variables $w_n(t):=2^{(1-\sigma)n}\hat{q}_n(\frac{t}{1-\delta_1})$, the equation \eqref{hatqn1} becomes
\begin{equation} \label{hatqn3}
\frac{\de w_n}{\de t} = \frac{\gamma(2^n)}{4}\bigl( w_{n-1}-w_n \bigr).
\end{equation}
We can then express $w_n$ in terms of the fundamental solution to \eqref{hatqn3}, which has been computed in \cite[Lemma~A.3]{BNVd}:
\begin{equation*}
w_n(t) = \frac{\gamma(2^{n_0+1})}{4} \int_0^t \Psi_n^{(n_0+1)}(t-s)w_{n_0}(s)\de s + \sum_{\ell=n_0+1}^n \Psi_n^{(\ell)}(t)w_\ell(0), \qquad n\geq n_0+1.
\end{equation*}
From the explicit expression of $\Psi$ provided by \cite[Lemma~A.3]{BNVd} one can obtain an estimate of the form
\begin{equation*}
c_1 e^{-\frac{\gamma(2^\ell)}{4}t} \leq \Psi_n^{(\ell)}(t) \leq c_2 e^{-\frac{\gamma(2^\ell)}{4}t} \qquad (n\geq\ell\geq n_0)
\end{equation*}
for $c_2>c_1>0$ depending only on the fragmentation kernel $\gamma$; then, combining the previous estimate with the assumptions on $w_n(0)$, $w_{n_0}(t)$, we find for all $n\geq n_0+1$
\begin{equation*}
\begin{split}
w_n(t)
& \leq C_{n_0}\delta_0^{3/2}\int_0^t e^{-\frac{\gamma(2^{n_0+1})}{4}(t-s)} e^{-\frac{\nu s}{1-\delta_1}}\de s + 4c_2\delta_0^{3/2}\sum_{\ell=n_0+1}^n 2^{(1-\sigma)\ell}e^{-\frac{\gamma(2^\ell)}{4}t} \\
& \leq C_{n_0}\delta_0^{3/2}e^{-\frac{\nu t}{1-\delta_1}} + C\delta_0^{3/2} t^{-\frac{1-\sigma}{\beta}}e^{-\frac{\gamma(2^{n_0+1})}{8} t}
\leq C_{n_0}\delta_0^{3/2} t^{-\frac{1-\sigma}{\beta}}e^{-\frac{\nu t}{1-\delta_1}}
\end{split}
\end{equation*}
(the sum can be bounded by arguing as in \cite[equation~(A.43)]{BNVd}), and similarly
\begin{equation*}
w_n(t) \geq C'_{n_0}\delta_0^{3/2} e^{-\frac{\nu t}{1-\delta_1}}.
\end{equation*}
Going back to the function $\hat{q}_n$ with the change of variables, and choosing $\delta_1>0$ such that $1-\sigma=\bar{\theta}_2$, we obtain the estimate in the statement.
\end{proof}

%%%%%%%%%%%%%%%%%%%%%%%%%%%%%%%%%%%%%%%%%%%%%%%%%%%%%%%%%%%%%%%%%%
%%%%%%%%%%%%%%%%%%%%%%%%%%%%%%%%%%%%%%%%%%%%%%%%%%%%%%%%%%%%%%%%%%

\subsection{Conclusion} \label{subsect:conclusion}
We are now in a position to conclude the proof of the main result of the paper, by passing to the limit in the truncation parameter $N\to\infty$.

\begin{proof}[Proof of Theorem~\ref{thm:stability}]
For every sufficiently large $N\in\N$ we constructed in Section~\ref{subsect:truncation} a weak solution corresponding to the truncated initial datum and the truncated kernels, see \eqref{truncation1} and \eqref{truncation2}. These solutions exist for all positive times, remain supported in small intervals around the integers \eqref{truncation4}, and the corresponding sequence of masses $m^N(t)$ can be represented as in \eqref{fp} for all $t>0$, for suitable functions $t\mapsto(y^N(t),A^N(t))$.
Moreover, the sequences of first and second moments $p^N(t)$, $q^N(t)$ obey the estimates \eqref{decaypn1}--\eqref{decayqn1} for $t\in(0,\infty)$.
All the constants in the estimates are in particular independent of $N$.

For every $r>0$ we have the uniform bound
\begin{equation} \label{concl2}
\begin{split}
\int_{\R} 2^{rx}g^N(x,t)\de x
& \leq 2^{r\delta_0}\sum_{n\in\Z} 2^{rn}m^N_n(t) \\
& \xupref{fp}{\leq} 2^{r\delta_0}\bigl(1+\|y^N(t)\|_1\bigr)\sum_{n\in\Z} 2^{rn}\bar{m}_n(A^N(t),p^N(t)) \leq C_r
\end{split}
\end{equation}
for $C_r$ independent of $N$, in view of the asymptotics \eqref{nearlystat4} of $\bar{m}_n$ and of \eqref{fp3}. This in particular implies that, for every fixed $t$, the sequence of measures $\{g^N(\cdot,t)\}_N$ is tight, and hence relatively compact with respect to narrow convergence. Moreover, the family $\{g^N\}_N$ is equicontinuous, in the sense that for every $0<s<t<T$ and $\vphi\in C_{\mathrm b}(\R)$ we have, by using the weak formulation of the equation and the assumptions on the kernels,
\begin{equation*}
\begin{split}
\bigg| & \int_{\R}\vphi(x)g^N(x,t)\de x - \int_{\R}\vphi(x)g^N(x,s)\de x\bigg| \\
& \leq \frac{\ln2}{2}\sum_{k\leq N-1} \int_s^t \int_{I_k}\int_{I_k} K_N(2^y,2^z)g^N(y,\tau)g^N(z,\tau)\bigg| \vphi\Bigl(\frac{\ln(2^y+2^z)}{\ln2}\Bigr) - \vphi(y) - \vphi(z) \bigg| \de y \de z \de \tau\\
& \qquad + \frac14\sum_{k\leq N-1} \int_s^t\int_{I_k} \gamma_N(2^{y+1}) g^N(y+1,\tau) \big| \vphi(y+1) - 2\vphi(y) \big| \de y\de \tau \\
& \leq C\|\vphi\|_\infty\int_s^t \biggl( \sum_{k\leq0}2^{-k}\bigl(m^N_k(\tau)\bigr)^2 + \sum_{k>0}2^{\alpha k}\bigl(m^N_k(\tau)\bigr)^2 + \sum_{k\leq0} m^N_k(\tau) + \sum_{k>0}2^{\beta k}m^N_k(\tau) \biggr) \de\tau \\
& \leq C\|\vphi\|_\infty |t-s|,
\end{split}
\end{equation*}
with $C$ independent of $N$. Hence by Ascoli-Arzel\`a Theorem we can find a subsequence $N_j\to\infty$ such that the measures $g^{N_j}(\cdot,t)$ narrowly converge to some limit measure $g(\cdot,t)$ for every $t>0$, in the sense
\begin{equation} \label{concl1}
\int_{\R}\vphi(x)g^{N_j}(x,t)\de x \to\int_{\R}\vphi(x)g(x,t)\de x \qquad\text{for every }\vphi\in C_{\mathrm b}(\R).
\end{equation}

Next, thanks to \eqref{concl1} we can pass to the limit in the weak formulation \eqref{weakg} of the equation and obtain that $g$ is a weak solution, in the sense of Definition~\ref{def:weakg}, with initial datum $g_0$. Moreover $\supp g(\cdot,t)\subset\bigcup_{n\in\Z}I_n$. By defining $m_n(t)$, $p_n(t)$, $q_n(t)$ as in \eqref{mn}, \eqref{pn}, \eqref{qn} respectively, the convergence \eqref{concl1} implies
\begin{equation} \label{concl3}
m^{N_j}_n(t)\to m_n(t),
\quad
p^{N_j}_n(t)\to p_n(t),
\quad
q^{N_j}_n(t)\to q_n(t)
\qquad\text{as $j\to\infty$, for every $n\in\Z$.}
\end{equation}
In particular the sequences $p(t)$, $q(t)$ obey the bounds \eqref{decaypn1}--\eqref{decayqn1} (which are uniform in $N$), and in turn there exists $\rho\in[-\delta_0,\delta_0]$ such that
\begin{equation} \label{concl4}
p_n(t)\to\rho, \qquad q_n(t)\to0 \qquad\text{as $t\to\infty$, for all $n\in\Z$.}
\end{equation}

We next show that also the limit sequence $m_n(t)$ can be represented in terms of the coefficients $\bar{m}_n$. Indeed, by \eqref{fp3} we have that (up to further subsequences) $y^{N_j}_n(t)\to y_n(t)$ and $A^{N_j}(t)\to A(t)$ as $j\to\infty$, for some limit functions $y_n(t)$, $A(t)$. We deduce that
\begin{equation} \label{concl6}
\begin{split}
m_n(t)
& = \lim_{j\to\infty} m^{N_j}_n(t)
\xupref{fp}{=} \lim_{j\to\infty} \bar{m}_n(A^{N_j}(t),p^{N_j}(t)) \bigl(1+2^ny_n^{N_j}(t)\bigr) \\
& = \bar{m}_n(A(t),p(t))(1+2^ny_n(t)).
\end{split}
\end{equation}

We eventually pass to the limit as $t\to\infty$. We extract a subsequence $t_j\to\infty$ such that $g(\cdot,t_j)\to \mu$ in the sense of measures as $j\to\infty$, for some limit measure $\mu$. As the sequence $y_n(t)$ satisfies the bound \eqref{fp3}, we have $2^ny_n(t_j)\to0$ as $j\to\infty$, and we can further assume that there exists the limit $A_\infty:=\lim_{j\to\infty}A(t_j)$. Hence by \eqref{concl6} we have for every $n\in\Z$
\begin{equation} \label{concl5}
\lim_{j\to\infty}m_n(t_j)=\lim_{t\to\infty}\bar{m}_n(A(t_j),p(t_j))(1+2^ny_n(t_j)) = a_n(A_{\infty},\rho) .
\end{equation}
We claim that $A_\infty=A_{M,\rho}$: indeed we have in view of the conservation of mass of the solution $g(x,t)$ and by a Taylor expansion
\begin{equation*}
M= \sum_{n=-\infty}^\infty\int_{I_n}2^xg(x,t_j)\de x = \sum_{n=-\infty}^\infty \biggl( 2^{n+p_n(t_j)}m_n(t_j) + O(2^nm_n(t_j)q_n(t_j)) \biggr),
\end{equation*}
so that by passing to the limit as $j\to\infty$ and using \eqref{concl4}, \eqref{concl5} we find
\begin{equation*}
M= \sum_{n=-\infty}^\infty 2^{n+\rho}a_n(A_\infty,\rho),
\end{equation*}
which implies $A_\infty=A_{M,\rho}$ (see \eqref{peak2}).
Thanks to \eqref{concl4} we have for every $n\in\Z$
\begin{equation*}
\int_{I_n}(x-n-\rho)^2\de\mu(x) = \lim_{j\to\infty}\int_{I_n}(x-n-p_n(t_j))^2g(x,t_j)\de x = \lim_{j\to\infty} m_n(t_j)q_n(t_j)=0,
\end{equation*}
therefore $\supp\mu\subset\bigcup_{n\in\Z}\{n+\rho\}$, and $\mu=\sum_{n\in\Z}b_n\delta_{n+\rho}$ for suitable coefficients $b_n$. Moreover by \eqref{concl5}
\begin{equation*}
b_n = \lim_{j\to\infty} m_n(t_j)=a_n(A_{M,\rho},\rho),
\end{equation*}
and we conclude that the limit measure $\mu$ coincides with the stationary solution $g_p(A_{M,\rho},\rho)$. Finally, by uniqueness of the limit we also have that the full family of measures $g(\cdot,t)$ converges to $g_p(A_{M,\rho},\rho)$ as $t\to\infty$.
\end{proof}

%%%%%%%%%%%%%%%%%%%%%%%%%%%%%%%%%%%%%%%%%%%%%%%%%%%%%%%%%%%%%%%%%%
%%%%%%%%%%%%%%%%%%%%%%%%%%%%%%%%%%%%%%%%%%%%%%%%%%%%%%%%%%%%%%%%%%
%%%%%%%%%%%%%%%%%%%%%%%%%%%%%%%%%%%%%%%%%%%%%%%%%%%%%%%%%%%%%%%%%%
%%%%%%%%%%%%%%%%%%%%%%%%%%%%%%%%%%%%%%%%%%%%%%%%%%%%%%%%%%%%%%%%%%
%%%%%%%%%%%%%%%%%%%%%%%%%%%%%%%%%%%%%%%%%%%%%%%%%%%%%%%%%%%%%%%%%%
%%%%%%%%%%%%%%%%%%%%%%%%%%%%%%%%%%%%%%%%%%%%%%%%%%%%%%%%%%%%%%%%%%
%%%%%%%%%%%%%%%%%%%%%%%%%%%%%%%%%%%%%%%%%%%%%%%%%%%%%%%%%%%%%%%%%%
%%%%%%%%%%%%%%%%%%%%%%%%%%%%%%%%%%%%%%%%%%%%%%%%%%%%%%%%%%%%%%%%%%
%%%%%%%%%%%%%%%%%%%%%%%%%%%%%%%%%%%%%%%%%%%%%%%%%%%%%%%%%%%%%%%%%%
%%%%%%%%%%%%%%%%%%%%%%%%%%%%%%%%%%%%%%%%%%%%%%%%%%%%%%%%%%%%%%%%%%

\appendix
\section{Proof of the regularity result for the linearized problem} \label{sect:appendix}

We provide in this section the proof of the regularity result for the linearized problem \eqref{slinear1}, Theorem~\ref{thm:slinear}. The proof follows the same strategy	as that of Theorem~\ref{thm:linear}, given in \cite[Appendix~A]{BNVd}, which can be seen as a particular case of Theorem~\ref{thm:slinear}.

\begin{proof}[Proof of Theorem~\ref{thm:slinear}]
Along the proof, we will denote by $C$ a generic constant, possibly depending on the properties of the kernels, on $M$, and on $\bar{\theta}_1$, which might change from line to line. The estimate in the statement will be proved for the exponent $\nu>0$ given by
\begin{equation} \label{sprooflinear0}
\nu = \frac{1}{4c_0},
\end{equation}
where $c_0$ is the constant given by Lemma~\ref{lem:poincare} below. We divide the proof into several steps.

\smallskip\noindent\textit{Step 1.}
A maximum principle argument as in the first step of the proof of \cite[Lemma~A.1]{BNVd} can be applied also in this case, with minor changes, and shows, for a given initial datum $y^0\in\mathcal{Y}_\theta$, the existence and uniqueness of a solution $t\mapsto y(t)$ with $y(0)=y^0$ in the space $\mathcal{Y}_0$ for $\theta\geq0$ and in the space $\mathcal{Y}_\theta$ if $\theta<0$, satisfying in addition
\begin{equation} \label{sprooflinear1}
\begin{split}
\|y(t)\|_{0} &\leq 2\|y^0\|_\theta e^{\mu t} \qquad\text{(if $\theta\geq0$),}\\
\|y(t)\|_{\theta} &\leq 2\|y^0\|_\theta e^{\mu t} \qquad\text{(if $\theta<0$),}
\end{split}
\end{equation}
for some $\mu>0$ (we omit the details here).

The maximum principle also yields a uniform estimate on $y_n(t)$ in the region $n\geq N$, for a fixed $N$ sufficiently large, in terms of the initial values $y^0$ and on the boundary values $y_N(t)$. In order to obtain such an estimate, we again distinguish between the two cases $\theta\geq0$ and $\theta<0$. In the first case ($\theta\geq0$), the initial datum $y^0_n$ is bounded as $n\to\infty$, and we can directly use a comparison principle with the constant $\sup_{n\geq N}|y^0_n| + \sup_{0\leq s\leq t}|y_N(s)|$ (since the constants are solutions to \eqref{slinear1}). In the other case ($\theta<0$), we can compare with the sequence $2^{-\theta n}$, which is a supersolution to \eqref{slinear1} in the region $n\geq N$ for $N$ large enough (exploiting the fact that $\sigma_n\to0$ as $n\to\infty$). In conclusion, we find for every $t>0$
\begin{equation} \label{sprooflinear1bis}
\begin{split}
\sup_{n\geq N} |y_n(t)| &\leq \sup_{n\geq N}|y^0_n| + \sup_{0\leq s\leq t}|y_N(s)| \leq \|y^0\|_\theta + \sup_{0\leq s\leq t}|y_N(s)| \qquad \text{(if $\theta\geq0$),}\\
\sup_{n\geq N} 2^{\theta n}|y_n(t)| &\leq \sup_{n\geq N}2^{\theta n}|y^0_n| + \sup_{0\leq s\leq t}|y_N(s)| \leq \|y^0\|_\theta + \sup_{0\leq s\leq t}|y_N(s)| \qquad\text{(if $\theta<0$).}
\end{split}
\end{equation}

\smallskip\noindent\textit{Step 2.}
We will now prove a uniform decay estimate in bounded regions $n\in[-n_0,n_0]$, for $n_0\in\N$ sufficiently large.
To this aim, we introduce the quantities
\begin{equation} \label{sprooflinear2}
\bar{m}(t) := \frac{\sum_{n=-\infty}^\infty 2^{2n}\bar{m}_n(A_M,p(t)) y_n(t)}{\sum_{n=-\infty}^\infty 2^{2n}\bar{m}_n(A_M,p(t))} \,,
\qquad
I(t) := \sum_{n=-\infty}^\infty 2^{2n}\bar{m}_n(A_M,p)(y_n-\bar{m})^2 \,,
\end{equation}
where the coefficients $\bar{m}_n$ are defined in Lemma~\ref{lem:nearlystat}. In view of the rough bound \eqref{sprooflinear1}, and of the decay \eqref{nearlystat4} of the sequence $\bar{m}_n$, we easily obtain a uniform estimate for small times:
\begin{equation} \label{sprooflinear201}
|\bar{m}(t)| \leq \overline{C} \|y^0\|_\theta,
\qquad
|I(t)| \leq \overline{C} \|y^0\|_\theta^2
\qquad\text{for all $t\leq1$,}
\end{equation}
for a uniform constant $\overline{C}$.
We now compute the evolution equations for $\bar{m}(t)$ and $I(t)$ and show that these quantities decay exponentially to 0, by a Gr\"onwall-type argument.
In view of \eqref{sprooflinear201} we can restrict to times $t\geq1$, so that we do not have to take into account the time singularity $t^{-\bar{\theta}_1/\beta}$ in \eqref{asspn}. We preliminary notice that, by using the estimates \eqref{nearlystat6} and \eqref{asspn}, we find
\begin{equation} \label{sprooflinear202}
\begin{split}
\bigg| \frac{\de}{\de t}\Bigl[\bar{m}_n(A_M,p(t))\Bigr] \bigg|
& \leq \sum_{k=n}^\infty \bigg| \frac{\partial\bar{m}_n}{\partial p_k}(A_M,p(t)) \frac{\de p_k}{\de t} \bigg| \\
& \leq C\eta_0 e^{-\frac{\nu}{2}t} \bar{m}_n(A_M,p(t)) \biggl( \sum_{k=n\wedge 1}^0 2^{n-k} +\sum_{k=n\vee 1}^\infty 2^{n-k}2^{(\beta-\bar{\theta}_1)k} \biggr) \\
& \leq C\eta_0 e^{-\frac{\nu}{2}t} \bar{m}_n(A_M,p(t)) \max\bigl\{ 1,2^{(\beta-\bar{\theta}_1)n} \bigr\} .
\end{split}
\end{equation}
By elementary computations using the equation \eqref{slinear1}, the definition \eqref{slinear2} of $\sigma_n(t)$, and the relation \eqref{nearlystat3bis}, one can check that
\begin{equation*}
\begin{split}
\frac{\de\bar{m}(t)}{\de t}
= \frac{1}{\sum_{n=-\infty}^\infty 2^{2n}\bar{m}_n(A_M,p(t))} {\sum_{n=-\infty}^\infty 2^{2n} \frac{\de}{\de t}\Bigl[\bar{m}_n(A_M,p(t))\Bigr] \bigl(y_n(t)-\bar{m}(t)\bigr) },
\end{split}
\end{equation*}
hence in view of \eqref{sprooflinear202}, for all $t\geq1$,
\begin{equation} \label{sprooflinear203}
\begin{split}
\bigg| \frac{\de\bar{m}(t)}{\de t} \bigg|
& \leq C\eta_0 e^{-\frac{\nu}{2} t} \biggl( \sum_{n=-\infty}^0 2^{2n}\bar{m}_n\big|y_n-\bar{m}\big| + \sum_{n=1}^\infty 2^{(\beta-\bar{\theta}_1)n}2^{2n}\bar{m}_n\big|y_n-\bar{m}\big| \biggr) \\
& \leq C\eta_0 e^{-\frac{\nu}{2} t} \sqrt{I(t)}
\end{split}
\end{equation}
(where the last passage follows by H\"older inequality).
In order to obtain an evolution equation for $I(t)$, we observe that, recalling the notation \eqref{linearD} for the discrete derivatives, the equation \eqref{slinear1} can be written in the form
\begin{equation*}
2^{2n}\bar{m}_n(A_M,p(t)) \frac{\de y_n}{\de t} = D^-_n\Bigl( \bigl\{ 2^{2k}\gamma(2^{k+1+p_{k+1}(t)})\bar{m}_{k+1}(A_M,p(t)) D_k^+(y(t)) \bigr\}_k \Bigr);
\end{equation*}
from this identity, by subtracting $\bar{m}(t)$, multiplying by $(y_n(t)-\bar{m}(t))$, and summing over $n$, a straightforward computation yields
\begin{multline*}
\frac{\de I(t)}{\de t}
= - 2\sum_{n=-\infty}^\infty 2^{2n} \gamma(2^{n+1+p_{n+1}})\bar{m}_{n+1}(D^+_n(y))^2
+ \sum_{n=-\infty}^\infty 2^{2n} \frac{\de}{\de t}\Bigl[\bar{m}_n(A_M,p(t))\Bigr] (y_n-\bar{m})^2.
\end{multline*}
Then, by using the Poincar\'e-type inequality in Lemma~\ref{lem:poincare}, \eqref{sprooflinear202}, and \eqref{sprooflinear1bis}, we find
\begin{align} \label{sprooflinear207}
\bigg| \frac{\de I(t)}{\de t} \bigg|
& \leq -\frac{2}{c_0}I(t) + C \eta_0 e^{-\frac{\nu}{2} t} \sum_{n=-\infty}^{\infty} 2^{2n}\max\{ 1,2^{(\beta-\bar{\theta}_1)n} \}\bar{m}_n(A_M,p)(y_n-\bar{m})^2 \nonumber \\
& \leq -\Bigl(\frac{2}{c_0} - C\eta_0 e^{-\frac{\nu}{2} t} \Bigr) I(t) + C\eta_0 e^{-\frac{\nu}{2} t} \sum_{n=N+1}^\infty 2^{(\beta-\bar\theta_1)n}2^{2n}\bar{m}_n(A_M,p)(y_n-\bar{m})^2 \nonumber\\
& \leq -\frac{1}{c_0}I(t) + C\eta_0 e^{-\frac{\nu}{2} t} \Bigl( |\bar{m}(t)|^2 + \|y^0\|_\theta^2 + \sup_{0\leq s\leq t}|y_N(s)|^2 \Bigr),
\end{align}
provided that we choose $\eta_0$ sufficiently small. Hence, recalling the choice \eqref{sprooflinear0},
\begin{equation} \label{sprooflinear204}
\bigg| \frac{\de I(t)}{\de t} \bigg|
\leq -4\nu I(t) + C\eta_0 e^{-\frac{\nu}{2} t} \Bigl( \|y^0\|_\theta^2 + \sup_{0\leq s\leq t}|\bar{m}(s)|^2 + \sup_{0\leq s\leq t}|I(s)| \Bigr).
\end{equation}
Setting 
\begin{equation*}
\bar{t} := \sup\Bigl\{ t\geq1 \,:\,  |\bar{m}(s)| \leq 2\overline{C} \|y^0\|_\theta, \, |I(s)| \leq 2\overline{C} \|y^0\|_\theta^2 \text{ for all $s\in[1,t]$} \Bigr\},
\end{equation*}
we find that $\bar{t}=\infty$ by a standard continuation argument, using \eqref{sprooflinear201} and the two estimates \eqref{sprooflinear203}--\eqref{sprooflinear204} (which hold for $t\geq1$), and choosing $\eta_0$ small enough; hence
\begin{equation} \label{sprooflinear205}
|\bar{m}(t)| \leq 2\overline{C} \|y^0\|_\theta,
\qquad
|I(t)| \leq 2\overline{C} \|y^0\|_\theta^2
\qquad\text{for all $t>0$.}
\end{equation}

By inserting \eqref{sprooflinear205} into \eqref{sprooflinear204} we find by Gr\"onwall's inequality
\begin{equation} \label{sprooflinear206}
|I(t)| \leq C \|y^0\|_\theta^2 e^{-\frac{\nu}{4} t}
\qquad\text{for all $t>0$,}
\end{equation}
which in turn implies that for every $n_0\in\N$ there exists a constant $C_{n_0}$, depending on $n_0$, on the kernels, and on the fixed parameter $M$, such that
\begin{equation} \label{sprooflinear3}
\sup_{-n_0\leq n \leq n_0}|y_n(t)-\bar{m}(t)| \leq C_{n_0} \|y^0\|_\theta e^{-\frac{\nu}{8}t} \qquad\text{for every }t>0.
\end{equation}

Moreover, by \eqref{sprooflinear203} we have $|\frac{\de\bar{m}}{\de t}|\leq C\eta_0 \|y^0\|_\theta e^{-\frac{\nu}{2}t}$ for all $t\geq1$, which yields the existence of the limit $\bar{m}_\infty:=\lim_{t\to\infty}\bar{m}(t)$ and the estimate
\begin{equation} \label{sprooflinear24}
\big| \bar{m}(t_2)-\bar{m}(t_1) \big| \leq C \|y^0\|_\theta e^{-\frac{\nu}{2} t_1}  \quad\text{for all $t_2\geq t_1 \geq 1$.}
\end{equation}
In the rest of the proof, $C_{n_0}$ always denotes a constant depending on $n_0$, on the kernels, and on the fixed parameter $M$, possibly changing from line to line.

\smallskip\noindent\textit{Step 3.}
We now want to extend the estimate \eqref{sprooflinear3} in the region $n\leq-n_0$. We first notice that, thanks to \eqref{sprooflinear1} and \eqref{sprooflinear205}, we have for all $n\leq -n_0$
\begin{equation} \label{sprooflinear25}
|y_n(t)-\bar{m}_\infty| \leq C2^{-n}\|y^0\|_\theta \qquad\text{for all $t\leq1$.}
\end{equation}
We next extend \eqref{sprooflinear25} for times $t>1$. This can be achieved by a maximum principle argument, similar to the third step in the proof of \cite[Lemma~A.1]{BNVd}.
For fixed $T>1$ and $\e>0$ we consider the sequence
\begin{equation*}
z_n(t) := N 2^{-n}e^{-\frac{\nu}{8} t} + \e 4^{-n},
\end{equation*}
where $N>0$ is a constant to be fixed later. We first observe that
\begin{equation*}
\frac{\de z_n}{\de t} - \mathscr{L}_n(z;t) = N 2^{-n}e^{-\frac{\nu}{8} t} \Bigl( -\frac{\nu}{8} - \frac{\gamma(2^{n+p_n(t)})}{4} \bigl(1-\sigma_n(t)/2\bigr) \Bigr) - \e\frac{\gamma(2^{n+p_n(t)})}{4^{n+1}}\Bigl(3-\frac34\sigma_n(t)\Bigr).
\end{equation*}
Recalling the asymptotics \eqref{kernel5bis} and \eqref{slinear8} as $n\to-\infty$ and taking $\eta_0$ sufficiently small, assuming without loss of generality that $\nu<2\gamma_0$, we obtain that $z_n$ is a supersolution for \eqref{slinear1} in the region $n\in(-\infty,-n_0]$, for every sufficiently large $n_0$. Furthermore, for $t=1$
\begin{equation*}
|y_n(1)-\bar{m}_\infty| \leq C2^{-n}\|y^0\|_{\theta} < N 2^{-n} \leq z_n(0) \qquad\text{for all }n\leq -n_0,
\end{equation*}
provided that we choose $N>C\|y^0\|_{\theta}$. By \eqref{sprooflinear3} and \eqref{sprooflinear24}, for $n=-n_0$ we have
\begin{equation*}
|y_{-n_0}(t)-\bar{m}_\infty| \leq (C_{n_0}+C)\|y^0\|_{\theta}e^{-\frac{\nu}{8}t} \leq z_{-n_0}(t) \qquad\text{for every }t\geq1,
\end{equation*}
if we choose $N>2^{-n_0}(C_{n_0}+C) \|y^0\|_{\theta}$.
Finally, by \eqref{sprooflinear1} and \eqref{sprooflinear205} we can choose $n_1>n_0$ sufficiently large, depending on $\e$ and $T$, such that
\begin{equation*}
|y_{-n_1}(t)-\bar{m}_\infty| \leq (2^{n_1+1}e^{\mu T}+2\overline{C})\|y^0\|_{\theta} \leq \e4^{n_1} \leq z_{-n_1}(t) \qquad\text{for every }t\in[1,T].
\end{equation*}
Therefore, the choice $N > \max\{ C, 2^{-n_0}(C_{n_0}+C)\}\|y^0\|_{\theta}$, we can apply the Maximum Principle in the compact region $(n,t)\in[-n_1,-n_0]\times[1,T]$:
\begin{equation*}
|y_n(t)-\bar{m}_\infty| \leq z_n(t) \qquad\text{for all $n\in[-n_1,-n_0]$ and $t\in[1,T]$.}
\end{equation*}
Letting firstly $n_1\to\infty$, and then $\e\to0$, $T\to\infty$, the previous argument shows that
\begin{equation} \label{sprooflinear20}
|y_n(t)-\bar{m}_\infty| \leq N 2^{-n}e^{-\frac{\nu}{8} t} \qquad\text{for all $n\leq -n_0$ and $t\geq1$.}
\end{equation}

By combining the estimates \eqref{sprooflinear3}, \eqref{sprooflinear24}, \eqref{sprooflinear25}, and \eqref{sprooflinear20}, we obtain that for every $n_0\in\N$ sufficiently large there exists a constant $C_{n_0}$ such that
\begin{equation} \label{sprooflinear21}
|y_n(t)-\bar{m}_\infty| \leq C_{n_0} 2^{-n} \|y^0\|_\theta e^{-\frac{\nu}{8} t} \qquad\text{for all $n\leq n_0$ and $t>0$.}
\end{equation}

\smallskip\noindent\textit{Step 4.}
We eventually investigate the behaviour of solutions to \eqref{slinear1} as $n\to\infty$. We therefore now restrict to the region $n\geq n_0$, where $n_0\in\N$ is a sufficiently large constant. In particular, we are allowed to use the asymptotics \eqref{kernel5}, and we will always assume without loss of generality that
\begin{equation} \label{sprooflinear30}
\gamma(2^n)<\gamma(2^{n+1}),
\qquad
\frac{1}{2} \, 2^{\beta(n-m)} \leq \frac{\gamma(2^n)}{\gamma(2^m)} \leq \frac32 \, 2^{\beta(n-m)}
\qquad
\text{for all $n,m\geq n_0$.}
\end{equation}
For $\ell\in\Z$, $\ell\geq n_0$, let $\Psi^{(\ell)}_n$ be the solution to the problem
\begin{equation} \label{linearfund1}
\begin{cases}
\frac{\de \Psi_n^{(\ell)}}{\de t} = \frac{2^{\beta n}}{4}\bigl( \Psi_{n-1}^{(\ell)} - \Psi_n^{(\ell)} \bigr) ,\\
\Psi_n^{(\ell)}(0) = \delta(n-\ell).
\end{cases}
\end{equation}
The functions $\Psi_n^{(\ell)}$ have been explicitly computed in \cite[Lemma~A.3]{BNVd}.
As a particular case of \cite[Lemma~A.3]{BNVd}, there exists a uniform constant $c>0$ such that
\begin{equation} \label{linearfund2}
\big| \Psi_n^{(\ell)}(t) - \Psi_{n+1}^{(\ell)}(t) \big| \leq c 2^{-\beta(n-\ell)}e^{-\frac{2^{\beta\ell}}{4} t} \qquad\text{for all $n\geq\ell\geq n_0$.}
\end{equation}
In particular, there exists the limit $\Psi^{(\ell)}_\infty(t):=\lim_{n\to\infty}\Psi^{(\ell)}_n(t)$, which satisfies
\begin{equation} \label{linearfund3}
\big| \Psi_n^{(\ell)}(t) - \Psi_{\infty}^{(\ell)}(t) \big| \leq c 2^{-\beta(n-\ell)}e^{-\frac{2^{\beta\ell}}{4} t} \qquad\text{for all $n\geq\ell\geq n_0$.}
\end{equation}
%and there exists also the limit
%\begin{equation} \label{linearfund4}
%D^\beta_\infty \Psi^{(\ell)}(t) := \lim_{n\to\infty} 2^{\beta n}\Bigl( \Psi_n^{(\ell)}(t) - \Psi_{\infty}^{(\ell)}(t) \Bigr),
%\quad\text{with }\;
%|D^\beta_\infty \Psi^{(\ell)}(t)| \leq c 2^{\beta\ell}e^{-\frac{2^{\beta\ell}}{4}t} \,.
%\end{equation}

By means of the fundamental solutions $\Psi_n^{(\ell)}$ we can write a representation formula for the solution to \eqref{slinear1} in the region $n\geq n_0$ in terms of the initial values $y_n^0$ and of the values of the solution for $n=n_0-1$. More precisely, we denote by $p_\infty(t):=\lim_{n\to\infty}p_n(t)$ (which exists by \eqref{asspn}), and we solve the initial/boundary value problem
\begin{equation} \label{sprooflinear31}
\begin{cases}
\frac{\de y_n}{\de t} = \frac{2^{\beta(n+p_\infty(t))}}{4} \bigl( y_{n-1}-y_n \bigr) + r_n(t) & n\geq n_0, \\
y_n(0) = y_n^0 & n\geq n_0,\\
y_{n_0-1}(t) = \lambda(t) & t>0,
\end{cases}
\end{equation}
where $r_n(t):=r_n^{(1)}(t) + r_n^{(2)}(t)$,
\begin{align*}
r_n^{(1)}(t) &:= \frac14\Bigl[ \gamma(2^{n+p_n(t)}) - 2^{\beta(n+p_\infty(t))} \Bigr] \bigl( y_{n-1}-y_n \bigr), \\
r_n^{(2)}(t) &:= -\frac{\gamma(2^{n+p_n(t)})}{4}\sigma_n(t)\bigl(y_n(t)-y_{n+1}(t)\bigr).
\end{align*}
By rescaling the time variable, that is by introducing $\tau:=\int_0^t 2^{\beta p_\infty(s)}\de s$ and $\tilde{y}_n(\tau):=y_n(t)$ (and defining similarly $\tilde{p}_n(\tau)$, $\tilde{\lambda}(\tau)$, $\tilde{r}_n(\tau)$), the problem \eqref{sprooflinear31} takes the form
\begin{equation} \label{sprooflinear32}
\begin{cases}
\frac{\de \tilde{y}_n}{\de\tau} = \frac{2^{\beta n}}{4} \bigl( \tilde{y}_{n-1}-\tilde{y}_n \bigr) + 2^{-\beta \tilde{p}_\infty(\tau)}\tilde{r}_n(\tau) & n\geq n_0, \\
\tilde{y}_n(0) = y_n^0 & n\geq n_0,\\
\tilde{y}_{n_0-1}(\tau) = \tilde\lambda(\tau) & \tau>0.
\end{cases}
\end{equation}
Notice that $c_1t\leq \tau \leq c_2t$ for positive constants $c_2>c_1>0$.
By Duhamel's Principle we can write the solution to \eqref{sprooflinear32}, for all $n\geq n_0$, as
\begin{equation} \label{sprooflinear33}
\begin{split}
\tilde{y}_n(\tau)
& = \frac{2^{\beta n_0}}{4}\int_0^\tau \Psi_n^{(n_0)}(\tau-s)\tilde\lambda(s)\de s + \sum_{\ell=n_0}^n \Psi_n^{(\ell)}(\tau)y_\ell^0 \\
& \qquad\qquad + \int_0^\tau \sum_{\ell=n_0}^n \Psi_n^{(\ell)}(\tau-s) 2^{-\beta \tilde{p}_\infty(s)}\tilde{r}_\ell(s)\de s \,.
\end{split}
\end{equation}
Using the representation formula \eqref{sprooflinear33}, together with the fact that $\frac{2^{\beta\ell}}{4}\int_0^\infty \Psi^{(\ell)}_n(s)\de s =1$ for every $n\geq\ell$ (see \cite[(A.26)]{BNVd}),  we can write the difference between $\tilde{y}_n(\tau)$ and $\tilde{y}_{n+1}(\tau)$, $n\geq n_0$, as follows:
\begin{align}\label{sprooflinear33b}
\tilde{y}_n(\tau) & - \tilde{y}_{n+1}(\tau)
= \frac{2^{\beta n_0}}{4} \int_0^\tau \bigl( \Psi_n^{(n_0)}-\Psi_{n+1}^{(n_0)} \bigr) (\tau-s) ( \tilde\lambda(s)-\bar{m}_\infty) \de s \nonumber \\
& - \bar{m}_\infty \frac{2^{\beta n_0}}{4} \int_\tau^\infty \bigl( \Psi_n^{(n_0)}-\Psi_{n+1}^{(n_0)} \bigr) (s)\de s
+ \sum_{\ell=n_0}^{n+1} \bigl( \Psi_n^{(\ell)}(\tau)-\Psi_{n+1}^{(\ell)}(\tau) \bigr) y_\ell^0 \nonumber \\
& + \int_0^\tau \sum_{\ell=n_0}^{n+1}  \bigl( \Psi_n^{(\ell)}-\Psi_{n+1}^{(\ell)} \bigr) (\tau-s) 2^{-\beta\tilde{p}_\infty(s)} \tilde{r}_\ell(s) \de s
=: I_1 + I_2 + I_3 + I_4.
\end{align}
We now estimate separately each term on the right-hand side of \eqref{sprooflinear33b}. For a fixed $\tilde{\theta}\in[\theta,\beta]$ as in the statement, we set $w(\tau):=\sup_{n\geq n_0} 2^{\tilde{\theta} n}|\tilde{y}_n(\tau)-\tilde{y}_{n+1}(\tau)|$. It is also convenient to introduce the constant $\Lambda:=\frac{2^{\beta n_0}}{8}$.

Notice that, by \eqref{sprooflinear21}, we have
\begin{equation*}
|\tilde\lambda(\tau)-\bar{m}_\infty| = |y_{n_0-1}(t)-\bar{m}_\infty| \leq C_{n_0}\|y^0\|_\theta e^{-\frac{\nu}{8} \tau} .
\end{equation*}
Combining this estimate with \eqref{linearfund2} we obtain
\begin{equation} \label{sprooflinear34a}
2^{\tilde{\theta}n} |I_1| \leq C_{n_0}\|y^0\|_\theta 2^{(\tilde\theta-\beta) n}\int_0^\tau e^{-\Lambda(\tau-s)}e^{-\frac{\nu}{8} s}\de s \leq C_{n_0}\|y^0\|_\theta e^{-\frac{\nu}{8}\tau}.
\end{equation}
For the second term $I_2$, using again \eqref{linearfund2} and \eqref{sprooflinear205} we have
\begin{equation} \label{sprooflinear34b}
2^{\tilde{\theta}n} |I_2| \leq C_{n_0}\|y^0\|_\theta 2^{(\tilde\theta-\beta) n} \int_\tau^\infty e^{-\Lambda s}\de s \leq C_{n_0}\|y^0\|_\theta e^{-\Lambda\tau}.
\end{equation}
The following estimate is proved in \cite[(A.44)]{BNVd}:
\begin{equation} \label{sprooflinear34c}
2^{\tilde{\theta}n} |I_3|
\leq C \|y^0\|_\theta 2^{(\tilde\theta-\beta) n} \sum_{\ell=n_0}^{n+1} 2^{(\beta-\theta)\ell}e^{-\frac{2^{\beta\ell}}{4}\tau}
\leq C\|y^0\|_\theta \bigl( 1+\tau^{-\frac{\tilde{\theta}-\theta}{\beta}}\bigr) e^{-\Lambda\tau}.
\end{equation}
To bound the term $I_4$ containing the remainder $\tilde{r}_\ell$, we need some preliminary estimates. In view \eqref{asspn} we have for all $\bar\theta\in(0,\bar{\theta}_1]$, by interpolation,
\begin{equation*}
|\tilde{p}_\ell(\tau)-\tilde{p}_\infty(\tau)|
\leq \sum_{j=\ell}^\infty |\tilde{p}_j(\tau)-\tilde{p}_{j+1}(\tau)| 
\leq \sum_{j=\ell}^\infty \Bigl( \eta_02^{-\bar{\theta}_1 j}\tau^{-\frac{\bar{\theta}_1}{\beta}} \Bigl)^{\frac{\bar{\theta}}{\bar{\theta}_1}} \eta_0^{1-\frac{\bar{\theta}}{\bar{\theta}_1}}
\leq C_{\bar{\theta}}\eta_02^{-\bar{\theta}\ell}\tau^{-\frac{\bar{\theta}}{\beta}}.
\end{equation*}
Then, using also \eqref{kernel5} and \eqref{slinear8}, 
\begin{align*}
2^{-\beta \tilde{p}_\infty(\tau)}|\tilde{r}_\ell^{(1)}(\tau)|
& \leq C \Bigl( |\gamma(2^{\ell+\tilde{p}_\ell(\tau)})-2^{\beta(\ell+\tilde{p}_\ell(\tau))}| + 2^{\beta\ell}|2^{\beta\tilde{p}_\ell(\tau)}-2^{\beta\tilde{p}_\infty(\tau)}| \Bigr) |\tilde{y}_{\ell-1}(\tau)-\tilde{y}_{\ell}(\tau)| \\
& \leq C \Bigl( 2^{\tilde{\beta}\ell} + 2^{\beta\ell}|\tilde{p}_\ell(\tau)-\tilde{p}_\infty(\tau)| \Bigr) |\tilde{y}_{\ell-1}(\tau)-\tilde{y}_{\ell}(\tau)| \\
& \leq C_{\bar{\theta}} \Bigl( 2^{\tilde{\beta}\ell} + \eta_0 2^{(\beta-\bar{\theta})\ell} \tau^{-\frac{\bar{\theta}}{\beta}} \Bigr) |\tilde{y}_{\ell-1}(\tau)-\tilde{y}_{\ell}(\tau)| , \\
2^{-\beta \tilde{p}_\infty(\tau)}|\tilde{r}_\ell^{(2)}(\tau)| &\leq C 2^{\beta\ell}e^{-A_M2^\ell} |\tilde{y}_\ell(\tau)-\tilde{y}_{\ell+1}(\tau)| .
\end{align*}
In turn we obtain, recalling \eqref{linearfund2},
\begin{align*}
2^{\tilde{\theta}n}|I_4|
& \leq C 2^{(\tilde{\theta}-\beta) n} \int_0^\tau \sum_{\ell=n_0}^{n+1} 2^{\beta\ell}e^{-\frac{2^{\beta\ell}}{4}(\tau-s)} \Bigl[ 2^{\tilde{\beta}\ell} + 2^{(\beta-\bar{\theta})\ell}s^{-\frac{\bar{\theta}}{\beta}} \Bigr] |\tilde{y}_{\ell-1}(s)-\tilde{y}_{\ell}(s)| \de s \nonumber \\
& \qquad + C 2^{(\tilde{\theta}-\beta) n} \int_0^\tau \sum_{\ell=n_0}^{n+1} 2^{2\beta\ell}e^{-\frac{2^{\beta\ell}}{4}(\tau-s)} e^{-A_M2^\ell} |\tilde{y}_\ell(s)-\tilde{y}_{\ell+1}(s)| \de s \nonumber \\
& \leq C_{n_0} 2^{(\tilde{\theta}-\beta) n} \int_0^\tau e^{-\Lambda(\tau-s)} \Bigl[ 1 + s^{-\frac{\bar{\theta}}{\beta}} \Bigr] |\tilde{y}_{n_0-1}(s)-\tilde{y}_{n_0}(s)| \de s \nonumber \\
& \qquad + C 2^{(\tilde{\theta}-\beta) n} \int_0^\tau \sum_{\ell=n_0+1}^{n+1} 2^{(\beta-\tilde{\theta})\ell}e^{-\frac{2^{\beta\ell}}{4}(\tau-s)} \Bigl[ 2^{\tilde{\beta}\ell} + 2^{(\beta-\bar{\theta})\ell}s^{-\frac{\bar{\theta}}{\beta}} \Bigr] w(s) \de s \nonumber \\
& \qquad + C 2^{(\tilde{\theta}-\beta) n} \int_0^\tau \biggl( \sum_{\ell=n_0}^{n+1} 2^{(2\beta-\tilde{\theta})\ell} e^{-A_M2^\ell} \biggr) e^{-\Lambda(\tau-s)}w(s) \de s \,. \end{align*}
By using \eqref{sprooflinear21} in the first term, and an estimate similar to \eqref{sprooflinear34c} in the second integral, we end up with
\begin{align} \label{sprooflinear34d}
2^{\tilde{\theta}n}|I_4|
& \leq C_{n_0} \|y^0\|_\theta \int_0^\tau e^{-\Lambda(\tau-s)} \Bigl[ 1 + s^{-\frac{\bar{\theta}}{\beta}}\Bigr] e^{-\frac{\nu}{8} s} \de s \nonumber \\
& \quad + C \int_0^\tau \sum_{\ell=n_0+1}^{n+1} e^{-\frac{2^{\beta\ell}}{4}(\tau-s)} \Bigl[ 2^{\tilde{\beta}\ell} + 2^{(\beta-\bar{\theta})\ell}s^{-\frac{\bar{\theta}}{\beta}} \Bigr] w(s) \de s
+ C \int_0^\tau e^{-\Lambda(\tau-s)}w(s) \de s \nonumber\\
& \leq C_{n_0} \|y^0\|_\theta e^{-\frac{\nu}{8}\tau} + C \int_0^\tau \bigl( 1+ (\tau-s)^{-\frac{\tilde{\beta}}{\beta}} \bigr) e^{-\Lambda(\tau-s)}w(s)\de s \nonumber \\
& \quad + C \int_0^\tau \bigl(1+(\tau-s)^{-\frac{\beta-\bar{\theta}}{\beta}} \bigr) s^{-\frac{\bar{\theta}}{\beta}} e^{-\Lambda(\tau-s)}w(s)\de s
+ C \int_0^\tau e^{-\Lambda(\tau-s)}w(s) \de s .
\end{align}

By inserting \eqref{sprooflinear34a}, \eqref{sprooflinear34b}, \eqref{sprooflinear34c}, \eqref{sprooflinear34d} into \eqref{sprooflinear33b} we find
\begin{multline} \label{sprooflinear35}
w(\tau) \leq C_{n_0}\|y^0\|_\theta \biggl( e^{-\frac{\nu}{8}\tau} + e^{-\Lambda\tau} + \tau^{-\frac{\tilde{\theta}-\theta}{\beta}} e^{-\Lambda\tau} \biggr)
+ C \int_0^\tau \bigl( 1+ (\tau-s)^{-\frac{\tilde{\beta}}{\beta}} \bigr) e^{-\Lambda(\tau-s)}w(s)\de s \\
+ C \int_0^\tau \bigl( 1+ (\tau-s)^{-\frac{\beta-\bar{\theta}}{\beta}} \bigr) s^{-\frac{\bar{\theta}}{\beta}} e^{-\Lambda(\tau-s)} w(s)\de s
+ C \int_0^\tau e^{-\Lambda(\tau-s)}w(s) \de s .
\end{multline}
Thanks to this estimate, the exponential-in-time decay of $w(t)$ can be obtained by means of a Gr\"onwall-type argument. We first consider small times $0<\tau<1$: in this case \eqref{sprooflinear35} becomes
\begin{equation*}
w(\tau) \leq C_{n_0} \|y^0\|_\theta \tau^{-\frac{\tilde{\theta}-\theta}{\beta}} + C \int_0^\tau \Bigl( 1 +  (\tau-s)^{-\frac{\tilde{\beta}}{\beta}} + s^{-\frac{\bar{\theta}}{\beta}} + (\tau-s)^{-\frac{\beta-\bar{\theta}}{\beta}}s^{-\frac{\bar{\theta}}{\beta}} \Bigr)w(s)\de s \,,
\end{equation*}
hence
\begin{equation} \label{sprooflinear36}
w(\tau) \leq C_{n_0}\|y^0\|_\theta \tau^{-\frac{\tilde{\theta}-\theta}{\beta}} \qquad\text{for all $0<\tau<1$.}
\end{equation}
For $\tau\geq1$, we set $\Psi(\tau):=\sup_{1\leq s\leq \tau}e^{\frac{\nu}{8}s}w(s)$. Choosing $\bar{\theta}\in(0,\beta)$ such that $\frac{\tilde{\theta}-\theta+\bar{\theta}}{\beta}<1$, we obtain from \eqref{sprooflinear35}
\begin{equation*}
\begin{split}
\Psi(\tau)
&\leq C_{n_0}\|y^0\|_\theta \\
& \qquad + C_{n_0}\|y^0\|_\theta\sup_{1\leq s\leq\tau} e^{\frac{\nu}{8} s}\int_0^1\Bigl[ 1+(s-r)^{-\frac{\tilde{\beta}}{\beta}} + r^{-\frac{\bar{\theta}}{\beta}}  + (s-r)^{-\frac{\beta-\bar{\theta}}{\beta}} r^{-\frac{\bar{\theta}}{\beta}} \Bigr] e^{-\Lambda(s-r)} r^{-\frac{\tilde{\theta}-\theta}{\beta}}\de r \\
& \qquad + C \Psi(\tau) \sup_{1\leq s \leq\tau} \int_1^s \Bigl[ 1+(s-r)^{-\frac{\tilde{\beta}}{\beta}} + (s-r)^{-\frac{\beta-\bar{\theta}}{\beta}} \Bigr] e^{-(\Lambda-\nu/8)(s-r)}\de r \\
& \leq C_{n_0}\|y^0\|_\theta + C\Psi(\tau) \int_0^\infty \Bigl[ 1+ t^{-\frac{\tilde{\beta}}{\beta}} + t^{-\frac{\beta-\bar{\theta}}{\beta}} \Bigr] e^{-(\Lambda-\nu/8)t}\de t \\
& \leq C_{n_0}\|y^0\|_\theta + C\Bigl( \frac{1}{\Lambda-\nu/8} + \frac{\Gamma(1-\frac{\tilde{\beta}}{\beta})}{(\Lambda-\nu/8)^{1-\tilde{\beta}/\beta}} + \frac{\Gamma(\frac{\bar{\theta}}{\beta})}{(\Lambda-\nu/8)^{\bar{\theta}/\beta}} \Bigr) \Psi(\tau) .
\end{split}
\end{equation*}
Recalling that $\Lambda=\frac{2^{\beta n_0}}{8}$, by choosing $n_0$ sufficiently large we obtain $\Psi(\tau)\leq C_{n_0}\|y^0\|_\theta$ and, in turn, $w(t)\leq C_{n_0}\|y^0\|_\theta e^{-\frac{\nu}{8}\tau}$ for all $\tau\geq1$. Combining this estimate with \eqref{sprooflinear36} we conclude
\begin{equation*}
w(\tau) \leq C_{n_0}\|y^0\|_\theta \bigl( 1+\tau^{-\frac{\tilde{\theta}-\theta}{\beta}} \bigr) e^{-\frac{\nu}{8}\tau} \qquad\text{for all $\tau>0$.}
\end{equation*}
By rescaling the time and going back to the original variables, and bearing in mind the estimate \eqref{sprooflinear21},
\begin{equation}  \label{sprooflinear37}
\| y_n(t)-y_{n+1}(t)\|_{\tilde{\theta}} \leq C_{n_0} \|y^0\|_\theta \bigl( 1+ t^{-\frac{\tilde{\theta}-\theta}{\beta}}\bigr) e^{-\frac{\nu}{8} t}.
\end{equation}

\smallskip\noindent\textit{Step 5.}
The last step of the proof consists in improving the exponent of the exponential in \eqref{sprooflinear37}, in order to get the optimal decay. In order to do this, we go back to Step 2 and we observe that, thanks to \eqref{sprooflinear3} and \eqref{sprooflinear37}, for all $t\geq1$ and $n\geq n_0$
\begin{equation*}
|y_n(t)-\bar{m}(t)| \leq |y_{n_0}(t)-\bar{m}(t)| + \sum_{j=n_0}^{n-1}|y_j(t)-y_{j+1}(t)| \leq C_{n_0}\|y^0\|_\theta e^{-\frac{\nu}{8}t}.
\end{equation*}
Then inserting this estimate into \eqref{sprooflinear207}
\begin{align*}
\bigg| \frac{\de I(t)}{\de t} \bigg|
& \leq -4\nu I(t) + C\eta_0 e^{-\frac{\nu}{2} t} \sum_{n=n_0+1}^\infty 2^{(\beta-\bar\theta_1)n}2^{2n}\bar{m}_n(A_M,p)(y_n-\bar{m})^2 \nonumber\\
& \leq -4\nu I(t) + C_{n_0}\eta_0\|y^0\|_\theta^2 e^{-\frac{\nu}{4}t} e^{-\frac{\nu}{2} t},
\end{align*}
so that Gr\"onwall inequality yields $|I(t)|\leq C\|y^0\|_\theta^2e^{-\frac{\nu}{2}t}$. Hence we have improved the exponent in the estimate \eqref{sprooflinear206}, and repeating the steps 3 and 4 we obtain that \eqref{sprooflinear37} holds with $\frac{\nu}{4}$ in place of $\frac{\nu}{8}$. By iterating this argument, we eventually obtain the desired decay
\begin{equation}  \label{sprooflinear38}
\| y_n(t)-y_{n+1}(t)\|_{\tilde{\theta}} \leq C_{n_0} \|y^0\|_\theta t^{-\frac{\tilde{\theta}-\theta}{\beta}}e^{-\nu t},
\end{equation}
that is \eqref{slinear6b} This also shows the existence of the limit $y_\infty(t):=\lim_{n\to\infty}y_n(t)$ and the estimate \eqref{slinear6}.
\end{proof}

The following discrete Poincar\'e-type inequality is used in the first step of the proof of Theorem~\ref{thm:slinear}.

\begin{lemma}\label{lem:poincare}
With the notation introduced in the proof of Theorem~\ref{thm:slinear}, there exists a constant $c_0>0$ (depending on $M$) such that for every $t>0$
\begin{equation*}
\sum_{n=-\infty}^\infty 2^{2n}\bar{m}_n(A_M,p) (y_n-\bar{m})^2 \leq c_0 \sum_{n=-\infty}^\infty 2^{2n} \gamma(2^{n+1+p_{n+1}})\bar{m}_{n+1}(A_M,p)(D^+_n(y))^2 \,.
\end{equation*}
\end{lemma}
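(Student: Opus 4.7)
The inequality is a weighted Poincaré inequality on $\mathbb{Z}$, with mass measure $\mu_n := 2^{2n}\bar{m}_n(A_M,p)$ on vertices and conductance $w_n := 2^{2n}\gamma(2^{n+1+p_{n+1}})\bar{m}_{n+1}(A_M,p)$ on edges. Since, by the definition of $\bar{m}$ in \eqref{sprooflinear2}, $\bar{m}(t)$ is precisely the $\mu$-weighted mean of $y$, it minimizes the functional $c \mapsto \sum_n \mu_n(y_n-c)^2$; hence it suffices to prove the stronger one-point inequality
$$\sum_n \mu_n (y_n - y_0)^2 \leq c_0 \sum_n w_n (D_n^+ y)^2,$$
and for this I would split the sum at $n=0$ and establish two one-sided Hardy-type inequalities, one on $n\geq 1$ and one on $n\leq -1$.

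On each half-line I would use the telescoping identity $y_n - y_0 = \pm\sum_k D_k^+ y$ together with a weighted Cauchy–Schwarz with auxiliary weights $\phi_k>0$, yielding $(y_n-y_0)^2 \leq \Phi_n \sum_k (D_k^+ y)^2/\phi_k$ where $\Phi_n$ is the sum of $\phi_k$ over the indices between $0$ and $n$. After exchanging the order of summation, the claim reduces to Muckenhoupt-type pointwise bounds of the form $\tfrac{1}{\phi_k w_k}\sum_{n}\mu_n\Phi_n \leq c_0$, with the sum taken over the appropriate tail. The relevant asymptotics come from \eqref{nearlystat4} together with \eqref{kernel5}--\eqref{kernel5bis}: one finds $\mu_n, w_n = O(2^{3n})$ as $n\to-\infty$ and $\mu_n, w_n = O(2^{Cn}e^{-A_M 2^n})$ (times explicit polynomial factors) as $n\to +\infty$, with constants uniform in $p$ because $|p_n|\leq\eta_0$.

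On the right tail the super-exponential decay makes the Muckenhoupt bound easy: choosing $\phi_k=1/w_k$ produces opposite factors $e^{\pm A_M 2^k}$ that cancel, leaving a convergent geometric series controlled by $2^{-\beta k}$. The delicate case is the left tail, where $\mu_n$ and $w_n$ decay at the same rate $2^{3n}$; here the naive $\phi_k=1/w_k$ produces a logarithmically divergent Muckenhoupt constant, and I would instead take $\phi_k = 1/\sqrt{w_k}$ (a geometric interpolation between $\mu$ and $w$), which makes $\mu_n\Phi_n\sim 2^{3n/2}$ summable at $-\infty$ and restores the uniform bound $\tfrac{1}{\phi_k w_k}\sum_{n\leq k}\mu_n\Phi_n=O(1)$. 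Identifying this interpolated weight on the left tail is the main technical obstacle; the uniformity of $c_0$ in $t$ is then automatic, since all the asymptotic constants depend only on $A_M$ (and hence on $M$) and on the fixed kernels.
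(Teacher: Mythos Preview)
Your approach is correct. The paper itself does not prove the lemma directly: its ``proof'' is a single sentence deferring to \cite[Lemma~A.2]{BNVd}, remarking that the argument there carries over with minor changes (the only change being that the weights $\bar m_n$ now depend on the sequence $p(t)$ rather than on a fixed $\rho$, which is harmless since $|p_n|\leq\eta_0$ keeps all asymptotic constants uniform). So a direct comparison of methods is not possible from this paper alone; your Hardy/Muckenhoupt route is a standard and valid way to establish such a weighted Poincar\'e inequality.

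Two small remarks. First, on the left tail with $\phi_k=1/w_k$ the failing Muckenhoupt sum $\sum_{n\leq k}\mu_n\Phi_n$ actually grows like $|k|$ (linearly), not logarithmically, since $\mu_n\Phi_n\sim 2^{3n}\cdot 2^{-3n}=O(1)$; your corrected choice $\phi_k=1/\sqrt{w_k}$ fixes this regardless. Second, one could bypass the auxiliary weights on the left tail by invoking the sharp Muckenhoupt--Hardy criterion $\sup_{k\leq -1}\bigl(\sum_{n\leq k}\mu_n\bigr)\bigl(\sum_{j=k}^{-1}1/w_j\bigr)<\infty$, which holds here since the two factors behave like $2^{3k}$ and $2^{-3k}$; but your direct Cauchy--Schwarz argument with the interpolated weight is self-contained and equally clean.
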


\begin{proof}
The proof can be obtained by adapting the corresponding result in \cite[Lemma~A.2]{BNVd}, with minor changes.
\end{proof}

%%%%%%%%%%%%%%%%%%%%%%%%%%%%%%%%%%%%%%%%%%%%%%%%%%%%%%%%%%%%%%%%%%
%%%%%%%%%%%%%%%%%%%%%%%%%%%%%%%%%%%%%%%%%%%%%%%%%%%%%%%%%%%%%%%%%%
%%%%%%%%%%%%%%%%%%%%%%%%%%%%%%%%%%%%%%%%%%%%%%%%%%%%%%%%%%%%%%%%%%
%%%%%%%%%%%%%%%%%%%%%%%%%%%%%%%%%%%%%%%%%%%%%%%%%%%%%%%%%%%%%%%%%%
%%%%%%%%%%%%%%%%%%%%%%%%%%%%%%%%%%%%%%%%%%%%%%%%%%%%%%%%%%%%%%%%%%
%%%%%%%%%%%%%%%%%%%%%%%%%%%%%%%%%%%%%%%%%%%%%%%%%%%%%%%%%%%%%%%%%%
%%%%%%%%%%%%%%%%%%%%%%%%%%%%%%%%%%%%%%%%%%%%%%%%%%%%%%%%%%%%%%%%%%
%%%%%%%%%%%%%%%%%%%%%%%%%%%%%%%%%%%%%%%%%%%%%%%%%%%%%%%%%%%%%%%%%%
%%%%%%%%%%%%%%%%%%%%%%%%%%%%%%%%%%%%%%%%%%%%%%%%%%%%%%%%%%%%%%%%%%
%%%%%%%%%%%%%%%%%%%%%%%%%%%%%%%%%%%%%%%%%%%%%%%%%%%%%%%%%%%%%%%%%%

\bigskip
\bigskip
\noindent
{\bf Acknowledgments.}
The authors acknowledge support through the CRC 1060 \textit{The mathematics of emergent effects} at the University of Bonn that is funded through the German Science Foundation (DFG).

\bibliographystyle{siam}
\bibliography{Bibliography}

\begin{thebibliography}{1}

\bibitem{BLL19a}
{\sc J.~Banasiak, W.~Lamb, and P.~Laurencot}, {\em Analytic Methods for
  Coagulation-Fragmentation Models, Volume I}, Chapman and Hall/CRC Mongraphs
  and Research Notes in Mathematics, CRC Press, 2019.

\bibitem{BLL19b}
\leavevmode\vrule height 2pt depth -1.6pt width 23pt, {\em Analytic Methods for
  Coagulation-Fragmentation Models, Volume II}, Chapman and Hall/CRC Mongraphs
  and Research Notes in Mathematics, CRC Press, 2019.

\bibitem{BNVd}
{\sc M.~Bonacini, B.~Niethammer, and J.~J.~L. Vel{\'a}zquez}, {\em Solutions
  with peaks for a coagulation-fragmentation equation. {P}art {I}: stability of
  the tails}, preprint,  (2019).

\bibitem{Can07}
{\sc J.~A. Ca\~{n}izo}, {\em Convergence to equilibrium for the discrete
  coagulation-fragmentation equations with detailed balance}, J. Stat. Phys.,
  129 (2007), pp.~1--26.

\bibitem{HNV16}
{\sc M.~Herrmann, B.~Niethammer, and J.~Vel\'azquez}, {\em Instabilites and
  oscillations in coagulation equations with kernels of homogeneity one},
  Quarterly Appl. Math., LXXV, 1 (2017), pp.~105--130.

\bibitem{LM03}
{\sc P.~Lauren\c{c}ot and S.~Mischler}, {\em Convergence to equilibrium for the
  continuous coagulation-fragmentation equation}, Bull. Sci. Math., 127 (2003),
  pp.~179--190.

\end{thebibliography}

\end{document}